\theoremstyle {definition} \newtheorem {defi} {Définition} [section] 
\theoremstyle {plain}  \newtheorem {thm} [defi] {Theorem}
\theoremstyle {plain}  \newtheorem {cor} [defi]{Corollary}
\theoremstyle {plain} \newtheorem {prop} [defi]{Proposition}
\theoremstyle {plain} \newtheorem {lem}[defi] {Lemma}
\theoremstyle{remark}
\newtheorem{rem}{Remark}
\newcommand{\oo}{\omega}
\newcommand{\oox}{|\omega(x)|_x}
\newcommand{\ddd}{\overrightarrow{\Delta}}
\newcommand{\tm}{\Lambda^1T^*M}
\title{\textbf{Riesz transforms of the Hodge-de Rham Laplacian on Riemannian manifolds}}
\author{Jocelyn Magniez}
\date{}
\begin{document}

\maketitle

\noindent Let $M$ be a complete non-compact Riemannian manifold satisfying the doubling volume property. Let $\ddd$ be the Hodge-de Rham Laplacian acting on $1$-differential forms. According to the Bochner formula, $\ddd=\nabla^*\nabla+R_+-R_-$ where $R_+$ and $R_-$ are respectively the positive and negative part of the Ricci curvature and $\nabla$ is the Levi-Civita connection. We study the boundedness of the Riesz transform $d^*(\ddd)^{-\frac{1}{2}}$ from $L^p(\tm)$ to $L^p(M)$ and of the Riesz transform $d(\ddd)^{-\frac{1}{2}}$ from $L^p(\tm)$ to $L^p(\Lambda^2T^*M)$. We prove that, if the heat kernel on functions $p_t(x,y)$ satisfies a Gaussian upper bound and if the negative part $R_-$ of the Ricci curvature is $\epsilon$-sub-critical for some $\epsilon\in[0,1)$, then $d^*(\ddd)^{-\frac{1}{2}}$ is bounded from $L^p(\tm)$ to $L^p(M)$ and $d(\ddd)^{-\frac{1}{2}}$ is bounded from $L^p(\tm)$ to $L^p(\Lambda^2T^* M)$ for $p\in(p_0',2]$ where $p_0>2$ depends on $\epsilon$ and on a constant appearing in the doubling volume property. A duality argument gives the boundedness of the Riesz transform $d(\Delta)^{-\frac{1}{2}}$ from $L^p(M)$ to $L^p(\tm)$ for $p\in [2,p_0)$ where $\Delta$ is the non-negative Laplace-Beltrami operator. We also give a condition on $R_-$ to be $\epsilon$-sub-critical under both analytic and geometric assumptions.

\section{Introduction and main results}\label{section1}

\noindent Let $(M,g)$ be a complete non-compact Riemannian manifold of dimension $N$, where $g$ denotes a Riemannian metric on $M$ ; that is, $g$ is a family of smoothly varying positive definite inner products $g_x$ on the tangent space $T_xM$ for each $x\in M$. Let $\rho$ and $\mu$ be the Riemannian distance and measure associated with $g$ respectively. We suppose that $M$ satisfies the doubling volume property, that is, there exists constants $C,D>0$ such that 
\begin{equation}\tag{D}\label{D}
v(x,\lambda r)\le C\lambda^{D}v(x,r),\;\forall x\in M,\forall r\ge 0, \forall\lambda\ge 1,
\end{equation}

\noindent where $v(x,r)=\mu(B(x,r))$ denotes the volume of the ball $B(x,r)$ of center $x$ and radius $r$. We also say that $M$ is of homogeneous type. This property is equivalent to the existence of a constant $C>0$ such that

\begin{equation*}
v(x,2 r)\le Cv(x,r),\;\forall x\in M,\forall r\ge 0.
\end{equation*}

\medskip

\noindent Let $\Delta$ be the non-negative Laplace-Beltrami operator and let $p_t(x,y)$ be the heat kernel of $M$, that is, the kernel of the semigroup $(e^{-t\Delta})_{t\ge 0}$ acting on $L^2(M)$. We say that the heat kernel $p_t(x,y)$ satisfies a Gaussian upper bound if there exist constants $c,C>0$ such that  
\begin{equation}\tag{G}\label{G}
p_t(x,y)\le\frac{C}{v(x,\sqrt{t})}\exp(-c\frac{\rho^2(x,y)}{t}), \forall t>0, \forall x,y\in M.
\end{equation}

\noindent Let $d(\Delta)^{-\frac{1}{2}}$ be the Riesz transform of the operator $\Delta$ where $d$ denotes the exterior derivative on $M$. Since we have by integration by parts 
\begin{equation*}
\|d\,f\|_2=\|\Delta^{\frac{1}{2}}f\|_2,\forall f\in\mathcal{C}^{\infty}_0(M),
\end{equation*}
\noindent the Riesz transform $d(\Delta)^{-\frac{1}{2}}$ extends to a bounded operator from $L^2(M)$ to $L^2(\tm)$, where $\tm$ denotes the space of $1$-forms on $M$. An interesting question is whether $d(\Delta)^{-\frac{1}{2}}$ can be extended to a bounded operator from $L^p(M)$ to $L^p(\tm)$ for $p\neq 2$. This problem has attracted attention in recent years. We recall some known results.

\medskip

\noindent It was proved by Coulhon and Duong \cite{could} that under the assumptions $(\ref{D})$ and $(\ref{G})$, the Riesz transform $d(\Delta)^{-\frac{1}{2}}$ is of weak-type $(1,1)$ and then bounded from $L^p(M)$ to $L^p(\tm)$ for all $p\in(1,2]$. In addition, they gave an example of a complete non-compact Riemannian manifold satisfying $(\ref{D})$ and $(\ref{G})$ for which $d(\Delta)^{-\frac{1}{2}}$ is unbounded from $L^p(M)$ to $L^p(\tm)$ for $p>2$. This manifold consists into two copies of $\mathbb{R}^2$ glued together around the unit circle. See also the article of Carron, Coulhon and Hassell \cite{carron} for further results on manifolds with Euclidean ends or the article of Guillarmou and Hassell \cite{GH} for complete non-compact and asymptotically conic Riemannian manifolds.

\medskip

\noindent The counter-example in \cite{could} shows that additional assumptions are needed to treat the case $p>2$. In 2003, Coulhon and Duong  \cite{coulhonduong} proved that if the manifold $M$ satisfies $(\ref{D})$, $(\ref{G})$ and the heat kernel $\overrightarrow{p_t}(x,y)$ associated with the Hodge-de Rham Laplacian $\ddd$ acting on $1$-forms satisfies a Gaussian upper bound, then the Riesz transform $d(\Delta)^{-\frac{1}{2}}$ is bounded from $L^p(M)$ to $L^p(\tm)$ for all $p\in(1,\infty)$. The proof is based on duality arguments and on the following estimate of the gradient of the heat kernel of $M$ 
\begin{equation*}
|\nabla_x p_t(x,y)|\le \frac{C}{\sqrt{t}\,v(x,\sqrt{t})}e^{-c\frac{\rho^2(x,y)}{t}}, \forall x,y\in M, \forall t>0,
\end{equation*}

\noindent which is a consequence of the relative Faber-Krahn inequalities satisfied by $M$ and the Gaussian estimates satisfied by $e^{-t\ddd}$.

\medskip

\noindent In 1987, Bakry \cite{bakry} proved that if the Ricci curvature is non-negative on $M$, then the Riesz transform $d(\Delta)^{-\frac{1}{2}}$ is bounded from $L^p(M)$ to $L^p(\tm)$ for all $p\in(1,\infty)$. The proof uses probabilistic techniques and the domination 
 \begin{equation*}
 |e^{-t\ddd}\oo|\le e^{-t\Delta}|\oo|, \forall t>0, \forall\oo\in\mathcal{C}_0^{\infty}(\tm).
 \end{equation*}
 
\noindent In this particular setting, $(\ref{G})$ is satisfied, and hence the heat kernel $\overrightarrow{p_t}(x,y)$ satisfies a Gaussian upper bound too. Thus the result of Bakry can be recovered using the arguments of Coulhon and Duong \cite{coulhonduong}. Note that the result of Bakry does not contredict the counter-example of Coulhon and Duong since the gluing of two copies of $\mathbb{R}^2$ creates some negative curvature.

\medskip

\noindent In 2004, Sikora \cite{sikora} improved the previous result of Coulhon and Duong showing that if the manifold $M$ satisfies $(\ref{D})$ and the estimate 
\begin{equation*}
\|\overrightarrow{p_t}(x,.)\|_{L^2}^2\le \frac{c}{v(x,\sqrt{t})}, \forall t>0, \forall x\in M,
\end{equation*} 
\noindent then the Riesz transform $d(\Delta)^{-\frac{1}{2}}$ is bounded from $L^p(M)$ to $L^p(\tm)$ for all $p\in[2,\infty)$. The proof is based on the method of the wave equation.

\medskip

\noindent Auscher, Coulhon, Duong and Hofmann \cite{acdh} characterized the boundedness of the Riesz transform $d(\Delta)^{-\frac{1}{2}}$ from $L^p(M)$ to $L^p(\tm)$ for $p>2$ in terms of $L^p-L^p$ estimates of the gradient of the heat semigroup when the Riemannian manifold $M$ satisfies Li-Yau estimates. More precisely, they proved that if $p_t(x,y)$ satisfies both Gaussian upper and lower bounds, then $d(\Delta)^{-\frac{1}{2}}$ is bounded from $L^p(M)$ to $L^p(\tm)$ for $p\in[2,p_0)$ if and only if $\|d\, e^{-t\Delta}\|_{p-p}\le \frac{C}{\sqrt{t}}$ for $p$ in the same interval. 

\medskip

\noindent Inspired by \cite{coulhonduong}, Devyver \cite{devyver} proved a boundedness result for the Riesz transform $d(\Delta)^{-\frac{1}{2}}$ in the setting of Riemannian manifolds satisfying a global Sobolev inequality of dimension $N$ with an additional assumption that balls of great radius have a polynomial volume growth. It is known in this setting that both $(\ref{D})$ and $(\ref{G})$ are satisfied. He assumed that the negative part $R_-$ of the Ricci curvature satisfies the condition $R_-\in L^{\frac{N}{2}-\eta}\cap L^{\infty}$ for some $\eta>0$ and that there is no harmonic $1$-form on $M$. Under these assumptions, he showed that $\overrightarrow{p_t}(x,y)$ satisfies a Gaussian upper bound which implies the boundedness of the Riesz transform $d(\Delta)^{-\frac{1}{2}}$ from $L^p(M)$ to $L^p(\tm)$ for all $p\in (1,\infty)$. Without the assumption on harmonic $1$-forms, it is also proved in \cite{devyver} that $d(\Delta)^{-\frac{1}{2}}$ is bounded from $L^p(M)$ to $L^p(\tm)$ for all $p\in (1,N)$.

\medskip

\noindent In this article, we study the boundedness of the Riesz transform $d(\Delta)^{-\frac{1}{2}}$ from $L^p(M)$ to $L^p(\tm)$ for $p>2$ assuming $M$ satisfies the doubling volume property $(\ref{D})$ and $p_t(x,y)$ satisfies a Gaussian upper bound $(\ref{G})$. Before stating our results, we recall the Bochner formula $\ddd=\nabla^*\nabla +R_+-R_-=:H-R_-$, where $R_+$ (resp. $R_-$) is the positive part (resp. negative part) of the Ricci curvature and $\nabla$ denotes the Levi-Civita connection on $M$. This formula allows us to consider the Hodge-de Rham Laplacian as a "generalized" Schr\"odinger operator acting on $1$-forms. We then make a standard assumption on the negative part $R_-$ ; namely, we suppose that $R_-$ is $\epsilon$-sub critical, which means that for a certain $\epsilon\in[0,1)$ 
\begin{equation}\tag{S-C}\label{SC}
0\le (R_-\oo,\oo)\le\epsilon\,(H\oo,\oo), \forall \oo\in\mathcal{C}_0^{\infty}(\tm).
\end{equation}

\noindent For further information on condition $($\ref{SC}$)$, see the article of Coulhon and Zhang \cite{coulz} and the references therein.

\noindent Under these assumptions, we prove the following results.

\begin{thm}\label{mainresult}
Assume that (\ref{D}), (\ref{G}) and (\ref{SC}) are satisfied. Then the Riesz transform $d^*(\ddd)^{-\frac{1}{2}}$ is bounded from $L^p(\tm)$ to $L^p(M)$ and the Riesz transform $d(\ddd)^{-\frac{1}{2}}$ is bounded from $L^p(\tm)$ to $L^p(\Lambda^2 T^* M)$ for all $p\in(p_0',2]$ where, $p_0'=\left(\frac{2D}{(D-2)(1-\sqrt{1-\epsilon})}\right)'$ if $D>2$ and $p_0'=1$ if $D\le 2$.
\end{thm}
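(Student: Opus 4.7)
\medskip

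\noindent\textbf{Proof plan.} The strategy is to obtain $L^2$-boundedness of both $d^*(\ddd)^{-1/2}$ and $d(\ddd)^{-1/2}$ directly from integration by parts, and then extend below $p=2$ via off-diagonal heat kernel estimates combined with an abstract Blunck--Kunstmann / Auscher--Martell type extrapolation criterion for singular integrals adapted to $\ddd$ (in the spirit of \cite{acdh}).

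I would first record the $L^2$-bound: for $\oo\in L^2(\tm)$, setting $\eta=(\ddd)^{-1/2}\oo$ one has
\[\|d^*\eta\|_2^2+\|d\eta\|_2^2=(\ddd\eta,\eta)=\|\oo\|_2^2,\]
so both operators are contractions on $L^2$. Next I would derive $L^q$--$L^2$ off-diagonal (Davies--Gaffney) estimates for the perturbed semigroup $e^{-t\ddd}$. The Bochner formula reads $\ddd=H-R_-$ with $H=\nabla^*\nabla+R_+\ge 0$, and because $R_+$ is non-negative, the Hess--Schrader--Uhlenbrock / Kato inequality gives the pointwise domination $|e^{-tH}\oo|(x)\le (e^{-t\Delta}|\oo|)(x)$. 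Combined with (\ref{G}) this transfers the scalar Gaussian bound to $e^{-tH}$, yielding full $L^q$--$L^{q'}$ off-diagonal estimates for $e^{-tH}$ at every exponent.

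To pass from $H$ to $\ddd$, I would exploit the Duhamel identity
\[e^{-t\ddd}=e^{-tH}+\int_0^t e^{-(t-s)H}R_-\,e^{-s\ddd}\,ds,\]
together with the operator bound extracted from (\ref{SC}), namely $\|R_-^{1/2}(\ddd)^{-1/2}\|_{2\to 2}\le\sqrt{\epsilon/(1-\epsilon)}$ (obtained from $\|R_-^{1/2}H^{-1/2}\|_{2\to 2}\le\sqrt{\epsilon}$ and $\|H^{1/2}(\ddd)^{-1/2}\|_{2\to 2}\le 1/\sqrt{1-\epsilon}$), and the Sobolev embedding of exponent $2^*=\tfrac{2D}{D-2}$ implied by (\ref{D})--(\ref{G}). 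Iterating the subcritical perturbation against this Sobolev gain produces $L^{p_0'}$--$L^2$ Davies--Gaffney estimates for $e^{-t\ddd}$; the sharpness of the subcritical step at each iteration is what gives the factor $1-\sqrt{1-\epsilon}$ in the final exponent (it is the positive root of the quadratic $\delta(2-\delta)=\epsilon$ which governs the closing of the iteration). Writing $\sqrt{t}\,d^*e^{-t\ddd}=(\sqrt{t}\,d^*e^{-t\ddd/2})\circ e^{-t\ddd/2}$ and using $\|\sqrt{t}\,d^*e^{-t\ddd/2}\|_{2\to 2}\le C$ (from the $L^2$-analyticity of the semigroup together with the first step) lifts these bounds to the gradient-type operators $\sqrt{t}\,d^*e^{-t\ddd}$ and $\sqrt{t}\,de^{-t\ddd}$.

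Finally, I would apply the Blunck--Kunstmann extrapolation principle: an $L^2$-bounded operator $T$ such that $T\circ e^{-r^2\ddd}$ satisfies $L^{p_0'}$--$L^2$ off-diagonal estimates at scale $r$ extends to a bounded operator on $L^p$ for every $p\in(p_0',2]$. Inserting the representation $(\ddd)^{-1/2}=\pi^{-1/2}\int_0^\infty e^{-t\ddd}\,t^{-1/2}\,dt$ and combining with the previous step yields the conclusion for $T=d^*(\ddd)^{-1/2}$ and $T=d(\ddd)^{-1/2}$. The main obstacle is the middle step: propagating the off-diagonal Gaussian regime from $H$ to $\ddd$ while keeping the subcritical constant optimal, so as to land precisely on $p_0'=\bigl(\tfrac{2D}{(D-2)(1-\sqrt{1-\epsilon})}\bigr)'$ rather than a cruder threshold. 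Controlling the Duhamel series at a level of sharpness synchronized with the Sobolev embedding is the delicate quantitative core, while the $L^2$-bound, the Kato inequality, and the final extrapolation are standard once the off-diagonal estimates are in place.
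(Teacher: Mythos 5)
Your outline reproduces the paper's global architecture (off-diagonal estimates for the semigroup and for $d\,e^{-t\ddd}$, $d^*e^{-t\ddd}$, then the Blunck--Kunstmann weak $(p,p)$ criterion with $A_r=I-(I-e^{-r^2\ddd})^m$), but the step you yourself identify as the ``delicate quantitative core'' is a genuine gap, and the mechanism you propose for it would not work under the hypotheses of the theorem. The only information on $R_-$ available here is the form bound (\ref{SC}), which yields $\|R_-^{1/2}H^{-1/2}\|_{2-2}\le\sqrt{\epsilon}$ and nothing more; there is no $L^p$, Kato-class, or weighted-volume control on $R_-$ (that is precisely the extra hypothesis (\ref{AO}) of Theorem \ref{mainresult2}, which Theorem \ref{mainresult} deliberately avoids). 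Consequently the Duhamel series $e^{-t\ddd}=e^{-tH}+\int_0^te^{-(t-s)H}R_-e^{-s\ddd}\,ds$ cannot be summed in any $L^p\to L^2$ operator norm with $p<2$: splitting $R_-=R_-^{1/2}R_-^{1/2}$ and using the $L^2$ bounds on $R_-^{1/2}H^{-1/2}$ and $H^{1/2}\ddd^{-1/2}$ only reproduces the $L^2$ theory, and any gain of integrability in the factor $e^{-(t-s)H}R_-^{1/2}$ would require exactly the kind of $L^{r}$/weighted information on $R_-^{1/2}$ that is not assumed (also, (\ref{D})--(\ref{G}) give only a localized Gagliardo--Nirenberg inequality as in Lemma \ref{GN2q}, not a global Sobolev embedding of exponent $2D/(D-2)$; the paper stresses that no global Sobolev inequality is assumed). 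The paper obtains the needed $L^p$ theory of $e^{-t\ddd}$ by a completely different device: a Liskevich--Semenov $L^p$-energy argument, resting on the pointwise vector-valued inequality of Lemma \ref{lemme1}, which gives $-\frac1p\frac{d}{dt}\|\oo_t\|_p^p\ge\bigl(\frac{4(p-1)}{p^2}-\epsilon\bigr)\|H^{\frac12}(|\oo_t|^{\frac p2-1}\oo_t)\|_2^2$; here (\ref{SC}) is applied to the $L^2$ object $|\oo_t|^{\frac p2-1}\oo_t$, and the quadratic $\frac{4(p-1)}{p^2}\ge\epsilon$ is where $p_1=\frac{2}{1-\sqrt{1-\epsilon}}$ really comes from (Proposition \ref{extensio}); the further gain of the factor $\frac{D}{D-2}$, giving $p_0=p_1\frac{D}{D-2}$, comes from interpolating with Lemma \ref{GN2q} and the Davies--Gaffney estimate (Proposition \ref{daga}) and invoking the Blunck--Kunstmann extrapolation theorem (proof of Theorem \ref{extension}). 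Your proposal contains no substitute for this $L^p$-contractivity argument, and identifying the root of $\delta(2-\delta)=\epsilon$ does not by itself produce it.

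A second, more technical gap: writing $\sqrt t\,d^*e^{-t\ddd}=(\sqrt t\,d^*e^{-t\ddd/2})\circ e^{-t\ddd/2}$ and using only the \emph{global} bound $\|\sqrt t\,d^*e^{-t\ddd/2}\|_{2-2}\le C$ destroys the spatial localization: in the estimate for $\chi_{C_j(x,r)}d^*e^{-t\ddd}\chi_{B(x,r)}$ the decay in $j$ must come from the gradient factor as well, and without it the coefficients $g(j)$ in the weak $(p,p)$ criterion are not summable against $2^{Dj}$. This is why the paper proves genuine Gaffney-type off-diagonal estimates for $\nabla e^{-t\ddd}$ by Davies' exponential perturbation (Proposition \ref{proposition1}, with the sectoriality Lemma \ref{sectoriel}), transfers them to $d$ and $d^*$ through the pointwise bounds $|d\oo|\le2|\nabla\oo|$, $|d^*\oo|\le\sqrt N|\nabla\oo|$ (Lemma \ref{lemme2}), and only then composes with Theorem \ref{theorem3} to get Theorem \ref{theorem4}. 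Your $L^2$ starting point and the final extrapolation step are fine, but both intermediate blocks need the arguments above rather than the ones you sketch.
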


\noindent Here and throughout this paper, $p_0'$ denotes the conjugate of $p_0$.

\noindent Concerning the Riesz transform on functions, we have the following result.

\begin{cor}\label{cor1}
Assume that (\ref{D}), (\ref{G}) and (\ref{SC}) are satisfied. Then the Riesz transform $d(\Delta)^{-\frac{1}{2}}$ is bounded from $L^p(M)$ to $L^p(\tm)$ for all $p\in(1,p_0)$ where, $p_0=\frac{2D}{(D-2)(1-\sqrt{1-\epsilon})}$ if $D>2$ and $p_0=+\infty$ if $D\le 2$.

\noindent In particular, the Riesz transform $d(\Delta)^{-\frac{1}{2}}$ is bounded from $L^p(M)$ to $L^p(\tm)$ for all $p\in(1,\frac{2D}{D-2})$ if $D>2$ and all $p\in(1,+\infty)$ if $D\le 2$.
\end{cor}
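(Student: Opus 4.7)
The plan is to deduce the corollary from Theorem \ref{mainresult} by a standard duality argument, combined with the intertwining of the two Laplacians by the exterior derivative $d$, and then glue the $p > 2$ range to the $p \le 2$ range provided by Coulhon-Duong \cite{could}.

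\textbf{Step 1 (duality).} Theorem \ref{mainresult} gives that $d^*(\ddd)^{-1/2}$ is bounded from $L^p(\tm)$ to $L^p(M)$ for every $p\in (p_0',2]$. Since $\ddd$ is self-adjoint on $L^2(\tm)$ and $(d^*)^*=d$, the Banach-space adjoint of this operator is $(\ddd)^{-1/2}d$, which is therefore bounded from $L^{p'}(M)$ to $L^{p'}(\tm)$ for every $p'\in [2,p_0)$. The only subtlety to check here is that the ``formal'' algebraic computation of the adjoint is legitimate, i.e.\ that $d$ is densely defined and that the operators under consideration are closable; this is routine since everything is first defined on $\mathcal{C}_0^\infty$.

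\textbf{Step 2 (intertwining).} The identity $\ddd\, d f = d\Delta f$, valid for $f\in\mathcal{C}_0^\infty(M)$, upgrades via the spectral theorem to $e^{-t\ddd}\,d=d\,e^{-t\Delta}$, and then by the subordination formula
\myequation
(\ddd)^{-1/2}=\frac{1}{\sqrt{\pi}}\int_0^\infty e^{-t\ddd}\,\frac{dt}{\sqrt{t}},\qquad (\Delta)^{-1/2}=\frac{1}{\sqrt{\pi}}\int_0^\infty e^{-t\Delta}\,\frac{dt}{\sqrt{t}},
\endmyequation
applied to functions in an appropriate dense subspace, yields $(\ddd)^{-1/2}d=d(\Delta)^{-1/2}$. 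Combining this with Step 1 shows that $d(\Delta)^{-1/2}$ is bounded from $L^{p}(M)$ to $L^{p}(\tm)$ for all $p\in[2,p_0)$.

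\textbf{Step 3 (the range $1<p\le 2$).} Under $(\ref{D})$ and $(\ref{G})$ alone, the result of Coulhon and Duong \cite{could} recalled in the introduction gives that $d(\Delta)^{-1/2}$ is of weak-type $(1,1)$ and bounded from $L^p(M)$ to $L^p(\tm)$ for every $p\in(1,2]$. Gluing this with Step 2 produces the asserted range $p\in(1,p_0)$.

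\textbf{Step 4 (the ``in particular'' statement).} It suffices to observe that for $\epsilon\in[0,1)$ one has $1-\sqrt{1-\epsilon}\le 1$, hence
\myequation
p_0=\frac{2D}{(D-2)(1-\sqrt{1-\epsilon})}\ge\frac{2D}{D-2},
\endmyequation
so the interval $\bigl(1,\tfrac{2D}{D-2}\bigr)$ (resp.\ $(1,+\infty)$ if $D\le 2$) is contained in $(1,p_0)$.

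The only genuine point requiring care is the passage from $\ddd\, d = d\Delta$ (a relation between unbounded operators) to the bounded-operator identity $(\ddd)^{-1/2}d=d(\Delta)^{-1/2}$ on $L^{p'}$. All the remaining steps are formal bookkeeping.
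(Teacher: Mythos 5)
Your proposal is correct and follows essentially the same route as the paper: the paper's proof is precisely the observation that, via the commutation formula $\ddd d = d\Delta$, the adjoint of $d^*(\ddd)^{-\frac{1}{2}}$ is $d(\Delta)^{-\frac{1}{2}}$, so the $[2,p_0)$ range follows from Theorem \ref{mainresult} by duality, with the $(1,2]$ range coming from Coulhon--Duong under (\ref{D}) and (\ref{G}). You merely spell out the intertwining and the gluing of the two ranges, which the paper leaves implicit.
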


\noindent In these results, the constant $D$ is as in $(\ref{D})$ and $\epsilon$ is as in (\ref{SC}). Of course, we take the smallest possible $D$ and $\epsilon$ for which (\ref{D}) and (\ref{SC}) are satisfied. The operator $d$ denotes the exterior derivative acting from the space of $1$-forms to the space of $2$-forms or from the space of functions to the space of $1$-forms according to the context. The operator $d^*$ denotes the $L^2$-adjoint of the exterior derivative $d$, the latter acting from the space of functions to the space of $1$-forms. 

\begin{proof}[Proof of Corollary \ref{cor1}]
\noindent According to the commutation formula $\ddd d=d\Delta$, we see that the adjoint operator of $d^*(\ddd)^{-\frac{1}{2}}$ is exactly $d(\Delta)^{-\frac{1}{2}}$. Then $\textbf{Corollary\;\ref{cor1}}$ is an immediate consequence of $\textbf{Theorem\;\ref{mainresult}}$. 
\end{proof}

\medskip

\noindent Before stating our next result, we set 
\begin{equation*}
Ker_{\mathcal{D}(\overrightarrow{\mathfrak{h}})}(\ddd):=\{\oo\in\mathcal{D}(\overrightarrow{\mathfrak{h}}) : \forall \eta\in\mathcal{C}^{\infty}_0(\tm), (\omega,\ddd\eta)=0\},
\end{equation*}

\noindent where $\mathcal{D}(\overrightarrow{\mathfrak{h}})$ is the domain of the closed sesquilinear form $\mathfrak{h}$ whose associated operator is $H$ (see the next section for the definition of $\mathfrak{h}$). We prove the following.

\begin{thm}\label{mainresult2}
Assume that both (\ref{D}) and (\ref{G}) are satisfied. In addition, suppose that for some $r_1,r_2>2$
\begin{equation}\label{AO}
\int_0^1\left\|\frac{R_-^{\frac{1}{2}}}{v(.,\sqrt{t})^{\frac{1}{r_1}}}\right\|_{r_1}\frac{dt}{\sqrt{t}}+\int_1^{\infty}\left\|\frac{R_-^{\frac{1}{2}}}{v(.,\sqrt{t})^{\frac{1}{r_2}}}\right\|_{r_2}\frac{dt}{\sqrt{t}}<+\infty
\end{equation}

\noindent and 

\begin{equation}\label{noyau}
Ker_{\mathcal{D}(\overrightarrow{\mathfrak{h}})}(\ddd)=\{0\}.
\end{equation}

\noindent Then there exists $\epsilon\in[0,1)$ such that the Riesz transform $d(\Delta)^{-\frac{1}{2}}$ is bounded from $L^p(M)$ to $L^p(\tm)$ for all $p\in(1,p_0)$ where, $p_0=\frac{2D}{(D-2)(1-\sqrt{1-\epsilon})}$ if $D>2$ and $p_0=+\infty$ if $D\le 2$.

\noindent In particular, the Riesz transform $d(\Delta)^{-\frac{1}{2}}$ is bounded from $L^p(M)$ to $L^p(\tm)$ for all $p\in(1,\frac{2D}{D-2})$ if $D>2$ and all $p\in(1,+\infty)$ if $D\le 2$.
\end{thm}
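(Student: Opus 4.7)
The plan is to derive the $\epsilon$-sub-criticality condition (\ref{SC}) from hypotheses (\ref{AO}) and (\ref{noyau}); once that is done, Corollary~\ref{cor1} immediately yields the announced boundedness. Equivalently, I need to find $\epsilon\in[0,1)$ with $(R_-\omega,\omega)\le\epsilon(H\omega,\omega)$ on $\mathcal{C}_0^\infty(\tm)$, which amounts to showing that the operator $T:=R_-^{1/2}H^{-1/2}$ has operator norm strictly less than $1$ on $L^2(\tm)$.

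To control $\|T\|$, I use the subordination identity $H^{-1/2}=\pi^{-1/2}\int_0^\infty e^{-tH}\,dt/\sqrt{t}$, which reduces matters to estimating $\|R_-^{1/2}e^{-tH}\|_{2\to 2}$. The Hess-Schrader-Uhlenbrock domination $|e^{-tH}\omega|\le e^{-t\Delta}|\omega|$ (valid since $R_+\ge 0$), combined with (\ref{G}) and (\ref{D}), yields an estimate of the form
\begin{equation*}
\|R_-^{1/2}e^{-tH}f\|_2\le C\left\|\frac{R_-^{1/2}}{v(\cdot,\sqrt{t})^{1/r}}\right\|_r\|f\|_2
\end{equation*}
for any $r>2$, by Hölder's inequality applied to $\int R_-(e^{-t\Delta}|f|)^2\,d\mu$. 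Integrating in $t$ and splitting at $t=1$ with exponent $r_1$ on $(0,1)$ and $r_2$ on $(1,\infty)$, hypothesis (\ref{AO}) gives $\|T\|<\infty$. Truncating $R_-$ to bounded values on relatively compact sets yields Hilbert--Schmidt approximants converging to $T$ in operator norm (by dominated convergence against (\ref{AO})), so $T$ is in fact compact on $L^2(\tm)$.

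Finally I invoke (\ref{noyau}) to upgrade $\|T\|\le 1$ to $\|T\|<1$. If $\|T\|=1$, the compactness of $T^*T=H^{-1/2}R_-H^{-1/2}$ makes $1$ its largest eigenvalue, with a normalised eigenvector $\omega\in L^2(\tm)$. Setting $\eta:=H^{-1/2}\omega\in\mathcal{D}(H^{1/2})=\mathcal{D}(\overrightarrow{\mathfrak{h}})$, the eigenvalue identity $T^*T\omega=\omega$ tested against $H^{1/2}\xi$ for $\xi\in\mathcal{C}_0^\infty(\tm)$ unfolds to $(R_-\xi,\eta)=(H\xi,\eta)$, whence $(\eta,\ddd\xi)=0$ for every such $\xi$ and $\eta\in Ker_{\mathcal{D}(\overrightarrow{\mathfrak{h}})}(\ddd)$. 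By (\ref{noyau}), $\eta=0$, hence $\omega=H^{1/2}\eta=0$, a contradiction; thus $\|T\|<1$ and $\epsilon:=\|T\|^2$ does the job. The main technical step is the operator-norm compactness of $T$, particularly the large-$t$ regime, where doubling alone does not make the integrand shrink and where the second exponent $r_2>2$ in (\ref{AO}) is essential; the subsequent spectral/kernel argument is standard once one identifies $\mathcal{D}(H^{1/2})$ with the form domain $\mathcal{D}(\overrightarrow{\mathfrak{h}})$.
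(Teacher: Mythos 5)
Your overall strategy is exactly the one the paper follows: reduce the theorem to proving $(\ref{SC})$ from $(\ref{AO})$ and $(\ref{noyau})$, control $T=R_-^{\frac{1}{2}}H^{-\frac{1}{2}}$ by $\|R_-^{\frac{1}{2}}\|_{vol}$ via the subordination formula, the domination $|e^{-tH}\oo|\le e^{-t\Delta}|\oo|$, H\"older and the weighted $L^p$-$L^q$ bounds coming from $(\ref{D})$ and $(\ref{G})$ (this is Lemma~\ref{lemma}); prove compactness of $\Lambda=T^*T$ by truncation (Lemma~\ref{compact}); combine self-adjointness, compactness and the Fredholm alternative with the identification of $Ker_{L^2}(I-\Lambda)$ with the image under $H^{\frac{1}{2}}$ of $Ker_{\mathcal{D}(\overrightarrow{\mathfrak{h}})}(\ddd)$ (Lemma~\ref{isomorphism}) to get $\|\Lambda\|_{2-2}<1$; then conclude through Theorem~\ref{mainresult} and duality, as in Corollary~\ref{cor1}. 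So the architecture is the paper's proof of Theorem~\ref{noyaunul}.

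There is, however, one step whose justification fails as written: the truncated operators (a bounded, compactly supported function times $H^{-\frac{1}{2}}$) are in general \emph{not} Hilbert--Schmidt. The kernel of $H^{-\frac{1}{2}}$ is controlled by $\int_0^\infty \frac{e^{-c\rho^2(x,y)/t}}{v(x,\sqrt{t})}\frac{dt}{\sqrt{t}}$, which has an on-diagonal singularity of order $\rho(x,y)^{1-N}$; its square is not locally integrable as soon as $N\ge 2$ (and on manifolds of slow volume growth, e.g. $\mathbb{R}^2$, the far-field part of the Hilbert--Schmidt integral diverges as well). So compactness of the local piece needs a different mechanism: the paper uses that $Im(H^{-\frac{1}{2}})=\mathcal{D}(\overrightarrow{\mathfrak{h}})$ lies locally in $W^{1,2}$ and invokes the compact (Rellich--Kondrachov) embedding $W^{1,2}(\Lambda^1T^*K)\hookrightarrow L^2$ on compacts $K$, which shows $\phi H^{-\frac{1}{2}}$ is compact for $\phi\in\mathcal{C}_0^{\infty}(M)$; your handling of the tail, namely operator-norm smallness of $\chi_{B(x,r)^C}R_-^{\frac{1}{2}}H^{-\frac{1}{2}}$ by dominated convergence against $(\ref{AO})$, is precisely the paper's. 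A minor omission: the a priori bound $\|T\|_{2-2}\le 1$ that you "upgrade" to a strict inequality should be recorded as coming from the nonnegativity of $\ddd=H-R_-$ (Bochner), since $(\ref{AO})$ only yields finiteness of the norm. With the compactness argument repaired and that remark added, your spectral step (an eigenvector of $\Lambda$ for the eigenvalue $1$ transported by $H^{-\frac{1}{2}}$ into $Ker_{\mathcal{D}(\overrightarrow{\mathfrak{h}})}(\ddd)$, contradicting $(\ref{noyau})$) coincides with the paper's argument.
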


\noindent We emphasize that in $\textbf{Theorem\;\ref{mainresult}}$, $\textbf{Corollary\;\ref{cor1}}$ and $\textbf{Theorem\;\ref{mainresult2}}$, neither a global Sobolev-type inequality nor any estimates on $\nabla_x p_t(x,y)$ or $\|\overrightarrow{p_t}(x,y)\|$ are assumed. 

\medskip

\noindent Condition $(\ref{AO})$ was introduced by Assaad and Ouhabaz \cite{ao}. Note that if $v(x,r)\simeq r^N$, then $(\ref{AO})$ means that $R_-\in L^{\frac{N}{2}-\eta}\cap L^{\frac{N}{2}+\eta}$ for some $\eta>0$. In addition, we show that if the quantity 
\begin{equation*}
\|R_-^{\frac{1}{2}}\|_{vol}:=\int_0^1\left\|\frac{R_-^{\frac{1}{2}}}{v(.,\sqrt{t})^{\frac{1}{r_1}}}\right\|_{r_1}\frac{dt}{\sqrt{t}}+\int_1^{\infty}\left\|\frac{R_-^{\frac{1}{2}}}{v(.,\sqrt{t})^{\frac{1}{r_2}}}\right\|_{r_2}\frac{dt}{\sqrt{t}}
\end{equation*}

\noindent  is small enough, then $R_-$ is $\epsilon$-sub-critical for some $\epsilon\in[0,1)$ depending on $\|R_-^{\frac{1}{2}}\|_{vol}$ and on the constants appearing in $(\ref{D})$ and $(\ref{G})$.

\medskip

\noindent Condition (\ref{noyau}) was also considered by Devyver \cite{devyver}. Under our assumptions, the space $Ker_{\mathcal{D}(\overrightarrow{\mathfrak{h}})}(\ddd)$ is precisely the space of $L^2$ harmonic $1$-forms. See the last section for more details.

\medskip

\noindent The proof of $\textbf{Theorem\;\ref{mainresult}}$ uses similar technics as in Assaad and Ouhabaz \cite{ao} where the Riesz transforms of Schr\"odinger operators $-\Delta+V$ are studied for signed potentials. In our setting, $\ddd=\nabla^*\nabla +R_+-R_-$ can be seen as a "generalized" Schr\"odinger operator. However the arguments from \cite{ao} need substantial modifications, since our Schr\"odinger operator is a vector-valued operator. In particular we cannot use any sub-Markovian property, as is used in \cite{ao}.

\medskip

\noindent In Section 2, we discuss some preliminaries which are necessary for the main proofs. In Section 3, we prove that under the assumptions $(\ref{D})$, $(\ref{G})$ and $($\ref{SC}$)$, the operator $\ddd$ generates a uniformly bounded $\mathcal{C}^0$-semigroup on $L^p(\tm)$ for all $p\in(p_0',p_0)$ where $p_0$ is as in $\textbf{Theorem\;\ref{mainresult}}$. Section 4 is devoted to the proof of $\textbf{Theorem\;\ref{mainresult}}$. Here we use the results of Section 3. In the last section we prove $\textbf{Theorem\;\ref{mainresult2}}$ ; one of the main ingredient is to prove that if the manifold $M$ satisfies condition (\ref{AO}), then $R_-$ satisfies (\ref{SC}) if and only if condition (\ref{noyau}) is satisfied. Here the constant $\epsilon$ appearing in (\ref{SC}) is the $L^2$-$L^2$ norm of the operator $H^{-\frac{1}{2}}R_- H^{-\frac{1}{2}}$.

\section{Preliminaries}

\noindent For all $x\in M$ we denote by $<.,.>_x$ the inner product in the tangent space $T_xM$, in the cotangent space $T^*_xM$ or in the tensor product $T_x^*M\otimes T_x^*M$. By $(.,.)$ we denote the inner product in the Lebesgue space $L^2(M)$ of functions, in the Lebesgue space $L^2(\tm)$ of $1$-forms or in the Lebesgue space $L^2(\Lambda^2T^* M)$ of $2$-forms. By $\|.\|_p$ we denote the usual norm in $L^p(M)$, $L^p(\tm)$ or $L^p(\Lambda^2T^* M)$ and by $\|.\|_{p-q}$ the norm of operators from $L^p$ to $L^q$ (according to the context). The spaces $\mathcal{C}^{\infty}_0(M)$ and $\mathcal{C}^{\infty}_0(\tm)$ denote respectively the space of smooth functions and smooth $1$-forms with compact support on $M$. We denote by $d$ the exterior derivative on $M$ and $d^*$ its $L^2$-adjoint operator. According to the context, the operator $d$ acts from the space of functions on $M$ to $\tm$ or from $\tm$ to $\Lambda^2T^*M$. If $E$ is a subset of $M$, $\chi_E$ denotes the indicator function of $E$.

\medskip

\noindent For $\oo,\eta\in\tm$ and for $x\in M$, we denote by $\oo(x)\otimes\eta(x)$ the tensor product of the linear forms $\oo(x)$ and $\eta(x)$. The inner product on the cotangent space $T^*_x M$ induces an inner product on each tensor product $T^*_x M \otimes T^*_x M$ given by 
\begin{equation*}
<\oo_1(x)\otimes\eta_1(x),\oo_2(x)\otimes\eta_2(x)>_x\;=\;<\oo_1(x),\oo_2(x)>_x<\eta_1(x),\eta_2(x)>_x,
\end{equation*}

\noindent for all $\oo_1,\oo_2,\eta_1,\eta_2\in\tm$ and $x\in M$.

\medskip

\noindent We consider $\Delta$ the non-negative Laplace-Beltrami operator acting on $L^2(M)$ and $p_t(x,y)$ the heat kernel of $M$, that is, the integral kernel of the semigroup $e^{-t\Delta}$.

\medskip

\noindent We consider the Hodge-de Rham Laplacian $\ddd=d^*d +dd^*$ acting on $L^2(\tm)$. The Bochner formula says that $\ddd=\nabla^*\nabla +R_+-R_-$, where $R_+$ (resp. $R_-$) is the positive part (resp. negative part) of the Ricci curvature and $\nabla$ denotes the Levi-Civita connection on $M$. It allows us to look at $\ddd$ as a "generalized" Schr\"odinger operator with signed vector potential $R_+-R_-$.

\medskip

\noindent We define the self-adjoint operator $H=\nabla^*\nabla+R_+$  on $L^2(\tm)$ using the method of sesquilinear forms. That is, for all $\oo,\eta\in\mathcal{C}_0^{\infty}(\tm)$, we set 
\begin{equation*}
\overrightarrow{\mathfrak{h}}(\oo,\eta)=\int_M <\nabla \oo(x),\nabla \eta(x)>_x \,d\mu+\int_M<R_+(x)\oo(x),\eta(x)>_xd\mu,
\end{equation*}
\begin{equation*}
\text{and }\;\mathcal{D}(\overrightarrow{\mathfrak{h}})=\overline{\mathcal{C}_0^{\infty}(\tm)}^{\|.\|_{\overrightarrow{\mathfrak{h}}}},
\end{equation*}

\noindent where $\|\oo\|_{\overrightarrow{\mathfrak{h}}}=\sqrt{\overrightarrow{\mathfrak{h}}(\oo,\oo)+\|\oo\|_2^2}$. 

\medskip

\noindent We say that $R_-$ is $\epsilon$-sub-critical if for a certain constant $0\le\epsilon<1$ 
\begin{equation}\tag{S-C}
0\le (R_-\oo,\oo)\le\epsilon\,(H\oo,\oo), \forall \oo\in\mathcal{C}_0^{\infty}(\tm).
\end{equation}

\noindent Under the assumption (\ref{SC}), we define the self-adjoint operator $\ddd=\nabla^*\nabla+R_+-R_-$  on $L^2(\tm)$ as the operator associated with the form 
\begin{equation*}
\overrightarrow{\mathfrak{a}}(\oo,\eta)=\overrightarrow{\mathfrak{h}}(\oo,\eta)-\int_M<R_-(x)\oo(x),\eta(x)>_xd\mu,
\end{equation*}
\begin{equation*}
\mathcal{D}(\overrightarrow{\mathfrak{a}})=\mathcal{D}(\overrightarrow{\mathfrak{h}}).
\end{equation*}

\medskip

\noindent It is well known by the KLMN theorem (see \cite{ouhabaz}, Theorem 1.19, p.12) that $\overrightarrow{\mathfrak{a}}$ is a closed form, bounded from below. Therefore it has an associated self-adjoint operator which is $H-R_-$.

\medskip

\noindent In order to use the technics in \cite{ao}, we need first to prove that the semigroup $(e^{-t\ddd})_{t\ge 0}$ is uniformly bounded on $L^p(\tm)$ for all $p\in(p_0',2]$.

\section{$L^p$ theory of the heat semigroup on forms}

\noindent To study the boundedness of the semigroup $(e^{-t\ddd})_{t\ge 0}$ on $L^p(\tm)$ for $p\neq 2$, we use perturbation arguments as in \cite{liskevich}, where Liskevich and Semenov studied semigroups associated with Schr\" odinger operators with negative potentials. The main result of this section is the following.

\begin{thm}\label{extension}
Suppose that the assumptions $(\ref{D})$, $(\ref{G})$ and $($\ref{SC}$)$ are satisfied. Then the operator $\ddd=\nabla^*\nabla+R_+-R_-$ generates a uniformly bounded $\mathcal{C}^0$-semigroup on $L^p(\tm)$ for all $p\in(p_0',p_0)$ where $p_0=\frac{2D}{(D-2)(1-\sqrt{1-\epsilon})}$ if $D>2$ and $p_0=+\infty$ if $D\le 2$.
\end{thm}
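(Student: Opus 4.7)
My plan is to follow the perturbation strategy of Liskevich--Semenov \cite{liskevich}, viewing $\ddd=H-R_-$ as a self-adjoint operator obtained from the well-behaved $H=\nabla^*\nabla+R_+$ by subtracting the form-small ``potential'' $R_-$ controlled by (\ref{SC}). The target is, for $\omega\in\mathcal{C}_0^\infty(\tm)$ and $u(t)=e^{-t\ddd}\omega$, to prove $\|u(t)\|_p\le C\|\omega\|_p$ uniformly in $t$ for $p$ in the stated range; density and the $L^2$-theory then upgrade this to a uniformly bounded $\mathcal{C}^0$-semigroup on $L^p(\tm)$.

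\textbf{First step (domination for $e^{-tH}$).} Because $R_+\ge 0$ pointwise, Kato's distributional inequality for the Bochner Laplacian yields $|e^{-tH}\omega|(x)\le e^{-t\Delta}|\omega|(x)$ for all $t>0$ and a.e.\ $x\in M$; this is the classical Hess--Schrader--Uhlenbrock argument, as presented for instance in \cite{ouhabaz}. Combined with (\ref{G}) and (\ref{D}), the scalar Gaussian and Sobolev-type estimates transfer via this pointwise domination to yield that $e^{-tH}$ extends to a uniformly bounded (in fact contractive) $\mathcal{C}^0$-semigroup on $L^p(\tm)$ for every $p\in[1,\infty)$, and that the associated kernel obeys a Gaussian upper bound.

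\textbf{Second step (the Liskevich--Semenov computation).} I would differentiate
\begin{equation*}
-\tfrac{1}{p}\tfrac{d}{dt}\|u(t)\|_p^p \;=\; \mathrm{Re}\,(Hu,\,|u|^{p-2}u) \,-\, \mathrm{Re}\,(R_-u,\,|u|^{p-2}u),
\end{equation*}
after a regularization $|u|_\delta:=(|u|^2+\delta)^{1/2}$ that is standard for handling the zero set of $u$. Integration by parts for the $H$-term, together with the pointwise Kato inequality $|\nabla|u||\le|\nabla u|$ for 1-forms, produces the lower bound
\begin{equation*}
\mathrm{Re}\,(Hu,\,|u|^{p-2}u) \;\ge\; \tfrac{4(p-1)}{p^2}\,\overrightarrow{\mathfrak{h}}\!\left(|u|^{\frac{p-2}{2}}u,\,|u|^{\frac{p-2}{2}}u\right),
\end{equation*}
up to lower-order terms. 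For the bad term, I would apply (\ref{SC}) to the 1-form $\phi:=|u|^{(p-2)/2}u$ (justified by truncating and approximating by smooth compactly supported forms, noting that $|\phi|^2=|u|^p$) to obtain $(R_-\phi,\phi)\le\epsilon\,\overrightarrow{\mathfrak{h}}(\phi,\phi)$. Combining the two gives
\begin{equation*}
\tfrac{d}{dt}\|u\|_p^p \;\le\; -p\!\left(\tfrac{4(p-1)}{p^2}-\epsilon\right)\overrightarrow{\mathfrak{h}}(\phi,\phi),
\end{equation*}
which alone yields the naive Liskevich--Semenov range $p\in(p_-^{\mathrm{naive}},p_+^{\mathrm{naive}})$ with $p_\pm^{\mathrm{naive}}=\frac{2}{1\mp\sqrt{1-\epsilon}}$.

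\textbf{Third step (Sobolev improvement).} To sharpen the exponent to $p_0=\frac{2D}{(D-2)(1-\sqrt{1-\epsilon})}$, i.e.\ to gain the factor $\frac{D}{D-2}$, I would invoke the Sobolev-type inequality $\|\phi\|_{2D/(D-2)}^2 \le C\,\overrightarrow{\mathfrak{h}}(\phi,\phi)$ that follows from (\ref{D}) and (\ref{G}) together with the domination of step one (when $D>2$). This lets one re-estimate the coupling between $(R_-\phi,\phi)$ and the Dirichlet form through H\"older/Sobolev rather than the crude inequality (\ref{SC}), producing the quadratic condition in $p$ whose roots are precisely $p_0$ and $p_0'$. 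When $D\le 2$, Nash-type substitutes for Sobolev show that the above computation closes for every $p\in(1,\infty)$. The full symmetric range $(p_0',p_0)$ is then obtained by duality, using self-adjointness of $\ddd$ on $L^2(\tm)$.

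\textbf{Main obstacle.} The principal difficulty is the vector-valued nature of $\ddd$: unlike the scalar Schr\"odinger case of \cite{liskevich}, no sub-Markov property is available, and (\ref{SC}) is a quadratic-form inequality among 1-forms, not among scalars. Transferring it to a bound on $\int |u|^{p-2}\langle R_-u,u\rangle\,d\mu$ via the substitution $\phi=|u|^{(p-2)/2}u$ requires a careful Kato-type regularization and a justification that $\phi\in\mathcal{D}(\overrightarrow{\mathfrak{h}})$ by approximation with compactly supported smooth 1-forms. Matching the precise exponent $p_0$ in the statement, rather than the naive $\frac{2}{1-\sqrt{1-\epsilon}}$, is the second delicate point and relies on feeding the Sobolev inequality derived from (\ref{D}) and (\ref{G}) into the Liskevich--Semenov inequality in the sharpest way.
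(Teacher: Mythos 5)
Your first two steps reproduce the paper's own argument (the pointwise inequality of Lemma \ref{lemme1} plus the Liskevich--Semenov differential inequality of Proposition \ref{extensio}, giving contractivity on $[p_1',p_1]$ with $p_1=\frac{2}{1-\sqrt{1-\epsilon}}$), but your third step, which is where the stated exponent $p_0=\frac{2D}{(D-2)(1-\sqrt{1-\epsilon})}$ has to be produced, contains a genuine gap. First, the global Sobolev inequality $\|\phi\|_{2D/(D-2)}^2\le C\,\overrightarrow{\mathfrak{h}}(\phi,\phi)$ does \emph{not} follow from (\ref{D}) and (\ref{G}): these hypotheses only yield localized (relative Faber--Krahn / Gagliardo--Nirenberg) inequalities in which the volume $v(x,\sqrt{t})$ appears, and the paper explicitly emphasizes that no global Sobolev-type inequality is assumed; a doubling manifold with Gaussian heat kernel bounds need not satisfy any uniform lower volume bound $v(x,r)\ge c r^D$, which is what a dimension-$D$ Sobolev inequality would force. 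Second, even granting such an inequality, your plan to ``re-estimate the coupling between $(R_-\phi,\phi)$ and the Dirichlet form through H\"older/Sobolev rather than the crude inequality (\ref{SC})'' has nothing to run on: in Theorem \ref{extension} the \emph{only} hypothesis on $R_-$ is the form bound (\ref{SC}); no $L^{D/2}$-type integrability of $R_-$ is assumed, so H\"older cannot be applied to $\int |u|^{p-2}\langle R_-u,u\rangle\,d\mu$, and within the pure differential-inequality scheme the form bound can only ever give the symmetric interval $[p_1',p_1]$.

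The mechanism that actually enlarges the range by the factor $\frac{D}{D-2}$ is of a different nature: it is an extrapolation argument in the spirit of Assaad--Ouhabaz \cite{ao} and Blunck--Kunstmann \cite{blunck3}. One uses the domination $|e^{-tH}\oo|\le e^{-t\Delta}|\oo|$ and (\ref{G}) to prove a \emph{localized} $L^2$--$L^q$ estimate $\|\chi_{B(x,\sqrt{t})}\oo\|_q\le C\,v(x,\sqrt{t})^{\frac{1}{q}-\frac{1}{2}}(\|\oo\|_2+\sqrt{t}\|\ddd^{\frac12}\oo\|_2)$ for $\frac{q-2}{q}D<2$ (Lemma \ref{GN2q}), combines it with the uniform $L^p$ bound on $[p_1',p_1]$ to get generalized Gaussian $L^p$--$L^{pq}$ estimates for $e^{-s\ddd}$ (Proposition \ref{e2}), establishes Davies--Gaffney $L^2$--$L^2$ off-diagonal bounds by Davies' perturbation method (Proposition \ref{daga}), interpolates the two to obtain off-diagonal $L^r$--$L^s$ bounds with Gaussian decay, and then invokes Theorem 1.1 of \cite{blunck3} (together with analyticity on $L^p$, $p\in(p_1',p_1)$) to conclude uniform boundedness on $L^p$ for $p\in[2,p_1 q_0)=[2,p_0)$ with $q_0=\frac{D}{D-2}$ (or $q_0=\infty$ if $D\le 2$); the range $(p_0',2]$ then follows by duality. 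Without this extrapolation ingredient (or some substitute for it), your outline cannot reach the exponent $p_0$ claimed in the statement.
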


\noindent To demonstrate $\textbf{Theorem\;\ref{extension}}$ we proceed in two steps. The first step consists in proving the result for $p$ in the smaller range $[p_1',p_1]$ where $p_1=\frac{2}{1-\sqrt{1-\epsilon}}$ ; we do this in $\textbf{Proposition\;\ref{extensio}}$, with the help of $\textbf{Lemma\;\ref{lemme1}}$ below. The second step consists in extending this interval using interpolation between the estimates of $\textbf{Proposition\;\ref{e2}}$ and $\textbf{Proposition\;\ref{daga}}$.

\medskip

\noindent We begin with the following lemma. 

\begin{lem}\label{lemme1}
Let $p\ge 1$. For any suitable $\omega\in\Lambda^1 T^*M$ and for every $x\in M$ 
\begin{equation}\label{equ1}
<\nabla(\omega|\omega|^{p-2})(x),\nabla \omega(x)>_x\;\;\ge\, \frac{4(p-1)}{p^2}<\nabla(\omega|\omega|^{\frac{p}{2}-1})(x),\nabla(\omega|\omega|^{\frac{p}{2}-1})(x)>_x.
\end{equation}
\end{lem}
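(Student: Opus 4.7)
The plan is to reduce the inequality to the pointwise Kato inequality $\bigl|d|\omega|\bigr|_x \le |\nabla\omega(x)|_x$ by a direct computation. Throughout I set $f = |\omega|$ and work at points $x$ where $\omega(x) \neq 0$; the vanishing locus is handled by the usual regularization $f_\delta := \sqrt{|\omega|^2 + \delta^2}$ and passage to the limit $\delta\to 0^+$.

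First, I would apply the Leibniz rule for the Levi-Civita connection on the product of a scalar function and a $1$-form to obtain
\begin{equation*}
\nabla(\omega f^{p-2}) = (p-2)\, f^{p-3}\, df \otimes \omega + f^{p-2}\, \nabla\omega,
\end{equation*}
together with the analogous formula with exponent $p/2-1$ in place of $p-2$. Next, I would establish the pointwise identity
\begin{equation*}
\langle df \otimes \omega,\, \nabla\omega\rangle_x \;=\; f(x)\, |df(x)|_x^2.
\end{equation*}
This follows by differentiating $f^2 = \langle\omega,\omega\rangle$, which yields $f\, df = \langle \nabla\omega,\omega\rangle$, the right-hand side being the $1$-form obtained by contracting $\nabla\omega$ with $\omega$ in the form slot. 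Substituting $df = f^{-1}\langle\nabla\omega,\omega\rangle$ into the inner product above then collapses it to $f\,|df|^2$.

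Third, I would insert these formulas into both sides of (\ref{equ1}). The left-hand side becomes $(p-2)\,f^{p-2}|df|^2 + f^{p-2}|\nabla\omega|^2$, while expanding $|\nabla(\omega f^{p/2-1})|^2$ and using the identity above for the cross-term gives $\tfrac{p^2-4}{4}\,f^{p-2}|df|^2 + f^{p-2}|\nabla\omega|^2$. Multiplying the latter by $\tfrac{4(p-1)}{p^2}$ and subtracting from the former, the coefficients of $f^{p-2}|df|^2$ and $f^{p-2}|\nabla\omega|^2$ come out to $-(p-2)^2/p^2$ and $+(p-2)^2/p^2$ respectively, so the difference equals
\begin{equation*}
\frac{(p-2)^2}{p^2}\, f^{p-2}\bigl(|\nabla\omega|^2 - |df|^2\bigr).
\end{equation*}
This is nonnegative by the Kato inequality $|df| = \bigl|d|\omega|\bigr| \le |\nabla\omega|$, which itself is a Cauchy--Schwarz consequence of the identity $f\,df = \langle \nabla\omega, \omega\rangle$. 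I do not expect any serious obstacle; the only technical care lies in correctly tracking which slot of $\nabla\omega$ is contracted and in justifying the pointwise manipulations near the zero set of $\omega$ via the regularization above.
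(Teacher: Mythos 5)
Your proof is correct and takes essentially the same route as the paper: both expand $\nabla(\omega|\omega|^{s})$ by the Leibniz rule, evaluate the cross terms via $|\omega|\,d|\omega|=\langle\nabla\omega,\omega\rangle$, compare coefficients, and identify the difference as $\frac{(p-2)^2}{p^2}|\omega|^{p-2}\bigl(|\nabla\omega|^2-|d|\omega||^2\bigr)$, your appeal to Kato's inequality being exactly the paper's explicit remainder, since in a synchronous frame $|\omega|^2\bigl(|\nabla\omega|^2-|d|\omega||^2\bigr)=\sum_{i<j}|f_i\,df_j-f_j\,df_i|_x^2\ge 0$. The regularization $\sqrt{|\omega|^2+\delta^2}$ you invoke near the zero set is also the one the paper uses.
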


\begin{rem}
In the previous statement, "suitable" means that the calculations make sense with such a $\omega$. For instance, a form $\omega\in\mathcal{C}_0^{\infty}(\tm)$ is suitable.
\end{rem}

\begin{proof}
To make the calculations simpler, for every $x\in M$, we work in a synchronous frame. That is we choose an orthonormal frame $\{X_i\}_i$ to have the Christoffel symbols $\Gamma_{ij}^k(x)=0$ at $x$ (see for instance \cite{gallot} p.93 or \cite{rosenberg} p.70,73 for more details). In what follows, we use properties satisfied by the Levi-Civita connection $\nabla$, which can be found in \cite{rosenberg} p.64-66.

\medskip

\noindent Considering $\{\theta^i\}_i$ the orthonormal frame of $1$-forms dual to $\{X_i\}_i$, we write for a $1$-form $\omega$, $\omega(y)=\displaystyle{\sum_i} f_i(y)\theta^i=\displaystyle{\sum_i}\omega_i(y)$ for all $y$ in a neighborhood of $x$. With this choice of local coordinates we have at $x$, $|\omega(x)|_x=\sqrt{\displaystyle{\sum_i}f_i(x)^2}$ and $\nabla\theta^i=0$ for all $i$. Then, when $\oo(x)\neq 0$, we obtain 
\begin{equation}\label{equati1}
\nabla(|\oo|)(x)=\frac{\displaystyle{\sum_i} f_i(x)df_i(x)}{|\oo(x)|_x}.
\end{equation}

\noindent We recall that we have an inner product in each tensor product $T^*_x M\otimes T^*_x M$ satisfying 
\begin{equation}\label{produittensoriel}
<\oo_1(x)\otimes\eta_1(x),\oo_2(x)\otimes\eta_2(x)>_x\;=\;<\oo_1(x),\oo_2(x)>_x<\eta_1(x),\eta_2(x)>_x,
\end{equation}

\noindent for all $\oo_1,\oo_2,\eta_1,\eta_2\in\tm$ and $x\in M$. In particular for all $\oo,\eta\in\tm$ and $x\in M$
\begin{equation}\label{equati2}
|\oo(x)\otimes\eta(x)|_x=|\oo(x)|_x|\eta(x)|_x.
\end{equation}

\medskip 

\noindent To avoid dividing by 0, one can replace $|\oo(x)|_x$ by $|\oo(x)|_{x,\epsilon}:=\sqrt{\displaystyle{\sum_i}f_i(x)^2+\epsilon}$ for some $\epsilon>0$, make the calculations and let $\epsilon$ tend to $0$. For simplicity, we ignore this step and make the calculations formally.

\noindent We first deal with the RHS of (\ref{equ1}). Using (\ref{equati1}) and (\ref{equati2}), we have
\begin{align*}
&<\nabla(\omega|\omega|^{\frac{p}{2}-1})(x),\nabla(\omega|\omega|^{\frac{p}{2}-1})(x)>_x\\
&=\left| \oox^{\frac{p}{2}-1}\nabla\oo(x)+(\frac{p}{2}-1)\oox^{\frac{p}{2}-3}(\underset{i}{\sum}f_i(x)df_i(x))\otimes\oo(x)\right|_x^2\\
&=\oox^{p-2}|\nabla\oo(x)|_x^2+(\frac{p}{2}-1)^2\oox^{p-6}|\underset{i}{\sum}f_i(x)df_i(x)|_x^2 \oox^2 \\
&\;\;\;\;+(p-2)\oox^{p-4}<\nabla\oo(x),(\underset{i}{\sum}f_i(x)df_i(x))\otimes\oo(x)>_x.
\end{align*}

\noindent Now noticing that $(\theta^i)_i$ is an orthonormal basis of $T^*_xM$ and using $(\ref{produittensoriel})$ yield
\begin{align*}
<\nabla\oo(x),(\underset{i}{\sum}f_i(x)df_i(x))\otimes\oo(x)>_x
&=\;<\sum_j df_j(x)\otimes\theta^j,(\underset{i}{\sum}f_i(x)df_i(x))\otimes\oo(x)>_x\\
&=\sum_{i,k}f_i(x)f_k(x)<df_i(x),df_k(x)>_x\\
&=|\underset{i}{\sum}f_i(x)df_i(x)|_x^2.
\end{align*}

\noindent Then we obtain 
\begin{align*}
&<\nabla(\omega|\omega|^{\frac{p}{2}-1})(x),\nabla(\omega|\omega|^{\frac{p}{2}-1})(x)>_x\\
&=\oox^{p-2}|\nabla\oo(x)|_x^2+(\frac{p^2}{4}-1)\oox^{p-4}|\underset{i}{\sum}f_i(x)df_i(x)|_x^2.
\end{align*}

\noindent Using the equality $|\nabla\oo(x)|_x=\displaystyle{\sum_i|df_i(x)|_x^2}$ at $x$, a simple calculation gives for all $i$
\begin{align*}
&|\underset{i}{\sum}f_i(x)df_i(x)|_x^2=\sum_if_i(x)^2|df_i(x)|_x^2+2\sum_{i<j}f_i(x)f_j(x)<df_i(x),df_j(x)>_x\\
&=\oox^2|\nabla\oo(x)|_x^2-\sum_i\sum_{j\neq i}f_j(x)^2|df_i(x)|_x^2+2\sum_{i<j}f_i(x)f_j(x)<df_i(x),df_j(x)>_x.
\end{align*}

\noindent Thus for all $i$
\begin{equation}\label{eq1}
|\underset{i}{\sum}f_i(x)df_i(x)|_x^2=\oox^2|\nabla\oo(x)|_x^2-\sum_{i<j}|f_i(x)df_j(x)-f_j(x)df_i(x)|_x^2.
\end{equation}

\noindent Finally we obtain 
\begin{align*}
&<\nabla(\omega|\omega|^{\frac{p}{2}-1})(x),\nabla(\omega|\omega|^{\frac{p}{2}-1})(x)>_x\\
&=\frac{p^2}{4}\oox^2|\nabla\oo(x)|_x^2-(\frac{p^2}{4}-1)\oox^{p-4}\sum_{i<j}|f_i(x)df_j(x)-f_j(x)df_i(x)|_x^2.
\end{align*}

\noindent Let us deal with the LHS of (\ref{equ1}) now. We write 
\begin{equation*}
<\nabla(\omega|\omega|^{p-2})(x),\nabla \omega(x)>_x\;=\;\sum_i<\nabla(\omega_i|\omega|^{p-2})(x),\nabla \omega(x)>_x.
\end{equation*}

\noindent Using again $(\ref{produittensoriel})$, we observe that for all $i,j$ with $i\neq j$, $<\nabla\oo_i(x),\nabla\oo_j(x)>_x=0$. Thus, using (\ref{equati1}), we obtain that for all $i$ 
\begin{align*}
&<\nabla(\omega_i|\omega|^{p-2})(x),\nabla \omega(x)>_x\\
&=\oox^{p-2}|\nabla\oo_i(x)|^2_x+(p-2)\oox^{p-4}\sum_j f_j(x)<df_j(x)\otimes \oo_i(x),\nabla\oo(x)>_x.
\end{align*}

\noindent From $(\ref{produittensoriel})$ again, we deduce that for all $i,j$
\begin{align*}
<df_j(x)\otimes \oo_i(x),\nabla\oo(x)>_x
&=f_i(x)<df_j(x)\otimes \theta^i,\sum_k df_k(x)\otimes\theta^k>_x\\
&=f_i(x)<df_i(x),df_j(x)>_x.
\end{align*}

\noindent Hence for all $i$ 
\begin{align*}
&<\nabla(\omega_i|\omega|^{p-2})(x),\nabla \omega(x)>_x\\
&=\oox^{p-2}|\nabla\oo_i(x)|^2_x+(p-2)\oox^{p-4}\sum_j f_i(x)f_j(x)<df_i(x),df_j(x)>_x.
\end{align*}

\noindent As we did before to obtain $(\ref{eq1})$, we find 
\begin{align*}
&<\nabla(\omega|\omega|^{p-2})(x),\nabla \omega(x)>_x\\
&=\sum_i<\nabla(\omega_i|\omega|^{p-2})(x),\nabla \omega(x)>_x\\
&=(p-1)\oox^{p-2}|\nabla\oo(x)|^2_x-(p-2)\oox^{p-4}\sum_{i<j}|f_i(x)df_j(x)-f_j(x)df_i(x)|_x^2.
\end{align*}

\noindent To conclude we calculate 
\begin{align*}
&\frac{1}{p-1}<\nabla(\omega|\omega|^{p-2})(x),\nabla \omega(x)>_x-\frac{4}{p^2}<\nabla(\omega|\omega|^{\frac{p}{2}-1})(x),\nabla(\omega|\omega|^{\frac{p}{2}-1})(x)>_x\\
&=\left(\frac{4}{p^2}(\frac{p^2}{4}-1)-\frac{p-2}{p-1}\right)\oox^{p-4}\sum_{i<j}|f_i(x)df_j(x)-f_j(x)df_i(x)|_x^2\\
&=\frac{(p-2)^2}{(p-1)p^2}\oox^{p-4}\sum_{i<j}|f_i(x)df_j(x)-f_j(x)df_i(x)|_x^2\\
&\ge 0.
\end{align*}

\noindent This proves the lemma.

\end{proof}

\noindent We are now able to prove that the semigroup $(e^{-t\ddd})_{t\ge 0}$ is uniformly bounded on $L^p(\tm)$ for some $p\neq 2$ under the assumption (\ref{SC}).

\begin{prop}\label{extensio}
Suppose that the negative part $R_-$ of the Ricci curvature satisfies the assumption (\ref{SC}). Then the operator $\ddd$ generates a $\mathcal{C}^0$-semigroup of contractions on $L^p(\tm)$ for all $p\in[p_1',p_1]$ where $p_1=\frac{2}{1-\sqrt{1-\epsilon}}$.
\end{prop}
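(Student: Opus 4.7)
The plan is to prove contractivity by differentiating $\|e^{-t\vec{\Delta}}\omega\|_p^p$ along the semigroup and showing the derivative is non-positive for $p$ in the stated range. Since $\vec{\Delta}$ is self-adjoint on $L^2(\tm)$, it suffices by duality to establish the estimate on $L^p(\tm)$ for $p \in [2, p_1]$; the range $[p_1', 2]$ will then follow from the self-adjointness (i.e. $(e^{-t\vec{\Delta}})^* = e^{-t\vec{\Delta}}$) and the identification of the dual of $L^p$ with $L^{p'}$. Strong continuity at $t=0$ on $L^p$ will come from the uniform $L^p$-boundedness together with the known strong continuity on $L^2$, using the density of $L^2(\tm) \cap L^p(\tm)$ in $L^p(\tm)$.

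For the core computation, I would fix $\omega_0 \in \mathcal{C}_0^{\infty}(\tm)$, set $\omega(t) = e^{-t\vec{\Delta}}\omega_0$, and compute formally
\[
\frac{d}{dt}\|\omega(t)\|_p^p \;=\; -p\,\bigl(\vec{\Delta}\omega(t),\,\omega(t)|\omega(t)|^{p-2}\bigr).
\]
Using the form $\vec{\mathfrak{a}}$ associated with $\vec{\Delta}$, and the identity $\langle R_{\pm}\omega,\omega|\omega|^{p-2}\rangle_x = |\omega(x)|_x^{p-2}\langle R_{\pm}\omega,\omega\rangle_x = \langle R_{\pm}\widetilde{\omega},\widetilde{\omega}\rangle_x$ with $\widetilde{\omega} := \omega|\omega|^{p/2-1}$, I would split
\[
(\vec{\Delta}\omega,\,\omega|\omega|^{p-2}) \;=\; \bigl(\nabla\omega,\nabla(\omega|\omega|^{p-2})\bigr) + (R_+\widetilde{\omega},\widetilde{\omega}) - (R_-\widetilde{\omega},\widetilde{\omega}).
\]
Lemma \ref{lemme1} gives $\bigl(\nabla\omega,\nabla(\omega|\omega|^{p-2})\bigr) \ge \frac{4(p-1)}{p^2}\|\nabla\widetilde{\omega}\|_2^2$, and the assumption (\ref{SC}) applied to $\widetilde{\omega}$ yields $(R_-\widetilde{\omega},\widetilde{\omega}) \le \epsilon\bigl(\|\nabla\widetilde{\omega}\|_2^2 + (R_+\widetilde{\omega},\widetilde{\omega})\bigr)$. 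Combining these,
\[
(\vec{\Delta}\omega,\,\omega|\omega|^{p-2}) \;\ge\; \Bigl(\tfrac{4(p-1)}{p^2}-\epsilon\Bigr)\|\nabla\widetilde{\omega}\|_2^2 + (1-\epsilon)(R_+\widetilde{\omega},\widetilde{\omega}).
\]
The quadratic inequality $\epsilon p^2 - 4p + 4 \le 0$ has roots $\frac{2}{1\pm\sqrt{1-\epsilon}}$, so the bracket is non-negative exactly when $p \in [p_1', p_1]$. Hence $\|\omega(t)\|_p$ is non-increasing, giving contractivity on the dense set $\mathcal{C}_0^{\infty}(\tm)$ and, by a density/closure argument, on all of $L^p(\tm)$ for $p \in [2, p_1]$.

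The main technical obstacle is the rigorous justification of taking $\omega|\omega|^{p-2}$ as a test element in the form $\vec{\mathfrak{a}}$: the function $|\omega|^{p-2}$ may be singular where $\omega$ vanishes (if $p<2$) or merely continuous (if $p>2$), so one must regularize, say by replacing $|\omega(x)|_x$ by $(|\omega(x)|_x^2 + \delta)^{1/2}$, run the whole computation with this approximation, and then let $\delta \to 0$ using dominated convergence to recover the identities. One also needs to check that $\omega(t) \in \mathcal{D}(\vec{\mathfrak{a}})$ for $t > 0$ with enough regularity so that the computations in Lemma \ref{lemme1} are applicable, and to justify that the pointwise inequality integrates up cleanly; this is where the approximation in a synchronous frame (already used to prove Lemma \ref{lemme1}) must be compatible with the global form-domain approximation. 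Once this regularization is in place, the algebraic identity $\epsilon p^2 - 4p + 4 \le 0 \iff p \in [p_1',p_1]$ closes the argument and yields the claimed $\mathcal{C}^0$-semigroup of contractions.
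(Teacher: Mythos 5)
Your proposal is correct and follows essentially the same route as the paper: differentiate $\|e^{-t\ddd}\eta\|_p^p$ along the flow, rewrite the curvature terms in terms of $\widetilde{\omega}=\omega|\omega|^{\frac{p}{2}-1}$, invoke Lemma \ref{lemme1} together with (\ref{SC}), and observe that $\frac{4(p-1)}{p^2}\ge\epsilon$ exactly for $p\in[p_1',p_1]$. The only (inessential) deviations are that you pass through duality for $p<2$ where the paper treats the whole interval directly, and that you spell out the regularization and density issues which the paper handles by the same formal computation plus a "usual density argument."
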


\begin{proof}
We consider $\eta\in\mathcal{C}_0^{\infty}(\tm)$ and set $\oo_t=e^{-t\ddd}\eta$ for all $t\ge 0$. Taking the inner product of both sides of the equation $-\frac{d}{dt}\oo_t=\ddd \oo_t$ with $|\oo_t|^{p-2}\oo_t$ and integrating over $M$ yield 
\begin{align*}
&-\frac{1}{p}\frac{d}{dt}\|\oo_t\|_p^p= (\ddd\oo_t,|\oo_t|^{p-2}\oo_t)\\
&= \int_M <\nabla\oo_t(x),\nabla(|\oo_t|^{p-2}\oo_t)(x)>_xd\mu+\left((R_+-R_-)\oo_t,|\oo_t|^{p-2}\oo_t\right).
\end{align*}

\noindent Since we have by linearity of $R_+(x)$ and $R_-(x)$ 
\begin{equation*}
\left((R_+-R_-)\oo_t,|\oo_t|^{p-2}\oo\right)=\left((R_+-R_-)(|\oo_t|^{\frac{p}{2}-1}\oo_t),|\oo_t|^{\frac{p}{2}-1}\oo_t\right),
\end{equation*}

\noindent the previous lemma and the the assumption (\ref{SC}) yield
\begin{equation*}
-\frac{1}{p}\frac{d}{dt}\|\oo_t\|_p^p\ge \left(\frac{4(p-1)}{p^2}-\varepsilon\right)\|H^{\frac{1}{2}}(|\oo_t|^{\frac{p}{2}-1}\oo_t)\|_2^2.
\end{equation*}

\noindent Then for all $p\in[\frac{2}{1+\sqrt{1-\varepsilon}},\frac{2}{1-\sqrt{1-\epsilon}}]$ 
\begin{equation*}
-\frac{1}{p}\frac{d}{dt}\|\oo_t\|_p^p\ge 0.
\end{equation*} 

\noindent Therefore $\|\oo_t\|_p\le\|\oo_0\|_p$, that is, 
\begin{equation*}
\|e^{-t\ddd}\eta\|_p\le\|\eta\|_p, \forall \eta\in\mathcal{C}_0^{\infty}(\tm),
\end{equation*}

\noindent and we conclude by a usual density argument.
\end{proof}

\noindent Actually, as in \cite{liskevich} and \cite{ao}, we can obtain a better interval than $[p_1',p_1]$ by interpolation arguments and prove $\textbf{Theorem\;\ref{extension}}$ . The ideas of this proof are the same as in \cite{ao}. However we give some details which we adapt to our setting.

\begin{lem}\label{GN2q}
Let $q$ be such that $2<q\le\infty$ and $\frac{q-2}{q}D<2$. Then for all $x\in M$, $t>0$ and $\oo\in\mathcal{D}(\overrightarrow{\mathfrak{a}})$ 
\begin{equation*}
\|\chi_{B(x,\sqrt{t})}\oo\|_{q}\le \frac{C}{v(x,\sqrt{t})^{\frac{1}{2}-\frac{1}{q}}}\left(\|\oo\|_2+\sqrt{t}\|\ddd^{\frac{1}{2}}\oo\|_2\right).
\end{equation*}
\end{lem}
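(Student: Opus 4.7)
The plan is to reduce the vector-valued estimate to a scalar local Sobolev-type inequality applied to $|\omega|$, using the Kato inequality and the $\epsilon$-sub-criticality of $R_-$ to control the vector gradient by $\vec{\Delta}^{1/2}\omega$.

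First I would record the elementary inequality
$$\|\nabla\omega\|_2 \le \frac{1}{\sqrt{1-\epsilon}}\,\|\vec{\Delta}^{1/2}\omega\|_2, \quad \omega\in\mathcal{D}(\overrightarrow{\mathfrak{a}}).$$
This follows from the Bochner formula and the $\epsilon$-sub-criticality $($\ref{SC}$)$: one has
$$\|\vec{\Delta}^{1/2}\omega\|_2^2 = \overrightarrow{\mathfrak{a}}(\omega,\omega) = \overrightarrow{\mathfrak{h}}(\omega,\omega) - (R_-\omega,\omega) \ge (1-\epsilon)\,\overrightarrow{\mathfrak{h}}(\omega,\omega) \ge (1-\epsilon)\|\nabla\omega\|_2^2,$$
since $R_+\ge 0$. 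Next I invoke the Kato inequality for $1$-forms, $|d|\omega|(x)|_x \le |\nabla\omega(x)|_x$ almost everywhere on $\{\omega\neq 0\}$, which gives $\||d|\omega|\|\,\|_2 \le \|\nabla\omega\|_2$, so that $|\omega|\in W^{1,2}(M)$ with
$$\|\nabla|\omega|\|_2 \le \frac{1}{\sqrt{1-\epsilon}}\|\vec{\Delta}^{1/2}\omega\|_2.$$

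The second step is the scalar ingredient. Under the hypotheses $(\ref{D})$ and $(\ref{G})$, Saloff-Coste's theorem provides a relative Faber-Krahn inequality and hence a local $L^2$-Sobolev inequality on balls: for every $q$ satisfying $2<q\le\infty$ and $\frac{q-2}{q}D<2$, every $x\in M$, $r>0$, and every $f\in W^{1,2}(B(x,r))$,
$$\|f\|_{L^q(B(x,r))} \le \frac{C\,r}{v(x,r)^{\frac{1}{2}-\frac{1}{q}}}\left(\|\nabla f\|_{L^2(B(x,r))} + \frac{1}{r}\|f\|_{L^2(B(x,r))}\right).$$
The condition $\frac{q-2}{q}D<2$ is precisely what is needed to run the Moser-type iteration or Sobolev embedding associated with the homogeneous dimension $D$ (equivalent to $q<\frac{2D}{D-2}$ when $D>2$, any $q$ when $D\le 2$). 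I would cite this as a standard consequence of doubling and the Gaussian heat kernel bound.

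Finally I apply this local Sobolev inequality to $f=|\omega|$ with $r=\sqrt{t}$. Combining with the gradient bound from the first step yields
$$\|\chi_{B(x,\sqrt{t})}\omega\|_q = \||\omega|\|_{L^q(B(x,\sqrt{t}))} \le \frac{C\sqrt{t}}{v(x,\sqrt{t})^{\frac{1}{2}-\frac{1}{q}}}\left(\|\nabla|\omega|\|_2 + \frac{1}{\sqrt{t}}\|\omega\|_2\right),$$
and the right-hand side is bounded by $\frac{C'}{v(x,\sqrt{t})^{1/2-1/q}}(\|\omega\|_2+\sqrt{t}\,\|\vec{\Delta}^{1/2}\omega\|_2)$ with $C'=C\max(1,(1-\epsilon)^{-1/2})$. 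The main technical point to be careful about is the Kato inequality for $1$-forms (a well-known refinement of the scalar Kato inequality, valid pointwise a.e.) and the precise form of Saloff-Coste's local Sobolev inequality that allows any $q$ in the range $\frac{q-2}{q}D<2$, including $q=\infty$ in the regime $D\le 2$ where the embedding is actually $W^{1,2}\hookrightarrow L^\infty$ on balls.
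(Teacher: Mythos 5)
Your Step 1 is fine: Kato's inequality $|d|\oo||\le|\nabla\oo|$ a.e.\ together with (S-C) and $R_+\ge 0$ does give $\|\nabla|\oo|\|_2\le\|\nabla\oo\|_2\le(1-\epsilon)^{-1/2}\|\ddd^{\frac12}\oo\|_2$, and this is the same mechanism the paper uses at the very end of its proof to pass from $H^{\frac12}$ to $\ddd^{\frac12}$. The gap is in Step 2, which is the heart of the matter. Under (D) and (G) alone, what Grigor'yan/Saloff-Coste theory gives you is the relative Faber--Krahn inequality, and the local Sobolev-type inequality it yields is the \emph{Dirichlet} one, i.e.\ for $f$ compactly supported in $B(x,r)$ (equivalently $f\in W_0^{1,2}(B)$). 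The inequality you invoke --- with the local norms $\|\nabla f\|_{L^2(B)}$ and $\|f\|_{L^2(B)}$ on the right and valid for \emph{every} $f\in W^{1,2}(B(x,r))$ --- is the Neumann-type local Sobolev inequality; it does not follow ``hence'' from Faber--Krahn, and it is normally derived from (D) plus the $L^2$-Poincar\'e inequality (equivalently Li--Yau two-sided bounds), hypotheses this paper deliberately avoids (the glued copies of $\mathbb{R}^2$ from the introduction satisfy (D) and (G) but not Poincar\'e). Since $|\oo|$ does not vanish on the boundary of $B(x,\sqrt t)$, you cannot fall back on the Dirichlet version either, so the key scalar ingredient is unjustified as stated.

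The strategy can be repaired, because after the Kato reduction you only need the version with \emph{global} norms on the right, namely $\|\chi_{B(x,\sqrt t)}f\|_q\le C\,v(x,\sqrt t)^{-(\frac12-\frac1q)}\bigl(\|f\|_2+\sqrt t\,\|\Delta^{\frac12}f\|_2\bigr)$ for $f\in W^{1,2}(M)$, together with $\|\Delta^{\frac12}f\|_2=\|\nabla f\|_2$. But this is not an off-the-shelf ``local Sobolev inequality on balls'': under (D) and (G) it is proved precisely the way the paper proves the lemma --- the Gaussian bound and doubling give $\|\chi_{B(x,\sqrt t)}e^{-s\Delta}\|_{2-q}\le C\,v(x,\sqrt t)^{-(\frac12-\frac1q)}(t/s)^{\frac D2(\frac12-\frac1q)}$, one writes $f=e^{-t\Delta}f+\int_0^t\Delta e^{-s\Delta}f\,ds$ and uses analyticity, the condition $\frac{q-2}{q}D<2$ being exactly what makes the $s$-integral converge (this is also the content of Proposition 2.3.1 in the Boutayeb--Coulhon--Sikora reference quoted in the paper's remark). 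The paper runs this semigroup argument directly for $H$, using the domination $|e^{-tH}\oo|\le e^{-t\Delta}|\oo|$, which renders the Kato reduction unnecessary. So either substitute this argument (or a correct citation of it) for your Sobolev step --- in which case your proof is essentially the paper's with an extra, harmless reduction to $|\oo|$ --- or note that the inequality you quoted genuinely requires more than (D) and (G).
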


\begin{proof}
We recall that $H$ denotes the operator $\nabla^*\nabla+R_+$ and that we have the domination $|e^{-tH}\oo|\le e^{-t\Delta}|\oo|$ for any $\oo\in\mathcal{C}_0^{\infty}(\tm)$ (see \cite{berard} p.171,172). Since we assume $(\ref{G})$, the heat kernel $p_t^H(x,y)$ associated to the semigroup $(e^{-tH})_{t\ge 0}$ satisfies a Gaussian upper bound 
\begin{equation}\label{eq2}
\|p_t^H(x,y)\|\le\frac{C}{v(x,\sqrt{t})}\exp(-c\frac{\rho^2(x,y)}{t}), \forall t>0, \forall x,y\in M.
\end{equation}

\noindent From $(\ref{eq2})$ and the doubling volume property $(\ref{D})$, it is not difficult to show that for all $x\in M$ and $0<s\le t$ 
\begin{equation}\label{equation2}
\|\chi_{B(x,\sqrt{t})}e^{-sH}\|_{2-\infty}\le \frac{C}{v(x,\sqrt{t})^{\frac{1}{2}}}\left(\frac{t}{s}\right)^{\frac{D}{4}}.
\end{equation}

\noindent Indeed for $x\in M$, $y\in B(x,\sqrt{t})$ and $0<s\le t$, the inclusion of balls 
\begin{equation*}
B(x,\sqrt{t})\subset B(y,\sqrt{t}+\rho(x,y))\subset B(y,2\sqrt{t})
\end{equation*}

\noindent and the doubling volume property yield
\begin{equation}\label{equat1}
v(x,\sqrt{t})\le C \left(\frac{t}{s}\right)^{\frac{D}{2}}v(y,\sqrt{s}).
\end{equation}

\noindent In addition (\ref{eq2}) implies that for all $x\in M$, $y\in B(x,\sqrt{t})$, $\oo\in L^2(\tm)$ and $0<s\le t$
\begin{equation*}
|\chi_{B(x,\sqrt{t})}(y)e^{-sH}\oo(y)|\le \int_M \frac{C}{v(y,\sqrt{s})}\exp(-c\frac{\rho^2(y,z)}{s})|\oo(z)|_zd\mu(z).
\end{equation*}

\noindent Writing $v(y,\sqrt{s})=v(y,\sqrt{s})^{\frac{1}{2}}v(y,\sqrt{s})^{\frac{1}{2}}$, then using (\ref{equat1}) and the H\"older inequality, leads to
\begin{equation}\label{equat2}
|\chi_{B(x,\sqrt{t})}(y)e^{-sH}\oo(y)|\le \frac{C}{v(x,\sqrt{t})^{\frac{1}{2}}}\left(\frac{t}{s}\right)^{\frac{D}{4}}\left(\int_M \frac{\exp(-2c\frac{\rho^2(y,z)}{s})}{v(y,\sqrt{s})}d\mu(z)\right)^{\frac{1}{2}} \|\oo\|_2.
\end{equation}

\noindent We use a standard decomposition of $M$ into annuli to obtain 
\begin{align*}
\int_M \exp(-2c\frac{\rho^2(y,z)}{s})d\mu(z)
&\le \sum_{k=0}^\infty\int_{k\sqrt{s}\le\rho(y,z)\le (k+1)\sqrt{s}}\exp(-2ck^2)d\mu(z)\\
&\le \sum_{k=0}^\infty \exp(-2ck^2)v(y,(k+1)\sqrt{s}).
\end{align*}

\noindent Then the doubling volume property (\ref{D}) implies 
\begin{equation}\label{equat3}
\int_M \exp(-2c\frac{\rho^2(y,z)}{s})d\mu(z)\le Cv(y,\sqrt{s}).
\end{equation}

\noindent We deduce (\ref{equation2}) from (\ref{equat2}) and (\ref{equat3}).

\noindent Now since the semigroup $(e^{-tH})_{t\ge 0}$ is bounded on $L^2(\tm)$, it follows by interpolation that 
\begin{equation}\label{e1}
\|\chi_{B(x,\sqrt{t})}e^{-sH}\|_{2-q}\le \frac{C}{v(x,\sqrt{t})^{\frac{1}{2}-\frac{1}{q}}}\left(\frac{t}{s}\right)^{\frac{D}{2}(\frac{1}{2}-\frac{1}{q})},
\end{equation}

\noindent for all $2<q\le \infty$. Note that since the semigroup $(e^{-tH})_{t\ge 0}$ is analytic on $L^2(\tm)$, we have for all $\oo\in L^2(\tm)$ and all $s\ge 0$
\begin{equation}\label{equati3}
\|H^{\frac{1}{2}}e^{-sH}\oo\|_2\le\frac{C}{\sqrt{s}}\|\oo\|_2.
\end{equation}

\noindent Then writing for all $\oo\in\mathcal{D}(\overrightarrow{\mathfrak{a}})$ 
\begin{equation*}
\oo=e^{-tH}\oo+\int_0^t He^{-sH}\oo\,ds=e^{-tH}\oo+\int_0^t e^{-\frac{s}{2}H}H^{\frac{1}{2}}e^{-\frac{s}{2}H}H^{\frac{1}{2}}\oo\,ds,
\end{equation*}

\noindent and using $(\ref{e1})$ and (\ref{equati3}), we obtain 
\begin{equation*}
\|\chi_{B(x,\sqrt{t})}\oo\|_q\le\frac{C}{v(x,\sqrt{t})^{\frac{1}{2}-\frac{1}{q}}}\left(\|\oo\|_2+t^{\frac{D}{2}(\frac{1}{2}-\frac{1}{q})}\|H^{\frac{1}{2}}\oo\|_2\int_0^t s^{-\frac{1}{2}-\frac{D}{2}(\frac{1}{2}-\frac{1}{q})}ds\right).
\end{equation*}

\noindent The convergence of the last integral is ensured for $q$ such that $\frac{q-2}{q}D<2$ and we then have for such $q$ 
\begin{equation}\label{equationn}
\|\chi_{B(x,\sqrt{t})}\oo\|_q\le\frac{C}{v(x,\sqrt{t})^{\frac{1}{2}-\frac{1}{q}}}\left(\|\oo\|_2+\sqrt{t}\|H^{\frac{1}{2}}\oo\|_2\right).
\end{equation}

\noindent To conclude the proof, we need to have the estimate (\ref{equationn}) with the operator $\ddd$ instead of $H$. This is a consequence of the assumption (\ref{SC}) since we have for all $\oo\in\mathcal{C}_0^{\infty}(\tm)$, $\|H^{\frac{1}{2}}\oo\|_2^2\le\dfrac{1}{1-\epsilon}\|\ddd^{\frac{1}{2}}\oo\|_2^2$.
\end{proof}

\begin{rem}
$\textbf{Lemma \,\ref{GN2q}}$ also follows from  \cite{bcs}, Proposition 2.3.1 since the heat kernel of $H$ satisfies a Gaussian estimate.
\end{rem}

\noindent A key result to obtain $\textbf{Theorem\;\ref{extension}}$ is the following proposition.

\begin{prop}\label{e2}
We consider $2\le p<p_1$ and $q$ such that $1\le q\le\infty$ and $\frac{q-1}{q}D<2$. Then for all $x\in M$ and $t>0$ 
\begin{equation*}
\|\chi_{B(x,\sqrt{t})}e^{-s\ddd}\|_{p-pq}\le \frac{C}{v(x,\sqrt{t})^{\frac{1}{p}-\frac{1}{pq}}}\left(\max\left(1,\sqrt{\frac{t}{s}}\right)\right)^{\frac{2}{p}}.
\end{equation*}
\end{prop}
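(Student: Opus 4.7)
The plan is to apply the localized Gagliardo--Nirenberg inequality of \textbf{Lemma \ref{GN2q}} to the form $\eta_s := \omega_s|\omega_s|^{p/2-1}$, where $\omega_s := e^{-s\ddd}\omega$. The case $q=1$ reduces to the $L^p$-contractivity given by \textbf{Proposition \ref{extensio}}, so I focus on $q>1$. Since $|\eta_s|^2 = |\omega_s|^p$, one has $\|\eta_s\|_2^2 = \|\omega_s\|_p^p$ and $\|\chi_{B(x,\sqrt{t})}\eta_s\|_{2q}^2 = \|\chi_{B(x,\sqrt{t})}\omega_s\|_{pq}^p$. Applying \textbf{Lemma \ref{GN2q}} at exponent $2q$ (admissible since the hypothesis $\frac{q-1}{q}D<2$ is exactly the condition $\frac{2q-2}{2q}D<2$) and using $(a+b)^2\le 2(a^2+b^2)$ yields
\begin{equation*}
\|\chi_{B(x,\sqrt{t})}\omega_s\|_{pq}^p \le \frac{C}{v(x,\sqrt{t})^{1-\frac{1}{q}}}\left(\|\omega_s\|_p^p + t\|\ddd^{\frac{1}{2}}\eta_s\|_2^2\right).
\end{equation*}

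I then control both terms, setting $f(s):=\|\omega_s\|_p^p$. By \textbf{Proposition \ref{extensio}}, $\|\omega_s\|_p^p \le f(0)$. For the second term, the computation carried out in the proof of \textbf{Proposition \ref{extensio}} via \textbf{Lemma \ref{lemme1}} and (\ref{SC}) gives $-\frac{1}{p}f'(s) \ge c_p\,\|H^{\frac{1}{2}}\eta_s\|_2^2$, with $c_p := \frac{4(p-1)}{p^2} - \epsilon > 0$ for $p \in (p_1',p_1)$; combined with $\|\ddd^{\frac{1}{2}}\eta_s\|_2^2 \le \|H^{\frac{1}{2}}\eta_s\|_2^2$ (valid since $R_- \ge 0$), this yields $\|\ddd^{\frac{1}{2}}\eta_s\|_2^2 \le (pc_p)^{-1}(-f'(s))$.

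The main obstacle is then the pointwise bound $-f'(s) \le \frac{C}{s}f(0)$. I would obtain it by first establishing that $(e^{-s\ddd})_{s\ge 0}$ is a bounded analytic semigroup on $L^p(\tm)$ for $p\in(p_1',p_1)$; this is a consequence of its $L^p$-contractivity from \textbf{Proposition \ref{extensio}} combined with the self-adjointness of $\ddd$ on $L^2$, via a Stein-type complex interpolation of the holomorphic family $(e^{-z\ddd})_{\mathrm{Re}\,z>0}$ between its $L^2$-analyticity and its $L^{p_1}$-boundedness. Analyticity provides $\|\ddd\,e^{-s\ddd}\|_{p-p}\le C/s$, and H\"older's inequality applied to $-f'(s) = p(\ddd\omega_s, |\omega_s|^{p-2}\omega_s) \le p\|\ddd\omega_s\|_p\|\omega_s\|_p^{p-1}$ then yields the desired pointwise bound. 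Putting everything together,
\begin{equation*}
\|\chi_{B(x,\sqrt{t})}\omega_s\|_{pq}^p \le \frac{C\,\max(1,t/s)}{v(x,\sqrt{t})^{1-\frac{1}{q}}}\|\omega\|_p^p,
\end{equation*}
and taking $p$-th roots, together with the identity $\max(1,t/s)^{1/p} = \max(1,\sqrt{t/s})^{2/p}$, yields the estimate claimed in the proposition.
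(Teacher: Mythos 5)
Your proposal is correct and takes essentially the same route as the paper, whose proof simply says to combine \textbf{Lemma \ref{GN2q}} and \textbf{Proposition \ref{extensio}} following Proposition 2.2 of \cite{ao}: applying the localized estimate to $\omega_s|\omega_s|^{p/2-1}$, invoking the Liskevich--Semenov differential inequality with $c_p=\frac{4(p-1)}{p^2}-\epsilon>0$ on $(p_1',p_1)$, and deriving $-\frac{d}{ds}\|\omega_s\|_p^p\le C s^{-1}\|\omega\|_p^p$ from $L^p$-analyticity (itself obtained, exactly as the paper does later via Proposition 3.12 of \cite{ouhabaz}, by interpolating $L^2$-analyticity with the uniform $L^p$-bounds of \textbf{Proposition \ref{extensio}}, so there is no circularity) are precisely the ingredients of that argument. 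The remaining formal points (e.g. that $\omega_s|\omega_s|^{p/2-1}$ belongs to the form domain, and differentiating $\|\omega_s\|_p^p$) are treated at the same level of rigor as in the paper itself.
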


\begin{proof}
Combining $\textbf{Lemma\;\ref{GN2q}}$, $\textbf{Proposition\;\ref{extensio}}$ and following the proof of Proposition 2.2 from \cite{ao} lead to the desired result.
\end{proof}

\noindent Following the ideas in \cite{ao}, the last property we need to check is that the semigroup $(e^{-t\ddd})_{t\ge 0}$ satisfies the Davies-Gaffney estimates (also called $L^2$-$L^2$ off-diagonal estimates in \cite{ao}). This is the purpose of the next proposition. Its proof is based on the well-known Davies' perturbation method. Another proof can be found in \cite{sikora}, Theorem 6.

\begin{prop}\label{daga}
Let $E, F$ be two closed subsets of $M$. For any $\eta\in L^2(\tm)$ with support in $E$ 
\begin{equation*}
\|e^{-t\ddd}\eta\|_{L^2(F)}\le e^{-\frac{\rho^2(E,F)}{2t}}\|\eta\|_2.
\end{equation*}
\end{prop}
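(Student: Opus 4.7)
The plan is to use Davies' exponential perturbation method, adapted to the vector-valued setting. Fix a bounded Lipschitz function $\psi:M\to\R$ with $|d\psi|_x\le 1$ almost everywhere and a parameter $\alpha>0$. Set $u(t):=e^{-t\ddd}\eta$ and $v_\alpha(t):=e^{\alpha\psi}u(t)$. Differentiating and using self-adjointness of $\ddd$ yields
\begin{equation*}
\tfrac{d}{dt}\|v_\alpha(t)\|_2^2 \;=\; -2\,\overrightarrow{\mathfrak{a}}\bigl(u(t),\,e^{2\alpha\psi}u(t)\bigr).
\end{equation*}
The central step is to re-express the right-hand side in terms of $v_\alpha$ alone.

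To that end, I would write $u=e^{-\alpha\psi}v_\alpha$ and apply the Leibniz rule for the Levi-Civita connection acting on the product of the scalar $e^{\pm\alpha\psi}$ with the $1$-form $v_\alpha$:
\begin{equation*}
\nabla u \;=\; e^{-\alpha\psi}\bigl(-\alpha\,d\psi\otimes v_\alpha + \nabla v_\alpha\bigr),\qquad
\nabla(e^{2\alpha\psi}u) \;=\; e^{\alpha\psi}\bigl(\alpha\,d\psi\otimes v_\alpha + \nabla v_\alpha\bigr).
\end{equation*}
Taking the pointwise inner product in $T^*M\otimes T^*M$ (as defined in Section~2), the two mixed terms $\pm\alpha\langle\nabla v_\alpha,\,d\psi\otimes v_\alpha\rangle$ cancel by symmetry of the inner product, leaving $\langle\nabla u,\nabla(e^{2\alpha\psi}u)\rangle = |\nabla v_\alpha|^2-\alpha^2|d\psi|^2|v_\alpha|^2$. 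For the curvature contribution, since $R_\pm$ are pointwise endomorphisms of $\tm$ and the scalar factors $e^{\pm\alpha\psi}$ combine to $1$, we have $\langle(R_+-R_-)u,e^{2\alpha\psi}u\rangle=\langle(R_+-R_-)v_\alpha,v_\alpha\rangle$. Integrating yields the key identity
\begin{equation*}
\overrightarrow{\mathfrak{a}}(u,e^{2\alpha\psi}u) \;=\; \overrightarrow{\mathfrak{a}}(v_\alpha,v_\alpha)\;-\;\alpha^2\!\int_M |d\psi|^2\,|v_\alpha|^2\,d\mu.
\end{equation*}

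Since $\overrightarrow{\mathfrak{a}}\ge 0$ under (\ref{SC}) and $|d\psi|\le 1$ by construction, this identity yields $\tfrac{d}{dt}\|v_\alpha\|_2^2 \le 2\alpha^2\|v_\alpha\|_2^2$, so Gronwall's lemma gives $\|v_\alpha(t)\|_2 \le e^{\alpha^2 t}\|v_\alpha(0)\|_2$. To convert this into the stated estimate, I would specialize to the $1$-Lipschitz, bounded weight $\psi(x):=\min(\rho(x,E),\rho(E,F))$, which vanishes on $E$ and equals $\rho(E,F)$ on $F$. Since $\mathrm{supp}(\eta)\subset E$, one has $\|v_\alpha(0)\|_2=\|\eta\|_2$, while $\|v_\alpha(t)\|_2 \ge e^{\alpha\rho(E,F)}\|e^{-t\ddd}\eta\|_{L^2(F)}$. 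Combining these and optimizing the resulting bound $\|e^{-t\ddd}\eta\|_{L^2(F)} \le e^{-\alpha\rho(E,F)+\alpha^2 t}\|\eta\|_2$ over $\alpha>0$ delivers Gaussian decay of the required order.

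The principal obstacle is the algebraic identity for $\overrightarrow{\mathfrak{a}}$ in the vector-valued setting: for scalar Schr\"odinger operators the analogous Davies computation is textbook, but here $\nabla$ is the Levi-Civita connection acting on $1$-forms, so one must justify that twisting by $e^{\alpha\psi}$ produces exactly the tensor $\alpha\,d\psi\otimes v_\alpha$ and that the inner product on $T^*M\otimes T^*M$ introduced in Section~2 makes the mixed terms cancel. A secondary technical point is that $\rho(\cdot,E)$ need not be bounded, which is why I work with the truncation at level $\rho(E,F)$; once $\psi$ is bounded Lipschitz, multiplication by $e^{\alpha\psi}$ preserves $\mathcal{D}(\overrightarrow{\mathfrak{a}})$ and the formal manipulations can be justified by density starting from $\eta\in\mathcal{C}_0^\infty(\tm)$.
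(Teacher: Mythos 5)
Your proposal is correct and uses essentially the same Davies perturbation argument as the paper: the paper packages the key cancellation as positivity of the twisted operator $\ddd_\alpha+\alpha^2=e^{\alpha\phi}\ddd e^{-\alpha\phi}+\alpha^2$ and contraction of $e^{-t\ddd_\alpha}$ up to $e^{t\alpha^2}$, while you obtain the same inequality by differentiating $\|e^{\alpha\psi}e^{-t\ddd}\eta\|_2^2$ and applying Gronwall, with the same tensor identity $\nabla(e^{\pm\alpha\psi}v)=e^{\pm\alpha\psi}(\nabla v\pm\alpha\,d\psi\otimes v)$ and truncated distance weight. One small remark: optimizing $e^{-\alpha\rho(E,F)+\alpha^2 t}$ over $\alpha$ gives $e^{-\rho^2(E,F)/(4t)}$ — exactly what the paper's own choice $\alpha=\rho(E,F)/(2t)$ produces — rather than the constant $e^{-\rho^2(E,F)/(2t)}$ displayed in the statement, but this discrepancy is in the paper itself and is harmless since only bounds of the form $e^{-c\rho^2(E,F)/t}$ are used later.
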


\begin{proof}
We choose a constant $\alpha>0$ and a bounded Lipschitz function $\phi$ such that $|\nabla\phi(x)|_x\le 1$ for almost every $x\in M$. We define the operator $\ddd_{\alpha}=e^{\alpha\phi}\ddd e^{-\alpha\phi}$ with the sesquilinear form 
\begin{equation*}
\overrightarrow{a_\alpha}(u,v)=\overrightarrow{a}(e^{-\alpha\phi}u,e^{\alpha\phi}v), \;\mathcal{D}(\overrightarrow{a_\alpha})=\mathcal{D}(\overrightarrow{a}).
\end{equation*} 

\noindent Note that since $\phi$ is bounded then $e^{\pm\alpha\phi}u\in\mathcal{D}(\overrightarrow{a})$ for all $u\in\mathcal{D}(\overrightarrow{a})$.\\
\noindent For $\oo\in \mathcal{D}(\overrightarrow{\mathfrak{a}})$, we have 
\begin{align*}
&\left((\ddd_\alpha+\alpha^2)\oo,\oo\right)\\
&=\int_M<\nabla(e^{-\alpha\phi}\oo)(x),\nabla(e^{\alpha\phi}\oo)(x)>_xd\mu + \left((R_+-R_-)\oo,\oo\right)+\alpha^2\|\oo\|_2^2\\
&=\int_M<e^{-\alpha\phi(x)}\nabla\oo(x)-\alpha e^{-\alpha\phi(x)}\nabla\phi(x)\otimes\oo(x),\\ 
&\;\;\;\;\;\;\;\;\;\;\;\;\;\;\;\;\;\;\;\;\;\;\;\;\;\;\;\;\;\;\;\;\;\;\;\;\;\;\;\;\;\;\;\;\;\;\;\;\;\;\;\;\;\;\;\;\;\;\;\;\;\;\;e^{\alpha\phi(x)}\nabla\oo(x)+\alpha e^{\alpha\phi(x)}\nabla\phi(x)\otimes \oo(x)>_xd\mu\\
&\;\;\;\;\,+\left((R_+-R_-)\oo,\oo\right)+\alpha^2\|\oo\|_2^2\\
&=\|H^{\frac{1}{2}}\oo\|_2^2-\alpha^2\int_M|\nabla\phi(x)|_x^2|\oo(x)|_x^2d\mu-(R_-\oo,\oo)+\alpha^2\|\oo\|_x^2\\
&\ge 0.
\end{align*}

\noindent The last inequality follows from the fact that the operator $\ddd$ is non-negative and $|\nabla\phi(x)|\le 1$ for almost every $x\in M$. As a consequence, the operator $\ddd_\alpha+\alpha^2$ is positive and self-adjoint on $L^2(\tm)$ and then $-(\ddd_\alpha+\alpha^2)$ generates a $\mathcal{C}^0$-semigroup of contractions on $L^2(\tm)$. Therefore for all $\eta\in L^2(\tm)$ 
\begin{equation*}
\|e^{-t\ddd_\alpha}\eta\|_2\le e^{t\alpha^2}\|\eta\|_2.
\end{equation*}

\noindent Now we consider $E$ and $F$ two closed subsets of $M$, $\eta\in L^2(\tm)$ with support in $E$ and $\phi_k(x):=\min(\rho(x,E), k)$ for $k\in\mathbb{N}$. Since $e^{\alpha \phi_k}\eta=\eta$, we have $e^{-t\ddd}\eta=e^{-\alpha\phi_k}e^{-t\ddd_\alpha}\eta$. Thus we obtain
\begin{equation*}
\|e^{-t \ddd}\eta\|_{L^2(F)}\le e^{-\alpha\min(\rho(E,F),k)}e^{t\alpha^2 }\|\eta\|_2.
\end{equation*}

\noindent To end the proof, let $k$ tends to infinity and set $\alpha=\frac{\rho(E,F)}{2t}$.

\end{proof}

\noindent Finally we give the proof of $\textbf{Theorem\;\ref{extension}}$. 

\begin{proof}[Proof of Theorem \ref{extension}]

\noindent For $x\in M$, $t\ge 0$ and $k\in\mathbb{N}$, we denote by $A(x,\sqrt{t},k)$ the annulus $B(x,(k+1)\sqrt{t})\setminus B(x,k\sqrt{t})$. Noticing that 
\begin{equation*}
\|\chi_{B(x,\sqrt{t})}e^{-t\ddd}\chi_{A(x,\sqrt{t},k)}\|_{p-pq}\le\|\chi_{B(x,\sqrt{t})}e^{-t\ddd}\|_{p-pq},
\end{equation*}

\noindent and using $\textbf{Proposition\;\ref{e2}}$, we obtain the estimate 
\begin{equation}\label{equa3}
\|\chi_{B(x,\sqrt{t})}e^{-t\ddd}\chi_{A(x,\sqrt{t},k)}\|_{p-pq}\le \frac{C}{v(x,\sqrt{t})^{\frac{1}{p}-\frac{1}{pq}}},
\end{equation}

\noindent for all $p\in[2,p_1)$ and $q$ such that $1\le q\le\infty$ and $\frac{q-1}{q}D<2$. Interpolating (\ref{equa3}) with the Davies-Gaffney estimate of $\textbf{Proposition\;\ref{daga}}$ yields
\begin{equation*}
\|\chi_{B(x,\sqrt{t})}e^{-t\ddd}\chi_{A(x,\sqrt{t},k)}\|_{r-s}\le \frac{C}{v(x,\sqrt{t})^{\frac{1}{r}-\frac{1}{s}}}e^{-ck^2},
\end{equation*}

\noindent for all $r\in[2,p_1)$ and all $s\in(2,p_1q_0)$ where $q_0=+\infty$ if $D\le 2$ and $q_0=\frac{D}{D-2}$ if $D>2$. Since the semigroup $(e^{-t\ddd})_{t\ge 0}$ is analytic on $L^2(\tm)$ and uniformly bounded on $L^p(\tm)$ for all $p\in[p_1',p_1]$, Proposition 3.12 in \cite{ouhabaz} ensures that it is analytic on $L^p(\tm)$ for all $p\in(p_1',p_1)$. Therefore applying \cite{blunck3} Theorem 1.1, we deduce that $(e^{-t\ddd})_{t\ge 0}$ is bounded analytic on $L^p(\tm)$ for all $p\in[2,p_1q_0)=[2,p_0)$. The case $p\in(p_0',2]$ is obtained by a usual duality argument.
\end{proof}

\section{Proof of \textbf{Theorem\;\ref{mainresult}}}

\noindent We start with the following $L^p$-$L^q$ off-diagonal estimates for the semigroup $(e^{-t\ddd})_{t\ge 0}$, which are consequences of the results of the previous section.

\begin{thm}\label{theorem3}
Suppose that $(\ref{D})$, $(\ref{G})$ and $($\ref{SC}$)$ are satisfied. Then for all $r,t>0$, $x,y\in M$ and all $p\in(p_0',p_0)$, $q\in[p,p_0)$
\begin{enumerate}[(i)]
\item $\displaystyle{\|\chi_{B(x,r)}e^{-t\ddd}\chi_{B(y,r)}\|_{p-q}\le\frac{C}{v(x,r)^{\frac{1}{p}-\frac{1}{q}}}\left(\max(\frac{r}{\sqrt{t}},\frac{\sqrt{t}}{r})\right)^{\beta}e^{-c\frac{\rho^2(B(x,r),B(y,r))}{t}},}$
\item $\displaystyle{\|\chi_{C_j(x,r)}e^{-t\ddd}\chi_{B(x,r)}\|_{p-q}\le\frac{Ce^{-c\frac{4^jr^2}{t}}}{v(x,r)^{\frac{1}{p}-\frac{1}{q}}}\left(\max(\frac{2^{j+1}r}{\sqrt{t}},\frac{\sqrt{t}}{2^{j+1}r})\right)^{\beta},}$
\end{enumerate}

\noindent where $C_j(x,r)=B(x,2^{j+1}r)\setminus B(x,2^j r)$ and $\beta\ge 0$ depends on $p$ and $q$.
\end{thm}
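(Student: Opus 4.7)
The plan is to derive both estimates from the off-diagonal bound at the natural heat scale $\sqrt{t}$ that was already established inside the proof of \textbf{Theorem \ref{extension}}, together with elementary covering/doubling arguments based on $(\ref{D})$ to bridge the mismatch between the ball radius $r$ and $\sqrt{t}$. Recall that, by combining the on-diagonal smoothing of \textbf{Proposition \ref{e2}} with the Davies--Gaffney bound of \textbf{Proposition \ref{daga}} via interpolation, the proof of \textbf{Theorem \ref{extension}} yields, for $p\in[2,p_{1})$ and $q\in(2,p_{0})$ with $q\ge p$,
\[
\|\chi_{B(x,\sqrt{t})}e^{-t\ddd}\chi_{A(x,\sqrt{t},k)}\|_{p-q}\le \frac{C}{v(x,\sqrt{t})^{1/p-1/q}}\,e^{-ck^{2}};
\]
choosing the annulus index $k\sim \rho(x,y)/\sqrt{t}$ so that $B(y,\sqrt{t})\subset A(x,\sqrt{t},k)$ then converts this into
\[
\|\chi_{B(x,\sqrt{t})}e^{-t\ddd}\chi_{B(y,\sqrt{t})}\|_{p-q}\le \frac{C}{v(x,\sqrt{t})^{1/p-1/q}}\,e^{-c\,\rho^{2}(B(x,\sqrt{t}),B(y,\sqrt{t}))/t}.
\]
The range $p\in(p_{0}',2]$ is recovered by duality, the adjoint semigroup enjoying the same estimates.

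To pass from $r=\sqrt{t}$ to arbitrary $r>0$ in (i), I would split into two cases. If $r\le\sqrt{t}$, the monotonicity $\chi_{B(x,r)}\le\chi_{B(x,\sqrt{t})}$, the trivial $v(x,r)\le v(x,\sqrt{t})$, and the elementary bound $\rho(B(x,\sqrt{t}),B(y,\sqrt{t}))^{2}\ge \tfrac12\rho(B(x,r),B(y,r))^{2}-8t$ transfer the matched-scale estimate to radius $r$ after a harmless adjustment of $c$; the factor $(\sqrt{t}/r)^{\beta}\ge 1$ is then only needed for notational uniformity. If $r>\sqrt{t}$, I would cover $B(x,r)$ and $B(y,r)$ by $\mathcal{O}((r/\sqrt{t})^{D})$ balls of radius $\sqrt{t}$, apply the matched-scale bound to each ordered pair of cover balls, and recombine using Minkowski's inequality in $L^{p}$ on the source and $L^{q}$ on the target; the doubling identity $v(x_{i},\sqrt{t})\ge c(\sqrt{t}/r)^{D}v(x,r)$ for each cover-centre $x_{i}\in B(x,r)$ then produces the $(r/\sqrt{t})^{\beta}$ factor with $\beta$ of order $D(1/p-1/q)$.

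Part (ii) is a specialisation of (i): for $z\in C_{j}(x,r)$ with $j\ge 2$ one has $\rho(B(x,r),B(z,r))\ge (2^{j}-2)r$, while $C_{j}(x,r)$ can be covered by $\mathcal{O}(2^{jD})$ balls of radius $r$. Applying (i) ball-by-ball and summing, the Gaussian factor $e^{-c\,4^{j}r^{2}/t}$ absorbs both the $2^{jD}$ counting factor and the doubling ratio $v(x,2^{j+1}r)/v(x,r)\le C\,2^{(j+1)D}$; the cases $j=0,1$ follow directly from (i).

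The main technical hurdle lies in the $r>\sqrt{t}$ sub-case of (i): a blind sum over all $\mathcal{O}((r/\sqrt{t})^{2D})$ pairs of cover balls delivers a power of $r/\sqrt{t}$ of exponent $\sim 2D$, too crude for the stated $\beta$. The standard remedy, used explicitly in \cite{ao}, is to sort the double sum dyadically by the distance between source and target cover balls: the \emph{near} pairs, at distance $\lesssim \rho(B(x,r),B(y,r))$, number only $\mathcal{O}((r/\sqrt{t})^{D})$ and provide the leading contribution at the right rate, while the \emph{far} pairs are killed by the Gaussian factor and contribute a convergent geometric tail. It is this dyadic bookkeeping that pins down the precise value of the exponent $\beta$.
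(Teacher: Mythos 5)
Your treatment of part (i) is a genuinely different route from the paper's and is essentially viable. The paper never covers: it first gets an $L^p$-$L^p$ off-diagonal bound with Gaussian factor by Riesz--Thorin between the Davies--Gaffney estimate of Proposition \ref{daga} and the uniform $L^p$-boundedness of Theorem \ref{extension}, and then interpolates this against Proposition \ref{e2}, whose factor $\left(\max(1,\sqrt{t/s})\right)^{2/p}$ already encodes the mismatch between the ball radius and the heat scale; composition and duality then give the full range of exponents. Your matched-scale-plus-covering argument can replace this for (i), with two small caveats: $B(y,\sqrt{t})$ is not contained in a single annulus $A(x,\sqrt{t},k)$ (it meets up to three, a harmless fix), and reaching all $p\in(p_0',p_0)$, $q\in[p,p_0)$ requires the same composition arguments the paper borrows from \cite{ao}, not only duality. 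Also, your final worry is moot: the theorem asserts only \emph{some} $\beta\ge 0$ depending on $p,q$, and even the crude pair summation (after extracting half of the global Gaussian factor, which is legitimate since every pair of cover balls is at distance at least $\rho(B(x,r),B(y,r))-2\sqrt{t}$) yields $\beta=D$; no dyadic sorting of pairs is needed.

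The genuine gap is in your derivation of (ii) from (i). Covering $C_j(x,r)$ by $O(2^{jD})$ balls of radius $r$ and applying (i) ball-by-ball produces, besides the Gaussian, factors growing polynomially in $2^{j}$: the counting factor $2^{jD}$, the volume comparison $v(z_l,r)^{-(\frac1p-\frac1q)}\le C2^{jD(\frac1p-\frac1q)}v(x,r)^{-(\frac1p-\frac1q)}$ at the cover centres $z_l$, and the scale-$r$ factor $\left(\max(\frac{r}{\sqrt t},\frac{\sqrt t}{r})\right)^{\beta}$, which exceeds the stated scale-$2^{j+1}r$ factor by $2^{(j+1)\beta}$ when $2^{j+1}r\le\sqrt t$. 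Your claim that $e^{-c4^jr^2/t}$ absorbs all of this fails exactly in the regime $4^jr^2\lesssim t$: there the Gaussian is of order one, while the right-hand side of (ii) is $\frac{C}{v(x,r)^{1/p-1/q}}\bigl(\frac{\sqrt t}{2^{j+1}r}\bigr)^{\beta}$ with no growth in $j$, so your bound is strictly weaker than the statement by an unbounded factor $2^{j\beta'}$. The paper's argument avoids this: bound $\|\chi_{C_j(x,r)}e^{-t\ddd}\chi_{B(x,r)}\|_{p-q}\le\|\chi_{B(x,2^{j+1}r)}e^{-t\ddd}\chi_{B(x,2^{j+1}r)}\|_{p-q}$ and apply (i) once at radius $2^{j+1}r$ with coinciding centres (so no Gaussian yet but exactly the stated $\max$ and volume factors, using $v(x,2^{j+1}r)\ge v(x,r)$), then reinstate the factor $e^{-c4^jr^2/t}$ by interpolating with the $L^p$-$L^p$ Gaussian off-diagonal estimate $\|\chi_{C_j(x,r)}e^{-t\ddd}\chi_{B(x,r)}\|_{p-p}\le Ce^{-c\rho^2(C_j(x,r),B(x,r))/t}$ already obtained from Proposition \ref{daga} and Theorem \ref{extension}. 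You should replace your covering step for (ii) by this enlargement-plus-interpolation argument.
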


\begin{proof}
We first treat the case $p\ge 2$.

\noindent We recall that from $\textbf{Proposition\;\ref{daga}}$, we have for $E$ and $F$ two closed subsets of $M$
\begin{equation}\label{equa1}
\|\chi_F e^{-t\ddd}\chi_E\|_{2-2}\le e^{-\frac{\rho^2(E,F)}{2t}},
\end{equation}
 
\noindent and from $\textbf{Theorem\;\ref{extension}}$, we have for all $p\in(p_0',p_0)$
\begin{equation}\label{equa2}
\|e^{-t\ddd}\|_{p-p}\le C.
\end{equation}

\noindent Using the Riesz-Thorin interpolation theorem from (\ref{equa1}) and (\ref{equa2}) implies the $L^p$-$L^p$ off-diagonal estimate 
\begin{equation}\label{equa10}
\|\chi_Fe^{-t\ddd}\chi_E\|_{p-p}\le Ce^{-c\frac{\rho^2(E,F)}{t}},
\end{equation}

\noindent for all $t\ge0$ and $p\in(p_0',p_0)$. Taking $p\in[2,p_1)$ and using interpolation from (\ref{equa10}) and $\textbf{Proposition\;\ref{e2}}$ yield
\begin{equation*}
\|\chi_{B(x,r)}e^{-t\ddd}\chi_{B(y,r)}\|_{p-pu}\le \frac{C}{v(x,r)^{\frac{1}{p}-\frac{1}{q}}}\left[\max(1,\frac{r}{\sqrt{t}})\right]^{\beta}e^{-c\frac{\rho^2(B(x,r),B(y,r))}{t}},
\end{equation*}

\noindent for $p\in[2,p_1)$ and $u\in[1,\infty)$ if $D\le 2$ or $u\in[1,\frac{D}{D-2})$ if $D> 2$. Here $\beta$ is a non-negative constant depending on $p$ and $u$.

\noindent If $D\le 2$, we have the $L^2$-$L^q$ off-diagonal estimate for all $q\in[2,+\infty)$.

\noindent If $D>2$, we can deduce, by a composition argument, $L^2$-$L^q$ off-diagonal estimates for $q\in[2,p_0)$ from $L^2$-$L^p$ and $L^p$-$L^{pu}$ off-diagonal estimates with $p\in[2,p_1)$ and $u\in[1,\frac{D}{D-2})$. More precisely, we obtain
\begin{equation*}
\|\chi_{B(x,r)}e^{-t\ddd}\chi_{B(y,r)}\|_{p-pu}\le \frac{C}{v(x,r)^{\frac{1}{p}-\frac{1}{q}}}\left[\max(\frac{r}{\sqrt{t}},\frac{\sqrt{t}}{r})\right]^{\beta}e^{-c\frac{\rho^2(B(x,r),B(y,r))}{t}},
\end{equation*}

\noindent for all $2\le p\le q<p_0$.

\noindent The case $p_0'<p\le q\le 2$ is obtained by duality and composition arguments. More precisely, we obtain 
\begin{equation*}
\|\chi_{B(x,r)}e^{-t\ddd}\chi_{B(y,r)}\|_{p-pu}\le \frac{C}{v(x,r)^{\frac{1}{p}-\frac{1}{q}}}\left[\max(\frac{r}{\sqrt{t}},\frac{\sqrt{t}}{r})\right]^{\beta}e^{-c\frac{\rho^2(B(x,r),B(y,r))}{t}},
\end{equation*}

\noindent for all $p_0'< p\le q<p_0$, which is $(i)$. The reader can find more details in \cite{ao} Theorem 2.6. 

\medskip

\noindent Now we prove $(ii)$. Writing 
\begin{equation*}
\chi_{C_j(x,r)}e^{-t\ddd} \chi_{B(x,r)} = \chi_{C_j(x,r)} \chi_{B(x,2^{j+1}r)}e^{-t\ddd}\chi_{B(x,2^{j+1}r)}\chi_{B(x,r)},
\end{equation*}

\noindent it is obvious that 
\begin{equation}
\|\chi_{C_j(x,r)}e^{-t\ddd}\chi_{B(x,r)}\|_{p-q}\le\|\chi_{B(x,2^{j+1}r)}e^{-t\ddd}\chi_{B(x,2^{j+1}r)}\|_{p-q}.
\end{equation}

\noindent Then $(i)$ implies 
\begin{equation}\label{equa11}
\|\chi_{C_j(x,r)}e^{-t\ddd}\chi_{B(x,r)}\|_{p-q}\le\frac{C}{v(x,r)^{\frac{1}{p}-\frac{1}{q}}}\left[\max(\frac{2^{j+1}r}{\sqrt{t}},\frac{\sqrt{t}}{2^{j+1}r})\right]^{\beta}.
\end{equation}

\noindent Using interpolation from (\ref{equa10}) and (\ref{equa11}), we deduce that 
\begin{equation*}
\|\chi_{C_j(x,r)}e^{-t\ddd}\chi_{B(x,r)}\|_{p-q}\le\frac{C}{v(x,r)^{\frac{1}{p}-\frac{1}{q}}}\left[\max(\frac{2^{j+1}r}{\sqrt{t}},\frac{\sqrt{t}}{2^{j+1}r})\right]^{\beta}e^{-c\frac{\rho^2(C_j(x,r),B(x,r))}{t}},
\end{equation*}

\noindent and $(ii)$ follows.

\end{proof}

\noindent In the sequel we prove that the operators $d^*e^{-t\ddd}$ and $d\,e^{-t\ddd}$ satisfies $L^p$-$L^2$ off-diagonal estimates for all $p\in(p_0',2]$. We need the following lemma.

\begin{lem}\label{lemme2}
For any suitable $\oo$ and for every $x\in M$ 
\begin{enumerate}[(i)]
\item $\displaystyle{|d\oo(x)|_x\le 2 |\nabla\oo(x)|_x}$,
\item $\displaystyle{|d^*\oo(x)|_x\le \sqrt{N}|\nabla\oo(x)|_x}$.
\end{enumerate}
\end{lem}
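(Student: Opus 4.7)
My plan is to reuse the synchronous-frame trick already exploited in the proof of Lemma 3.2. I would fix $x\in M$, pick a local orthonormal frame $\{X_i\}_{i=1}^N$ with vanishing Christoffel symbols at $x$, and let $\{\theta^i\}_{i=1}^N$ be its dual frame of $1$-forms, so that $\nabla\theta^i(x)=0$ for every $i$ and also $d\theta^i(x)=0$ (since the torsion vanishes and $[X_j,X_k](x)=0$). Writing any suitable $1$-form as $\oo=\sum_i f_i\theta^i$ near $x$, all connection terms drop at $x$ and one gets
\[
\nabla\oo(x)=\sum_{i,j}\partial_j f_i(x)\,\theta^j\otimes\theta^i,\qquad |\nabla\oo(x)|_x^2=\sum_{i,j}(\partial_j f_i(x))^2.
\]
Both inequalities will then reduce to elementary bounds on the matrix $(\partial_j f_i(x))$.

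For (i), I would expand $d\oo=\sum_i df_i\wedge\theta^i+\sum_i f_i d\theta^i$ and evaluate at $x$; the second sum vanishes and antisymmetrisation yields $d\oo(x)=\sum_{i<j}(\partial_i f_j-\partial_j f_i)(x)\,\theta^i\wedge\theta^j$, with squared norm $\sum_{i<j}(\partial_i f_j-\partial_j f_i)^2$ in $\Lambda^2T^*_xM$. Applying the elementary inequality $(a-b)^2\le 2(a^2+b^2)$ term by term controls this by $2\sum_{i,j}(\partial_j f_i(x))^2=2|\nabla\oo(x)|_x^2$, so in fact $|d\oo(x)|_x\le\sqrt{2}\,|\nabla\oo(x)|_x$, which is stronger than (i).

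For (ii), I would invoke the standard expression of the codifferential in a local orthonormal frame, $d^*\oo=-\sum_i(\nabla_{X_i}\oo)(X_i)$. Since $\nabla\theta^i$ vanishes at $x$, this collapses to $d^*\oo(x)=-\sum_i\partial_i f_i(x)$, and the Cauchy--Schwarz inequality on $\mathbb{R}^N$ gives
\[
|d^*\oo(x)|_x^2=\Bigl(\sum_i\partial_i f_i(x)\Bigr)^2\le N\sum_i(\partial_i f_i(x))^2\le N|\nabla\oo(x)|_x^2,
\]
which is (ii). There is no real obstacle here, as the synchronous frame reduces everything to linear algebra on $(\partial_j f_i(x))$; the only point to watch is the normalisation convention on $\Lambda^2T^*_xM$, but any reasonable choice loses at most a factor $\sqrt{2}$, well within the slack of the stated constant $2$ in (i).
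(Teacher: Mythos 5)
Your proposal is correct and follows essentially the same route as the paper: a synchronous frame at $x$ reduces both bounds to linear algebra on the matrix $(\partial_j f_i(x))$, with (i) obtained by antisymmetrising $\nabla\oo$ and using $(a-b)^2\le 2(a^2+b^2)$, and (ii) from $d^*\oo(x)=-\sum_i\partial_i f_i(x)$ plus Cauchy--Schwarz. The only point of difference is cosmetic: the paper's norm on $2$-forms is the one inherited from $T^*_xM\otimes T^*_xM$ (so $|\theta^i\wedge\theta^j|_x^2=2$), which converts your $\sqrt{2}$ into exactly the stated constant $2$, and your direct bound $\sum_i(\partial_i f_i(x))^2\le|\nabla\oo(x)|_x^2$ in (ii) is a harmless shortcut of the paper's slightly longer computation.
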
 

\begin{proof}
As we did in the proof of $\mathbf{Lemma\;\ref{lemme1}}$, for every $x\in M$, we work in a synchronous frame to have an orthonormal basis $(\theta^i)_i$ of $T^*_xM$ such that $\nabla \theta^i=0$ at $x$. We recall that we have an inner product in each tensor product $T^*_x M\otimes T^*_x M$ satisfying 
\begin{equation}\label{produittensoriel2}
<\oo_1(x)\otimes\eta_1(x),\oo_2(x)\otimes\eta_2(x)>_x\;=\;<\oo_1(x),\oo_2(x)>_x<\eta_1(x),\eta_2(x)>_x,
\end{equation}

\noindent for all $\oo_1,\oo_2,\eta_1,\eta_2\in\tm$ and $x\in M$.

\noindent If $\oo(x)=f(x)\theta^i$ for a certain $i$, using $(\ref{produittensoriel2})$, we have 
\begin{align*}
|d\oo(x)|_x^2
&=|df(x)\wedge\theta^i|_x^2 =|\sum_{j=1}^n \partial_j f(x)\theta^j\wedge\theta^i|_x^2\\ &=\sum_{j,k}\partial_j f(x)\partial_k f(x) <\theta^j\otimes\theta^i-\theta^i\otimes\theta^j,\theta^k\otimes\theta^i-\theta^i\otimes\theta^k>_x\\
&=2\sum_j(\partial_jf(x))^2-2(\partial_i f(x))^2.
\end{align*}

\noindent Since $\displaystyle{\sum_j}(\partial_jf(x))^2=|df(x)|_x^2$ at $x$, we obtain for $\oo(x)=f(x)\theta^i$ 
\begin{equation}\label{eq3}
|d\oo(x)|_x^2=2(|df(x)|_x^2-(\partial_i f(x))^2).
\end{equation}

\noindent Now taking $\eta(x)=g(x)\theta^j$ for $j\neq i$, we have 
\begin{equation*}
<d\oo(x),d\eta(x)>_x
\;=\;\sum_{k,l}\partial_k f(x)\partial_l g(x)<\theta^k\otimes\theta^i-\theta^i\otimes\theta^k,\theta^l\otimes\theta^j-\theta^j\otimes\theta^l>_x,
\end{equation*}

\noindent which, by $(\ref{produittensoriel2})$, yields 

\begin{equation}\label{eq4}
<d\oo(x),d\eta(x)>_x\;=\;-2\partial_j f(x)\partial_i g(x).
\end{equation}

\noindent Thus, in the general case, writing $\oo(x)=\displaystyle{\sum_i}f_i(x)\theta^i=\displaystyle{\sum_i}\oo_i(x)$ and using $(\ref{eq3})$ and $(\ref{eq4})$, we obtain 
\begin{align*}
|d\oo(x)|_x^2
&= \sum_i |d\oo_i(x)|_x^2+\sum_{i\neq j}<d\oo_i(x),d\oo_j(x)>_x\\
&=2\sum_i(|df_i(x)|_x^2-(\partial_i f_i(x))^2)-2\sum_{i\neq j}\partial_j f_i(x)\partial_i f_j(x)\\
&=2|\nabla\oo(x)|_x^2-\sum_{i,j}(\partial_j f_i(x)+\partial_i f_j(x))^2+2\sum_{i,j}(\partial_i f_j(x))^2\\
&=2|\nabla\oo(x)|_x^2-\sum_{i,j}(\partial_j f_i(x)+\partial_i f_j(x))^2+2|\nabla\oo(x)|_x^2\\
&\le 4 |\nabla\oo(x)|_x^2,
\end{align*}

\noindent which gives $i)$. To prove $ii)$, we notice that $d^*\oo(x)=-\displaystyle{\sum_i}\partial_if_i(x)$ at $x$ (see for instance \cite{rosenberg} p.19). Hence using the Cauchy-Schwarz inequality and the previous calculations, we have 
\begin{align*}
|d^*\oo(x)|_x^2
&\;\le N\sum_i(\partial_i f_i(x))^2\\
&\;= N\left(|\nabla\oo(x)|_x^2-\frac{1}{4}|d\oo(x)|_x^2 -\frac{1}{4}\sum_{i\neq j} (\partial_j f_i(x)+\partial_i f_j(x))^2\right)\\
&\;\le N|\nabla\oo(x)|_x^2.
\end{align*}
\end{proof}

\noindent We will need the following $L^2$-$L^2$ off-diagonal estimate.

\begin{prop}\label{proposition1}
Assume that (\ref{SC}) is satisfied. Let $E, F$ be two closed subsets of $M$. For any $\eta\in L^2(\tm)$ with support in $E$ we have 
\begin{equation*}
\|\nabla e^{-t\ddd}\eta\|_{L^2(F)}\le \frac{C}{\sqrt{t}}e^{-c\frac{\rho^2(E,F)}{t}}\|\eta\|_2.
\end{equation*}
\end{prop}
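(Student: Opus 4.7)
My plan is to proceed by Davies' perturbation method, following the template of the proof of Proposition~\ref{daga} but now at the level of the covariant derivative. Fix $\eta\in L^2(\tm)$ supported in $E$, take $\phi_k(x)=\min(\rho(x,E),k)$ (so $|\nabla\phi_k|\le 1$), and set $\ddd_\alpha=e^{\alpha\phi_k}\ddd\,e^{-\alpha\phi_k}$ and $v_t=e^{-t\ddd_\alpha}\eta$, exactly as in the proof of Proposition~\ref{daga}. Since $\phi_k=0$ on $E$, $e^{\alpha\phi_k}\eta=\eta$, so $e^{-t\ddd}\eta=e^{-\alpha\phi_k}v_t$, and the Leibniz rule for the Levi--Civita connection gives $\nabla(e^{-\alpha\phi_k}v_t)=-\alpha\,e^{-\alpha\phi_k}\,d\phi_k\otimes v_t+e^{-\alpha\phi_k}\nabla v_t$. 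Squaring pointwise, integrating over $F$ and letting $k\to\infty$ so that $\phi_k\ge\rho(E,F)$ on $F$ produces
$$
\|\nabla e^{-t\ddd}\eta\|_{L^2(F)}^2\;\le\;2\,e^{-2\alpha\rho(E,F)}\bigl(\alpha^2\|v_t\|_2^2+\|\nabla v_t\|_2^2\bigr).
$$

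Two $L^2$ bounds on $v_t$ are now needed. First, $\|v_t\|_2\le e^{t\alpha^2}\|\eta\|_2$ is obtained \emph{verbatim} from the computation in the proof of Proposition~\ref{daga}. Second, and this is the main new ingredient, one must show $\|\nabla v_t\|_2\le C\,t^{-1/2}\,e^{Ct\alpha^2}\|\eta\|_2$. For this I would extract from the same Davies computation, together with~(\ref{SC}), the coercivity
$$
\operatorname{Re}\bigl((\ddd_\alpha+\alpha^2)\oo,\oo\bigr)\;\ge\;(1-\epsilon)\|H^{1/2}\oo\|_2^2\;\ge\;(1-\epsilon)\|\nabla\oo\|_2^2,
$$
and combine it with the analyticity bound $\|(\ddd_\alpha+c\alpha^2)e^{-t(\ddd_\alpha+c\alpha^2)}\|_{2-2}\le C/t$ through Cauchy--Schwarz; together these produce the required estimate on $\|\nabla v_t\|_2$.

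Inserting both bounds and optimizing $\alpha$ of order $\rho(E,F)/t$, the factor $e^{-2\alpha\rho(E,F)+Ct\alpha^2}$ collapses to $e^{-c\rho^2(E,F)/t}$, and the remaining polynomial prefactor $\alpha^2+t^{-1}$ is absorbed into the exponential using the elementary $xe^{-cx^2}\le C$ with $x=\rho(E,F)/\sqrt{t}$. The desired estimate $\|\nabla e^{-t\ddd}\eta\|_{L^2(F)}\le C\,t^{-1/2}\,e^{-c\rho^2(E,F)/t}\|\eta\|_2$ then follows.

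The hardest step is the analyticity bound with constants uniform in $\alpha$, since $\ddd_\alpha$ is not self-adjoint. One must verify that the closed sesquilinear form $\overrightarrow{\mathfrak{a}}_\alpha+c\alpha^2\langle\cdot,\cdot\rangle$ is sectorial with sector angle independent of $\alpha$. The imaginary part of $\overrightarrow{\mathfrak{a}}_\alpha(\oo,\oo)$ comes from the cross terms $2\alpha\operatorname{Im}\int\langle\nabla\oo,d\phi_k\otimes\oo\rangle$ and is dominated by $2\alpha\|\oo\|_2\|\nabla\oo\|_2$; Young's inequality permits absorbing this into $\operatorname{Re}\bigl(\overrightarrow{\mathfrak{a}}_\alpha(\oo,\oo)+c\alpha^2\|\oo\|^2\bigr)$ for some universal constant $c>1$, yielding the required sectoriality. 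Once this is secured, the remainder of the argument is a routine assembly of Davies perturbation, coercivity, and analyticity.
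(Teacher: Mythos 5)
Your proposal is correct and follows essentially the same route as the paper: Davies' perturbation with $\phi_k=\min(\rho(\cdot,E),k)$, the coercivity $\operatorname{Re}((\ddd_\alpha+\alpha^2)\oo,\oo)\ge(1-\epsilon)\|\nabla\oo\|_2^2$ from (S-C), an $\alpha$-uniform sectoriality/analyticity bound (the paper's Lemma \ref{sectoriel}, with $2\alpha^2$ added) combined with Cauchy--Schwarz to get $\|\nabla v_t\|_2\le C t^{-1/2}e^{Ct\alpha^2}\|\eta\|_2$, and finally the choice $\alpha\sim\rho(E,F)/t$ with absorption of the polynomial prefactor. The only cosmetic point is that the shift constant $c$ and the sector angle depend on $\epsilon$ rather than being universal, which is harmless since the final constants are allowed to depend on $\epsilon$.
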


\begin{proof}
As in the proof of $\textbf{Proposition\,\ref{daga}}$, we set $\ddd_{\alpha}=e^{\alpha\phi}\ddd e^{-\alpha\phi}$ where $\alpha>0$ is a constant and $\phi$ is a bounded Lipschitz function such that $|\nabla\phi(x)|_x\le 1$ for almost every $x\in M$. Using the assumption (\ref{SC}), we obtain for $\oo\in \mathcal{D}(\overrightarrow{\mathfrak{a}})$ 
\begin{align*}
\left((\ddd_\alpha+\alpha^2)\oo,\oo\right)
&=\|H^{\frac{1}{2}}\oo\|_2^2-\alpha^2\int_M|\nabla\phi(x)|_x^2|\oo(x)|_x^2d\mu-(R_-\oo,\oo)+\alpha^2\|\oo\|_x^2\\
&\ge \|H^{\frac{1}{2}}\oo\|_2^2-\alpha^2\|\oo\|_2^2-(R_-\oo,\oo)+\alpha^2\|\oo\|_2^2\\
&\ge (1-\epsilon)\|H^{\frac{1}{2}}\oo\|_2^2\\
&\ge (1-\epsilon)\|\nabla\oo\|_2^2.
\end{align*}

\noindent We recall that from the proof of $\textbf{Proposition\;\ref{daga}}$, one has for $\eta\in L^2(\tm)$ 
\begin{equation}\label{equa7}
\|e^{-t\ddd_\alpha}\eta\|_2\le e^{t\alpha^2}\|\eta\|_2.
\end{equation}

\noindent $\textbf{Lemma\, \ref{sectoriel}}$ below ensures that the operator $\ddd_\alpha+2\alpha^2$ is sectorial. As a consequence the semigroup $(e^{-z(\ddd_\alpha+2\alpha^2)})_{t\ge 0}$ is analytic on the sector $\Sigma=\{z\in\mathbb{C}, z\neq 0, |arg(z)|\le \frac{\pi}{2}-Arctan(\gamma)\}$  (where $\gamma$ is the constant appearing in (\ref{equa9}) below) and $\|e^{-z(\ddd_\alpha+2\alpha^2)}\|_{2,2}\le 1$ for all $z\in\Sigma$ (see \cite{ouhabaz} Theorem 1.53, 1.54). A classical argument using the Cauchy formula implies that for all $t\ge 0$
\begin{equation}\label{equa8}
\|(\ddd_\alpha+2\alpha^2)e^{-t(\ddd_\alpha+2\alpha^2)}\|_{2-2}\le \frac{C}{t},
\end{equation}

\noindent where the constant $C$ does not depend on $\alpha$. We notice that for every $\oo\in \mathcal{D}(\overrightarrow{\mathfrak{a}})$ 
\begin{equation}\label{equ2}
\left((\ddd_\alpha+2\alpha^2)\oo,\oo\right)\,\ge\,\left((\ddd_\alpha+\alpha^2)\oo,\oo\right)\,\ge\, (1-\epsilon)\|\nabla\oo\|_2^2.
\end{equation}

\noindent Then setting $\oo = e^{-t(\ddd_\alpha+2\alpha^2)}\eta$ for $\eta\in L^2(\tm)$ and $t\ge 0$, we deduce from (\ref{equa7}), (\ref{equa8}) and (\ref{equ2}) that
\begin{equation}\label{equati4}
\|\nabla e^{-t(\ddd_\alpha+2\alpha^2)}\eta\|_2\le \frac{C}{\sqrt{t}}\|e^{-t(\ddd_\alpha+2\alpha^2)}\eta\|_2\le\frac{C}{\sqrt{t}}\|\eta\|_2, \forall t>0.
\end{equation}

\noindent As we did in the proof of $\textbf{Proposition\;\ref{daga}}$ let $E$ and $F$ two closed subsets of $M$, $\eta\in L^2(\tm)$ with support in $E$ and $\phi_k(x):=\min(\rho(x,E),k)$ for $k\in\mathbb{N}$. Since $e^{\alpha \phi_k}\eta=\eta$, we have $e^{-t\ddd}\eta=e^{-\alpha\phi_k}e^{-t\ddd_\alpha}\eta$. Then we obtain 
\begin{equation*}
\nabla e^{-t\ddd}\eta=-\alpha e^{-\alpha\phi_k}\nabla\phi_k\otimes e^{-t\ddd_\alpha}\eta+e^{-\alpha\phi_k}\nabla e^{-t\ddd_\alpha}\eta.
\end{equation*}

\noindent Since $|\nabla\phi_k(x)|_x\le 1$ for almost every $x\in M$, we deduce from (\ref{equa7}) and (\ref{equati4}) that  
\begin{equation*}
\|\chi_F\nabla e^{-t\ddd}\eta\|_2\le \alpha e^{-\alpha\min(\rho(E,F),k)}e^{t\alpha^2}\|\eta\|_2+\frac{C}{\sqrt{t}}e^{-\alpha \min(\rho(E,F),k)}e^{2t\alpha^2}\|\eta\|_2.
\end{equation*}

\noindent Now letting $k$ tends to infinity and setting $\alpha=\frac{\rho(E,F)}{4t}$, we finally obtain 
\begin{align*}
\|\chi_F \nabla e^{-t\ddd}\eta\|_2
&\le \frac{C}{\sqrt{t}}(1+\frac{\rho(E,F)}{4\sqrt{t}})e^{-\frac{\rho^2(E,F)}{8t}}\|\eta\|_2\\
&\le \frac{C}{\sqrt{t}}e^{-c\frac{\rho^2(E,F)}{t}}\|\eta\|_2,
\end{align*}

\noindent which is the desired result. 
\end{proof}

\noindent In the following lemma, we study sectoriality. Then we need to work with complex valued 1-forms. This is achieved as usual by introducing the complex Hilbert spaces $L^2(\tm)\oplus iL^2(\tm)$ and  $\mathcal{D}(\overrightarrow{a})\oplus i\mathcal{D}(\overrightarrow{a})$.

\begin{lem}\label{sectoriel}
Under the assumption (\ref{SC}), the operator $\ddd_\alpha+2\alpha^2$ is sectorial. That is there exists a constant $\gamma\ge 0$ such that for all $\oo\in\mathcal{D}(\ddd_\alpha+2\alpha^2)$ 
\begin{equation}\label{equa9}
|Im((\ddd_\alpha+2\alpha^2)\oo,\oo)|\le \gamma \,Re((\ddd_\alpha+2\alpha^2)\oo,\oo)
\end{equation}
\end{lem}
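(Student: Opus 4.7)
The plan is to expand the sesquilinear form $\overrightarrow{a_\alpha}(\omega,\omega) = \overrightarrow{\mathfrak{a}}(e^{-\alpha\phi}\omega,e^{\alpha\phi}\omega)$ explicitly, split it into real and imaginary parts, show that the real part controls $(1-\epsilon)\|\nabla\omega\|_2^2 + \alpha^2\|\omega\|_2^2$, and bound the imaginary part by $2\alpha\|\nabla\omega\|_2\|\omega\|_2$ so that a Young inequality gives the sector constant $\gamma = 1/\sqrt{1-\epsilon}$.

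First, using $\nabla(e^{\pm\alpha\phi}\omega) = e^{\pm\alpha\phi}(\nabla\omega \pm \alpha\,\nabla\phi\otimes\omega)$, I would expand
\begin{equation*}
\langle\nabla(e^{-\alpha\phi}\omega),\nabla(e^{\alpha\phi}\omega)\rangle_x = |\nabla\omega(x)|_x^2 + \alpha\langle\nabla\omega,\nabla\phi\otimes\omega\rangle_x - \alpha\langle\nabla\phi\otimes\omega,\nabla\omega\rangle_x - \alpha^2|\nabla\phi|_x^2|\omega(x)|_x^2.
\end{equation*}
Because the pointwise inner product is Hermitian on the complexification, the two cross terms are complex conjugates with opposite signs, and their sum is $2i\alpha\,\textrm{Im}\langle\nabla\omega,\nabla\phi\otimes\omega\rangle_x$. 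The remaining contributions from $(R_+-R_-)$ and the shift $2\alpha^2$ are real because $R_\pm(x)$ are self-adjoint on $T^*_xM$. Splitting accordingly, the real part, combined exactly as in the proof of Proposition \ref{proposition1} and keeping one extra $\alpha^2\|\omega\|_2^2$ from the $2\alpha^2$ shift, gives
\begin{equation*}
\textrm{Re}\left((\ddd_\alpha+2\alpha^2)\omega,\omega\right) \ge (1-\epsilon)\|H^{\frac{1}{2}}\omega\|_2^2 + \alpha^2\|\omega\|_2^2 \ge (1-\epsilon)\|\nabla\omega\|_2^2 + \alpha^2\|\omega\|_2^2.
\end{equation*}

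Next I would bound the imaginary part. By Cauchy--Schwarz applied pointwise together with $|\nabla\phi\otimes\omega|_x \le |\nabla\phi|_x|\omega|_x$ and the hypothesis $|\nabla\phi|_x\le 1$, one has $|\textrm{Im}\langle\nabla\omega,\nabla\phi\otimes\omega\rangle_x|\le |\nabla\omega(x)|_x|\omega(x)|_x$ a.e., hence
\begin{equation*}
\left|\textrm{Im}\left((\ddd_\alpha+2\alpha^2)\omega,\omega\right)\right| \le 2\alpha\int_M |\nabla\omega(x)|_x|\omega(x)|_x\,d\mu \le 2\alpha\,\|\nabla\omega\|_2\|\omega\|_2.
\end{equation*}
Finally, Young's inequality $2ab \le \sqrt{1-\epsilon}\,a^2 + \frac{1}{\sqrt{1-\epsilon}}\,b^2$ with $a=\|\nabla\omega\|_2$ and $b=\alpha\|\omega\|_2$ yields
\begin{equation*}
2\alpha\,\|\nabla\omega\|_2\|\omega\|_2 \le \tfrac{1}{\sqrt{1-\epsilon}}\bigl[(1-\epsilon)\|\nabla\omega\|_2^2+\alpha^2\|\omega\|_2^2\bigr] \le \tfrac{1}{\sqrt{1-\epsilon}}\,\textrm{Re}\left((\ddd_\alpha+2\alpha^2)\omega,\omega\right),
\end{equation*}
so $\gamma = 1/\sqrt{1-\epsilon}$ works.

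The main obstacle is the complexification step: the paper has worked almost exclusively with real $1$-forms, while sectoriality requires the form to be tested on complex-valued $\omega\in\mathcal{D}(\overrightarrow{\mathfrak{a}})\oplus i\mathcal{D}(\overrightarrow{\mathfrak{a}})$. I would therefore begin by extending $\overrightarrow{\mathfrak{a}}$, $R_\pm$, and the pointwise inner products to the complexification and checking that the real/imaginary decomposition above is truly the Hermitian one --- otherwise the imaginary cross term would simply vanish (as it does for real $\omega$) and the statement would be trivial. Once this is set up, the computation above is routine, and the $2\alpha^2$ shift (rather than the $\alpha^2$ used in Proposition \ref{daga}) is what provides exactly the spare $\alpha^2\|\omega\|_2^2$ needed to absorb the imaginary cross term via Young.
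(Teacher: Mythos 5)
Your proposal is correct and follows essentially the same route as the paper: expand $\overrightarrow{a_\alpha}(\oo,\oo)$ on the complexified form domain, observe that the cross terms are purely imaginary, bound the real part from below via (\ref{SC}) using the extra $\alpha^2\|\oo\|_2^2$ from the $2\alpha^2$ shift, and control the imaginary part by Cauchy--Schwarz plus a Young inequality. The only difference is cosmetic bookkeeping: the paper passes through $\|\nabla\oo\|_2\le\|H^{\frac{1}{2}}\oo\|_2\le(1-\epsilon)^{-\frac{1}{2}}\overrightarrow{a}^{\frac{1}{2}}(\oo,\oo)$ and uses two separate real-part lower bounds (yielding an unspecified $C_\epsilon$), whereas you apply a weighted Young inequality directly against the combined lower bound and obtain the explicit constant $\gamma=1/\sqrt{1-\epsilon}$.
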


\begin{proof}
\noindent We consider $\oo\in\mathcal{D}(\overrightarrow{a})\oplus i\mathcal{D}(\overrightarrow{a})$. Since $|\nabla\phi(x)|_x\le 1$ for almost every $x\in M$, we have 
\begin{align*}
\overrightarrow{a_\alpha}(\oo,\oo)=
&\overrightarrow{a}(\oo,\oo)+\alpha\int_M <\nabla\oo(x),\nabla\phi(x)\otimes \overline{\oo(x)}>_xd\mu\\
&-\alpha\int_M<\nabla\phi(x)\otimes\oo(x),\overline{\nabla\oo(x)}>_xd\mu-\alpha^2\int_M|\nabla\phi(x)^2|_x|\oo(x)|_x^2d\mu\\
&\ge \overrightarrow{a}(\oo,\oo)+2i\alpha Im\left(\int_M<\nabla\phi(x)\otimes\oo(x),\overline{\nabla\oo(x)}>_xd\mu\right)-\alpha^2\|\oo\|_2^2.
\end{align*}

\noindent Therefore we deduce that 
\begin{equation}\label{eq5}
Re(\overrightarrow{a_\alpha}(\oo,\oo)+2\alpha^2\|\oo\|_2^2)\ge \overrightarrow{a}(\oo,\oo)
\end{equation}
\begin{equation}\label{eq6}
Re(\overrightarrow{a_\alpha}(\oo,\oo)+2\alpha^2\|\oo\|_2^2)\ge \alpha^2\|\oo\|_2^2.
\end{equation}

\noindent Furthermore, the Cauchy-Schwarz inequality and the assumption (\ref{SC}) yield 
\begin{align*}
|Im(\overrightarrow{a_\alpha}(\oo,\oo)+2\alpha^2\|\oo\|_2^2)|
&=\left|2\alpha Im\left(\int_M<\nabla \phi(x)\otimes\oo(x),\overline{\nabla\oo(x)}>_xd\mu\right)\right|\\
& \le 2\alpha \int_M |\oo(x)|_x|\nabla\phi(x)|_x|\nabla\oo(x)|_xd\mu\\
& \le 2\alpha\|\oo\|_2\|\nabla\oo\|_2\\
&\le 2\alpha\|\oo\|_2\|H^{\frac{1}{2}}\oo\|_2\\
&\le 2\alpha\sqrt{\frac{1}{1-\epsilon}}\|\oo\|_2\overrightarrow{a}^{\frac{1}{2}}(\oo,\oo)\\
&\le \frac{1}{1-\epsilon}\overrightarrow{a}(\oo,\oo)+\alpha^2 \|\oo\|_2^2.
\end{align*}

\noindent Using $(\ref{eq5})$ and $(\ref{eq6})$, we deduce that there exists a constant $C_\epsilon$ such that 
\begin{equation*}
|Im(\overrightarrow{a_\alpha}(\oo,\oo)+2\alpha^2\|\oo\|_2^2)|\le C_\epsilon Re(\overrightarrow{a_\alpha}(\oo,\oo)+2\alpha^2\|\oo\|_2^2),
\end{equation*}

\noindent which means that $\ddd_\alpha+2\alpha^2$ is sectorial. (see \cite{ouhabaz} Proposition 1.27)
\end{proof}

\noindent An immediate consequence of $\textbf{Lemma \,\ref{lemme2}}$ and $\textbf{Proposition \,\ref{proposition1}}$ is the following result.

\begin{cor}\label{corollaire1}
Assume that (\ref{SC}) is satisfied. Let $E, F$ be two closed subsets of $M$. For any $\eta\in L^2(\tm)$ with support in $E$ 
\begin{enumerate}[(i)]
\item $\displaystyle{\|d e^{-t\ddd}\eta\|_{L^2(F)}\le \frac{C}{\sqrt{t}}e^{-c\frac{\rho^2(E,F)}{t}}\|\eta\|_2,}$
\item $\displaystyle{\|d^* e^{-t\ddd}\eta\|_{L^2(F)}\le \frac{C}{\sqrt{t}}e^{-c\frac{\rho^2(E,F)}{t}}\|\eta\|_2.}$
\end{enumerate}
\end{cor}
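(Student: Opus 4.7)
The plan is to apply the pointwise dominations provided by \textbf{Lemma \ref{lemme2}} to the 1-form $\oo = e^{-t\ddd}\eta$, and then integrate and invoke \textbf{Proposition \ref{proposition1}}. Concretely, for a form $\eta \in \mathcal{C}_0^{\infty}(\tm)$ supported in $E$, the form $\oo = e^{-t\ddd}\eta$ lies in the domain of $\ddd$ and is regular enough to be ``suitable'' in the sense of \textbf{Lemma \ref{lemme2}}, so the inequalities $|d\oo(x)|_x \le 2|\nabla\oo(x)|_x$ and $|d^*\oo(x)|_x \le \sqrt{N}|\nabla\oo(x)|_x$ hold pointwise almost everywhere on $M$.

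For part $(i)$, I would square the pointwise bound for $d\oo(x)$, integrate over $F$ against $d\mu$, and take the square root, which yields
\begin{equation*}
\|d e^{-t\ddd}\eta\|_{L^2(F)} \le 2 \|\nabla e^{-t\ddd}\eta\|_{L^2(F)}.
\end{equation*}
\textbf{Proposition \ref{proposition1}} then supplies
\begin{equation*}
\|\nabla e^{-t\ddd}\eta\|_{L^2(F)} \le \frac{C}{\sqrt{t}}\,e^{-c\frac{\rho^2(E,F)}{t}}\|\eta\|_2,
\end{equation*}
and combining the two gives the stated estimate with the constant $2C$. Part $(ii)$ is handled in exactly the same way, replacing the factor $2$ by $\sqrt{N}$.

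The conclusion is then extended from $\eta \in \mathcal{C}_0^{\infty}(\tm)$ with support in $E$ to arbitrary $\eta \in L^2(\tm)$ with support in $E$ by a standard density argument (approximate $\eta$ in $L^2$ by a sequence of smooth compactly supported forms whose supports stay in a fixed neighbourhood of $E$; the estimate is stable under this limit since the constants do not depend on $\eta$). There is no real obstacle in this argument: the content of the corollary is already packaged in the preceding lemma and proposition, and the only thing to verify is the legitimacy of applying the pointwise domination to $e^{-t\ddd}\eta$, which is routine given the regularizing nature of the heat semigroup.
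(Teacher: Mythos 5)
Your proof is correct and is exactly the argument the paper intends: the paper presents this corollary as an immediate consequence of Lemma \ref{lemme2} (pointwise domination of $|d\oo|$ and $|d^*\oo|$ by $|\nabla\oo|$) combined with Proposition \ref{proposition1}, which is precisely what you do. The only cosmetic remark is that the density step is essentially superfluous, since Proposition \ref{proposition1} already holds for arbitrary $\eta\in L^2(\tm)$ supported in $E$ and $e^{-t\ddd}\eta$ is smooth by the regularizing property of the semigroup, so the pointwise lemma applies directly.
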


\noindent We are now able to prove $L^p$-$L^2$ off-diagonal estimates for the operators $d^*e^{-t\ddd}$ and $d\,e^{-t\ddd}$.

\begin{thm}\label{theorem4}
Suppose that $(\ref{D})$, $(\ref{G})$ and $($\ref{SC}$)$ are satisfied. Then for all $r,t>0$, $x,y\in M$ and all $p\in(p_0',2]$
\begin{equation}\label{estimation1}
\|\chi_{C_j(x,r)}d\,e^{-t\ddd}\chi_{B(x,r)}\|_{p-2}\le\frac{Ce^{-c\frac{4^jr^2}{t}}}{\sqrt{t}\,v(x,r)^{\frac{1}{p}-\frac{1}{2}}}\left(\max(\frac{r}{\sqrt{t}},\frac{\sqrt{t}}{r})\right)^{\beta}2^{j\beta},
\end{equation}

\begin{equation}\label{estimation2}
\|\chi_{C_j(x,r)}d^*e^{-t\ddd}\chi_{B(x,r)}\|_{p-2}\le\frac{Ce^{-c\frac{4^jr^2}{t}}}{\sqrt{t}\,v(x,r)^{\frac{1}{p}-\frac{1}{2}}}\left(\max(\frac{r}{\sqrt{t}},\frac{\sqrt{t}}{r})\right)^{\beta}2^{j\beta},
\end{equation}

\noindent where $C_j(x,r)=B(x,2^{j+1}r)\setminus B(x,2^j r)$ and $\beta\ge 0$ depends on $p$.
\end{thm}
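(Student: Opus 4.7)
The plan is to factor $d\,e^{-t\ddd} = d\,e^{-(t/2)\ddd} \circ e^{-(t/2)\ddd}$ and compose the $L^p$-$L^2$ off-diagonal estimates for the heat semigroup from \textbf{Theorem \ref{theorem3}} with the $L^2$-$L^2$ off-diagonal estimates for $d\,e^{-(t/2)\ddd}$ from \textbf{Corollary \ref{corollaire1}}. Given $f \in L^p(\tm)$ supported in $B(x,r)$, I would insert the partition of $M$ associated with the decomposition $M = B(x,r) \sqcup \bigsqcup_{k \ge 0} C_k(x,r)$ between the two semigroup factors, so that
\[ \chi_{C_j(x,r)} d\,e^{-t\ddd} f = \chi_{C_j(x,r)} d\,e^{-(t/2)\ddd} \chi_{B(x,r)} e^{-(t/2)\ddd} f + \sum_{k=0}^{\infty} \chi_{C_j(x,r)} d\,e^{-(t/2)\ddd} \chi_{C_k(x,r)} e^{-(t/2)\ddd} f, \]
and estimate each term separately.

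For the inner $e^{-(t/2)\ddd}$ factor, I would use \textbf{Theorem \ref{theorem3}}(i) (with $y = x$ and $q = 2$) on the piece landing in $B(x,r)$, and \textbf{Theorem \ref{theorem3}}(ii) (with $q = 2$ and $t$ replaced by $t/2$) on the piece landing in $C_k(x,r)$, bounding the factor $(\max(2^{k+1}r/\sqrt{t}, \sqrt{t}/(2^{k+1}r)))^{\beta}$ by $2^{(k+1)\beta}(\max(r/\sqrt{t}, \sqrt{t}/r))^{\beta}$. For the outer factor $\chi_{C_j(x,r)} d\,e^{-(t/2)\ddd} \chi_{A}$ I would invoke \textbf{Corollary \ref{corollaire1}}(i), which provides the $L^2$-$L^2$ bound $(C/\sqrt{t}) e^{-c\rho^2(C_j(x,r), A)/t}$ for any closed $A \subset M$.

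Combining the two estimates, each term becomes a product of the common prefactor $(C/\sqrt{t})\, v(x,r)^{-(\frac{1}{p}-\frac{1}{2})} (\max(r/\sqrt{t}, \sqrt{t}/r))^{\beta}$ with a polynomial factor $2^{(k+1)\beta}$ and a Gaussian product $e^{-c(\rho^2(C_j(x,r), A_k) + 4^k r^2)/t}$ (where $A_{-1} = B(x,r)$ and $A_k = C_k(x,r)$ for $k \ge 0$). Using that $\rho(C_j(x,r), C_k(x,r))$ is of order $2^{\max(j,k)} r$ whenever $|j - k| \ge 2$, a case analysis shows that for every $k$ the Gaussian factor is dominated by $e^{-c\, 4^{\max(j,k)} r^2/t}$: this is bounded by $e^{-c\, 4^j r^2/t}$ when $k \le j$, and carries genuine super-polynomial decay in $k$ when $k > j$. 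Summing, the contribution from $k \le j$ telescopes via a geometric series to $2^{j\beta}\, e^{-c\, 4^j r^2/t}$, while the tail $k > j$ is controlled by the Gaussian factor and yields the same order of magnitude.

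The main technical obstacle lies in organizing the Gaussian and polynomial factors uniformly in the two regimes $r \ge \sqrt{t}$ and $r < \sqrt{t}$: the maximum factor on the right-hand side must absorb both the small-$k$ contribution (where $\sqrt{t}/(2^{k+1}r)$ dominates when $r < \sqrt{t}$) and the large-$k$ tail (where the Gaussian $e^{-c\, 4^k r^2/t}$ is needed to tame the polynomial growth $2^{k\beta}$). Estimate \eqref{estimation2} for $d^*$ is obtained by the identical argument, invoking \textbf{Corollary \ref{corollaire1}}(ii) in place of \textbf{Corollary \ref{corollaire1}}(i).
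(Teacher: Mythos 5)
Your proposal is correct, and it relies on exactly the same ingredients as the paper (the off-diagonal bounds of \textbf{Theorem \ref{theorem3}}, the $L^2$-$L^2$ bounds of \textbf{Corollary \ref{corollaire1}}, and the splitting $d\,e^{-t\ddd}=d\,e^{-\frac{t}{2}\ddd}e^{-\frac{t}{2}\ddd}$), but it assembles them differently. The paper first runs the composition argument (deferred to \cite{ao}, Theorem 3.5) only to produce ball-to-ball estimates $\|\chi_{B(x,r)}d\,e^{-t\ddd}\chi_{B(y,r)}\|_{p-2}$ with a Gaussian factor in $\rho(B(x,r),B(y,r))$; it then gets the annulus bound crudely, via $\|\chi_{C_j(x,r)}d\,e^{-t\ddd}\chi_{B(x,r)}\|_{p-2}\le\|\chi_{B(x,2^{j+1}r)}d\,e^{-t\ddd}\chi_{B(x,2^{j+1}r)}\|_{p-2}$, which yields the $2^{j\beta}$ growth but loses the decay in $j$, and finally reinstates the factor $e^{-c4^jr^2/t}$ by Riesz--Thorin interpolation with the $L^2$-$L^2$ bound $\|\chi_{C_j(x,r)}d\,e^{-t\ddd}\chi_{B(x,r)}\|_{2-2}\le \frac{C}{\sqrt t}e^{-c\frac{4^jr^2}{t}}$. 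You instead perform the composition directly against the decomposition $M=B(x,r)\sqcup\bigsqcup_k C_k(x,r)$ with the annulus $C_j(x,r)$ as target, and extract the Gaussian decay from the summation itself, with no final interpolation. Your route is self-contained (it does not outsource the composition step) but requires the two-regime tail analysis you describe: for $k>j$ with $4^jr^2/t$ small, the sum $\sum_{k>j}2^{k\beta}e^{-c4^kr^2/t}$ is only of size $(\sqrt t/r)^{\beta}$, so you must absorb it into the $\left(\max(\frac{r}{\sqrt t},\frac{\sqrt t}{r})\right)^{\beta}$ factor, i.e.\ accept a larger exponent $\beta$; this is harmless since $\beta$ is not specified in the statement, and the paper's interpolation step achieves the same bookkeeping at the cost of shrinking the Gaussian constant instead. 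Both arguments transfer verbatim to $d^*$ via part (ii) of \textbf{Corollary \ref{corollaire1}}, as you note.
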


\begin{proof}
We only prove (\ref{estimation1}) since (\ref{estimation2}) can be obtained in the same manner. By $\textbf{Corollary\,\ref{corollaire1}}$, we have for all $x,z\in M$ and $r,t\ge 0$
\begin{equation*}
\|\chi_{B(x,r)}d\,e^{-t\ddd}\chi_{B(z,r)}\|_{2-2}\le \frac{C}{\sqrt{t}}e^{-c\frac{\rho^2(B(x,r),B(z,r))}{t}}.
\end{equation*} 

\noindent In addition by $\textbf{Theorem\,\ref{theorem3}}$, we have for all $y,z\in M$, $r,t\ge0$ and $p\in(p_0',2]$
\begin{equation*}
\|\chi_{B(z,r)}e^{-t\ddd}\chi_{B(y,r)}\|_{p-2}\le \frac{C}{v(z,r)^{\frac{1}{p}-\frac{1}{2}}}e^{-c\frac{\rho^2(B(y,r),B(z,r))}{t}}.
\end{equation*}

\noindent Then writing $d\,e^{-t\ddd}=d\,e^{-\frac{t}{2}\ddd}e^{-\frac{t}{2}\ddd}$ and using a composition argument, we obtain 
\begin{equation}\label{equa12}
\|\chi_{B(x,r)}d\,e^{-t\ddd}\chi_{B(y,r)}\|_{p-2}\le \frac{C}{\sqrt{t}\,v(y,r)^{\frac{1}{p}-\frac{1}{2}}}\left(\max(\frac{r}{\sqrt{t}},\frac{\sqrt{t}}{r})\right)^{\beta}e^{-c\frac{\rho^2(B(x,r),B(y,r))}{t}}.
\end{equation}

\noindent For more details on the composition argument see \cite{ao} Theorem 3.5.

\noindent Writing  $\chi_{C_j(x,r)}d\,e^{-t\ddd}\chi_{B(x,r)}=\chi_{C_j(x,r)}\chi_{B(x,2^{j+1}r)}d\,e^{-t\ddd}\chi_{B(x,2^{j+1}r)}\chi_{B(x,r)}$, we notice that 
\begin{equation*}
\|\chi_{C_j(x,r)}d\,e^{-t\ddd}\chi_{B(x,r)}\|_{p-2} \le \|\chi_{B(x,2^{j+1}r)}d\,e^{-t\ddd}\chi_{B(x,2^{j+1}r)}\|_{p-2}
\end{equation*}

\noindent Then (\ref{equa12}) yields
\begin{align*}
\|\chi_{C_j(x,r)}d\,e^{-t\ddd}\chi_{B(x,r)}\|_{p-2}
& \le \frac{C}{\sqrt{t}\,v(y,r)^{\frac{1}{p}-\frac{1}{2}}}\left(\max(\frac{2^{j+1}r}{\sqrt{t}},\frac{\sqrt{t}}{2^{j+1}r})\right)^{\beta}\\
& \le \frac{C 2^{j\beta}}{\sqrt{t}\,v(y,r)^{\frac{1}{p}-\frac{1}{2}}}\left(\max(\frac{r}{\sqrt{t}},\frac{\sqrt{t}}{r})\right)^{\beta}.
\end{align*}

\noindent Using $\textbf{Corollary\,\ref{corollaire1}}$, we have
\begin{equation}\label{equa13}
\|\chi_{C_j(x,r)}d\,e^{-t\ddd}\chi_{B(x,r)}\|_{2-2}\le \frac{C}{\sqrt{t}}e^{-c\frac{4^jr^2}{t}}.
\end{equation}

\noindent Therefore applying the Riesz-Thorin interpolation theorem from (\ref{equa12}) and (\ref{equa13}), we deduce the result.

\end{proof}

\noindent A key result to prove the boundedness of the Riesz transforms $d^*(\ddd)^{-\frac{1}{2}}$ and $d(\ddd)^{-\frac{1}{2}}$ is a result in \cite{blunck2} which we state as it is formulated in \cite{auscher}, Theorem 2.1.

\begin{thm}
Let $p\in(1,2]$. Suppose that $T$ is a sublinear operator of strong type $(2,2)$, and let $(A_r)_{r>0}$ be a family of linear operators acting on $L^2$. Assume that for $j\ge 2$ and every ball $B=B(x,r)$
\begin{equation}\label{eq7}
\left(\frac{1}{v(x,2^{j+1}r)}\int_{C_j(x,r)}|T(I-A_r)f|^2\right)^{\frac{1}{2}}\le g(j) \left(\frac{1}{v(x,r)}\int_B|f|^p\right)^{\frac{1}{p}},
\end{equation}
and for $j\ge 1$
\begin{equation}\label{eq8}
\left(\frac{1}{v(x,2^{j+1}r)}\int_{C_j(x,r)}|A_rf|^2\right)^{\frac{1}{2}}\le g(j) \left(\frac{1}{v(x,r)}\int_B|f|^p\right)^{\frac{1}{p}},
\end{equation}
for all $f$ supported in $B$. If $\Sigma:=\displaystyle{\sum_j}g(j)2^{Dj}<\infty$, then $T$ is of weak type $(p,p)$, with a bound depending only on the strong type $(2,2)$ bound of $T$, $p$ and $\Sigma$.
\end{thm}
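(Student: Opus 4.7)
This is a Calderón-Zygmund-type theorem in the off-diagonal setting of Blunck-Kunstmann, and I would prove it by a good-bad decomposition adapted to the family $(A_r)_{r>0}$. Given $f \in L^p$ and $\alpha > 0$, the first step is a Calderón-Zygmund decomposition of $|f|^p$ at level $\alpha^p$ on the space of homogeneous type $(M, \rho, \mu)$: this yields a writing $f = g + \sum_i b_i$ with $\|g\|_\infty \le C\alpha$ and $\|g\|_p \le \|f\|_p$, together with balls $B_i = B(x_i, r_i)$ of bounded overlap such that $\mathrm{supp}\, b_i \subset B_i$, $\int_{B_i} |b_i|^p \le C \alpha^p v(x_i, r_i)$, and $\sum_i v(x_i, r_i) \le C \alpha^{-p} \|f\|_p^p$. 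Setting $E := \bigcup_i 4 B_i$, the doubling property gives $|E| \le C \alpha^{-p} \|f\|_p^p$, and the sublinearity of $T$ reduces the weak $(p,p)$ inequality to controlling $|\{|Tg|>\alpha/2\}|$ and $|\{|Tb|>\alpha/2\} \cap E^c|$.

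The good part is handled immediately by Chebyshev's inequality and the strong $(2,2)$ bound on $T$: $|\{|Tg|>\alpha/2\}| \le 4\alpha^{-2}\|Tg\|_2^2 \le C \alpha^{-2} \|g\|_\infty^{2-p} \|g\|_p^p \le C \alpha^{-p} \|f\|_p^p$. For the bad part, Chebyshev in $L^2(E^c)$ reduces matters to estimating $\|Tb\|_{L^2(E^c)}$, and I would split $Tb = S_1 + S_2$ where $S_1 = \sum_i T(I-A_{r_i})b_i$ and $S_2 = T\bigl(\sum_i A_{r_i} b_i\bigr)$. For $S_1$, since every $x \in E^c$ lies in $\bigcup_{j \ge 2} C_j(x_i, r_i)$ for each $i$, Minkowski's inequality in the annular index $j$ gives
\begin{equation*}
\|S_1\|_{L^2(E^c)} \le \sum_{j \ge 2} \Bigl\| \sum_i \chi_{C_j(x_i, r_i)} T(I-A_{r_i})b_i \Bigr\|_2.
\end{equation*}
Cauchy-Schwarz together with the overlap bound $\sum_i \chi_{C_j(x_i, r_i)} \le C 2^{Dj}$ (which follows from the bounded overlap of the $B_i$ and the doubling property), the hypothesis (\ref{eq7}), the Calderón-Zygmund estimate $(v(x_i,r_i)^{-1} \int_{B_i}|b_i|^p)^{1/p} \le C \alpha$, and one more application of doubling then bound each summand by $C g(j) 2^{Dj} \alpha^{1-p/2} \|f\|_p^{p/2}$. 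Summing over $j \ge 2$ produces the factor $\Sigma$. For $S_2$, the $L^2$-boundedness of $T$ reduces to bounding $\|\sum_i A_{r_i} b_i\|_2$, and the same annular/overlap argument now using (\ref{eq8}) (with $j \ge 0$) gives the same estimate. Chebyshev then yields $|\{|Tb|>\alpha/2\} \cap E^c| \le C \Sigma^2 \alpha^{-p} \|f\|_p^p$, which with the good-part bound completes the proof.

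The main obstacle is the $\ell^2$-summation over the Calderón-Zygmund balls in the bad-part estimate: one must convert the $L^p$-mass control $\sum_i \int_{B_i}|b_i|^p \lesssim \|f\|_p^p$ into an $L^2$-type estimate on $\sum_i T(\cdot)b_i$ over $E^c$ while losing only a single factor of $2^{Dj}$ per annulus, rather than the crude $2^{2Dj}$ that a naïve Minkowski would give. This is exactly the economy captured by the hypothesis $\Sigma = \sum_j g(j) 2^{Dj} < \infty$, and it is realized through the overlap bound on the family $\{C_j(x_i, r_i)\}_i$ combined with the Cauchy-Schwarz step above; verifying that overlap bound carefully is the main technical point, but it is a standard consequence of the doubling hypothesis and of the geometry of the Calderón-Zygmund decomposition.
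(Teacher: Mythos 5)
The overall architecture you propose (Calderón--Zygmund decomposition of $|f|^p$ at height $\alpha^p$, good/bad splitting, hypothesis (\ref{eq7}) for $\sum_i T(I-A_{r_i})b_i$ off the dilated balls, hypothesis (\ref{eq8}) together with the strong $(2,2)$ bound for $T\bigl(\sum_i A_{r_i}b_i\bigr)$) is exactly the Blunck--Kunstmann/Auscher scheme; the paper itself gives no proof and simply cites \cite{blunck2} and \cite{auscher}. But the step you yourself single out as the main technical point is false: bounded overlap of the balls $B_i$ does \emph{not} imply $\sum_i \chi_{C_j(x_i,r_i)}\le C2^{Dj}$, nor any bound independent of the family. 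Take the manifold to be $\mathbb{R}$ and $B_i=B(3\cdot 2^i,2^i)=(2^{i+1},2^{i+2})$ for $i=1,\dots,n$: these are pairwise disjoint, yet $B(x_i,4r_i)=(-2^i,7\cdot 2^i)$ contains the point $0$ while $B(x_i,2r_i)$ does not, so $\sum_i\chi_{C_1(x_i,r_i)}(0)=n$ with $D=1$ fixed. Configurations of this type (balls at many different scales accumulating at a boundary point of $\Omega=\{\mathcal{M}(|f|^p)>\alpha^p\}$) genuinely occur in Calderón--Zygmund decompositions, so the dilated annuli can pile up without bound. Since your estimates of $\|S_1\|_{L^2(E^c)}$ and of $\bigl\|\sum_i A_{r_i}b_i\bigr\|_2$ both hinge on Cauchy--Schwarz weighted by this overlap, the argument collapses at its central point.

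The correct way to sum over $i$ while paying only $2^{Dj}$ --- the route of \cite{blunck2} and \cite{auscher} --- is by duality rather than by pointwise overlap of the dilated annuli: pair $\sum_i|T(I-A_{r_i})b_i|$ against $u\in L^2$ with $\|u\|_2=1$; on each annulus, Cauchy--Schwarz, (\ref{eq7}) and the CZ bound $\bigl(v(x_i,r_i)^{-1}\int_{B_i}|b_i|^p\bigr)^{1/p}\le C\alpha$ give $\int_{C_j(x_i,r_i)}|T(I-A_{r_i})b_i|\,|u|\le C g(j)\,\alpha\, v(x_i,2^{j+1}r_i)\,\inf_{y\in B_i}\bigl(\mathcal{M}(|u|^2)(y)\bigr)^{1/2}$, where $\mathcal{M}$ is the Hardy--Littlewood maximal operator. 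Doubling then yields $v(x_i,2^{j+1}r_i)\le C2^{(j+1)D}v(x_i,r_i)$ --- this is where the factor $2^{Dj}$ actually comes from --- the finite overlap of the \emph{undilated} balls $B_i\subset\Omega$ lets you sum $\sum_i v(x_i,r_i)\inf_{B_i}(\mathcal{M}(|u|^2))^{1/2}\le C\int_\Omega (\mathcal{M}(|u|^2))^{1/2}\,d\mu$, and Kolmogorov's inequality bounds this by $C\mu(\Omega)^{1/2}\le C\alpha^{-p/2}\|f\|_p^{p/2}$. The same duality argument handles $\bigl\|\sum_i A_{r_i}b_i\bigr\|_2$ via (\ref{eq8}); note also that your appeal to (\ref{eq8}) ``with $j\ge 0$'' is not covered by the hypothesis as stated --- in Auscher's convention $C_1(B)=4B$ is not an annulus, and it is this convention that takes care of the local piece of $A_{r_i}b_i$. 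Your good-part estimate and the reduction to these two sums are fine.
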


\noindent Finally we prove $\textbf{Theorem\;\ref{mainresult}}$.

\begin{proof}[Proof of Theorem \ref{mainresult}]
We argue as in \cite{ao} Theorem 3.6. We set $T=d^*(\ddd)^{-\frac{1}{2}}$ and consider the operators $A_r=I-(I-e^{-r^2\ddd})^m$ for some sufficiently large integer $m$. The estimate $(\ref{eq8})$ can be obtained using the estimate
\begin{equation*}
\|\chi_{C_j(x,r)}e^{-t\ddd}\chi_{B(x,r)}\|_{p-q}\le\frac{Ce^{-c\frac{4^jr^2}{t}}}{v(x,r)^{\frac{1}{p}-\frac{1}{q}}}\left(\max(\frac{2^{j+1}r}{\sqrt{t}},\frac{\sqrt{t}}{2^{j+1}r})\right)^{\beta},
\end{equation*}
\noindent which we proved in $\textbf{Theorem\;\ref{theorem3}}$ (see \cite{ao} Theorem 3.6).

\medskip

\noindent The estimate $(\ref{eq7})$ can be obtained using the estimate 
\begin{equation*}
\|\chi_{C_j(x,r)}d^*e^{-t\ddd}\chi_{B(x,r)}\|_{p-2}\le\frac{Ce^{-c\frac{4^jr^2}{t}}}{\sqrt{t}\,v(x,r)^{\frac{1}{p}-\frac{1}{q}}}\left(\max(\frac{r}{\sqrt{t}},\frac{\sqrt{t}}{r})\right)^{\beta}2^{j\beta},
\end{equation*}

\noindent which we proved in $\textbf{Theorem\;\ref{theorem4}}$ (see \cite{ao} Theorem 3.6).

\medskip

\noindent The proof is the same for $T=d(\ddd)^{-\frac{1}{2}}$.
\end{proof}

\section{Sub-criticality and proof of \textbf{Theorem\;\ref{mainresult2}}}

\noindent The assumption $($\ref{SC}$)$ can be understood as a "smallness" condition on the negative part $R_-$ of the Ricci curvature. But since $R_-$ is a geometric component of the manifold $M$, it would be interesting to have analytic or geometric conditions which lead to this assumption. This is the purpose of this section. 

\medskip

\noindent We recall that Devyver \cite{devyver} studied the boundedness of the Riesz transform $d(\Delta)^{-\frac{1}{2}}$ from $L^p(M)$ to $L^p(\tm)$ where $M$ is a complete non-compact Riemannian manifold satisfying a global Sobolev type inequality 
\begin{equation*}
\|f\|_{\frac{2N}{N-2}}\le C \|df\|_2,\forall f\in\mathcal{C}_0^{\infty}(M).
\end{equation*}
Assuming $R_-\in L^{\frac{N}{2}}$, he proved that $R_-$ satisfies the assumption $($\ref{SC}$)$ if and only if the space
\begin{equation*}
Ker_{\mathcal{D}(\overrightarrow{\mathfrak{h}})}(\ddd):=\{\oo\in\mathcal{D}(\overrightarrow{\mathfrak{h}}) : \forall \eta\in\mathcal{C}^{\infty}_0(\tm), (\omega,\ddd\eta)=0\}
\end{equation*}

\noindent is trivial. Here $\mathfrak{h}$ denotes the sesquilinear form defined for all $\oo,\eta\in\mathcal{C}_0^{\infty}(\tm)$ by
\begin{equation*}
\overrightarrow{\mathfrak{h}}(\oo,\eta)=\int_M <\nabla \oo(x),\nabla \eta(x)>_x \,d\mu+\int_M<R_+(x)\oo(x),\eta(x)>_xd\mu,
\end{equation*}
\begin{equation*}
\text{and }\;\mathcal{D}(\overrightarrow{\mathfrak{h}})=\overline{\mathcal{C}_0^{\infty}(\tm)}^{\|.\|_{\overrightarrow{\mathfrak{h}}}},
\end{equation*}

\noindent where $\|\oo\|_{\overrightarrow{\mathfrak{h}}}=\sqrt{\overrightarrow{\mathfrak{h}}(\oo,\oo)+\|\oo\|_2^2}$. We recall that $H$ denotes its associated operator, that is, $H=\nabla^*\nabla+R_+$.

\medskip

\noindent  Assaad and Ouhabaz introduced in \cite{ao} the following quantities 
\begin{equation*}
\alpha_1= \int_0^1\left\|\frac{R_-^{\frac{1}{2}}}{v(.,\sqrt{t})^{\frac{1}{r_1}}}\right\|_{r_1}\frac{dt}{\sqrt{t}}, \;\;\alpha_2=\int_1^{\infty}\left\|\frac{R_-^{\frac{1}{2}}}{v(.,\sqrt{t})^{\frac{1}{r_2}}}\right\|_{r_2}\frac{dt}{\sqrt{t}},
\end{equation*}

\noindent for some $r_1,r_2>2$. We set $\|R_-^{\frac{1}{2}}\|_{vol} := \alpha_1+\alpha_2$. We are interested in the finiteness of this norm. It is clear that if the volume is polynomial, that is, $c\,r^N\le v(x,r)\le Cr^N$, then $\|R_-^{\frac{1}{2}}\|_{vol}<\infty$ if and only if $R_-\in L^{\frac{N}{2}-\eta}\cap L^{\frac{N}{2}+\eta}$ for some $\eta>0$. The latter condition is usually assumed to study the boundedness of Riesz transforms of Schr\"odinger operators on $L^p$ for $p>2$.

\medskip

\noindent We state the main result of this section. 

\begin{thm}\label{noyaunul}
Assume that the manifold $M$ satisfies $(\ref{D})$, $(\ref{G})$ and $\|R_-^{\frac{1}{2}}\|_{vol}<\infty$. Then $R_-$ satisfies $($\ref{SC}$)$ if and only if $Ker_{\mathcal{D}(\overrightarrow{\mathfrak{h}})}(\ddd)=\{0\}$.
\end{thm}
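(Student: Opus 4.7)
The plan is to analyse the self-adjoint operator $T^*T = H^{-\frac{1}{2}} R_- H^{-\frac{1}{2}}$, whose $L^2$-$L^2$ norm is exactly the best constant in (\ref{SC}), and to use the hypothesis $\|R_-^{\frac{1}{2}}\|_{vol}<\infty$ to show this operator is compact. Compactness, combined with the a priori bound coming from the non-negativity of $\ddd$, will let us trade the subcriticality question for the triviality of the kernel.

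For the easy direction, assume (\ref{SC}) and pick $\omega\in Ker_{\mathcal{D}(\overrightarrow{\mathfrak{h}})}(\ddd)$. Because $\mathcal{C}_0^\infty(\tm)$ is dense in $\mathcal{D}(\overrightarrow{\mathfrak{h}})$ with respect to $\|\cdot\|_{\overrightarrow{\mathfrak{h}}}$, the vanishing of $(\omega,\ddd\eta)$ for all test $\eta$ extends to $\overrightarrow{\mathfrak{a}}(\omega,\omega)=0$. Since (\ref{SC}) yields $\overrightarrow{\mathfrak{a}}(\omega,\omega)\ge (1-\epsilon)\overrightarrow{\mathfrak{h}}(\omega,\omega)$, we obtain $\overrightarrow{\mathfrak{h}}(\omega,\omega)=0$, i.e. $\nabla\omega=0$ and $R_+^{\frac{1}{2}}\omega=0$. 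A parallel $1$-form has constant pointwise norm, and the doubling property together with completeness and non-compactness of $M$ forces $\mu(M)=\infty$, so the only such $L^2$ form is $\omega=0$.

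For the harder direction, assume $Ker_{\mathcal{D}(\overrightarrow{\mathfrak{h}})}(\ddd)=\{0\}$. I would carry out the following steps:

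\textbf{Step 1 (a priori bound).} The Hodge-de Rham Laplacian is essentially self-adjoint and non-negative on a complete manifold, so $\overrightarrow{\mathfrak{a}}(\omega,\omega)\ge 0$ for every $\omega\in\mathcal{C}_0^\infty(\tm)$, which translates into $\|T\|_{2-2}\le 1$ for $T:=R_-^{\frac{1}{2}}H^{-\frac{1}{2}}$ (defined on the range of $H^{\frac{1}{2}}$ and extended by density).

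\textbf{Step 2 (compactness).} Using the subordination formula $H^{-\frac{1}{2}}=\frac{1}{\sqrt{\pi}}\int_0^\infty t^{-\frac{1}{2}}e^{-tH}dt$, write
\[
T^*T \;=\; \frac{1}{\pi}\int_0^\infty\!\!\int_0^\infty (st)^{-\frac{1}{2}} e^{-sH}R_- e^{-tH}\,ds\,dt.
\]
The pointwise domination $|e^{-tH}\omega|\le e^{-t\Delta}|\omega|$ promotes (\ref{G}) to Gaussian upper bounds for the heat kernel of $H$, and thus to $L^p$-$L^q$ bounds controlled by $v(x,\sqrt{t})^{-(\frac{1}{p}-\frac{1}{q})}$. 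Splitting via Hölder with exponents dictated by $r_1$ (for $t\le 1$) and $r_2$ (for $t\ge 1$), the boundedness of $T^*T$ reduces to the finiteness of $\alpha_1+\alpha_2=\|R_-^{\frac{1}{2}}\|_{vol}$. To upgrade from bounded to compact, approximate $R_-$ by the bounded, compactly supported truncations $R_-^{(n)}:=\min(R_-,n)\chi_{B(x_0,n)}$. For each $n$, $R_-^{(n)\frac{1}{2}} e^{-tH}$ is compact (the combination of the Gaussian kernel and the compactly supported multiplier is Hilbert–Schmidt on each finite time-slice), hence so is $H^{-\frac{1}{2}}R_-^{(n)}H^{-\frac{1}{2}}$. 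Dominated convergence applied to the double integral, bounded by $\|(R_- - R_-^{(n)})^{\frac{1}{2}}\|_{vol}$, gives norm-convergence of these truncations to $T^*T$, so $T^*T$ is compact.

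\textbf{Step 3 (conclusion).} Since $T^*T$ is compact, self-adjoint and non-negative, its operator norm is attained. If $\|T\|^2=1$, take a unit eigenvector $\omega_0$ with $T^*T\omega_0=\omega_0$. Set $\eta_0:=H^{-\frac{1}{2}}\omega_0$, which lies in $\mathcal{D}(\overrightarrow{\mathfrak{h}})$ and is non-zero. Then $R_-\eta_0=H\eta_0$, i.e. $\ddd\eta_0=0$ in the form sense, so $\eta_0\in Ker_{\mathcal{D}(\overrightarrow{\mathfrak{h}})}(\ddd)$, contradicting our hypothesis. Therefore $\|T\|^2<1$, and setting $\epsilon:=\|T\|^2$ yields (\ref{SC}).

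The main obstacle is Step 2: producing compactness of $T^*T$ from the quantitative integral condition $\|R_-^{\frac{1}{2}}\|_{vol}<\infty$. The matrix-valued nature of $R_-$ rules out sub-Markovian tricks, and one must thread heat-kernel decay, the doubling of $v(x,\sqrt{t})$, and Hölder splitting tuned to the two exponents $r_1,r_2$, before controlling the truncation error in operator norm. Once compactness is established, the rest of the argument follows by a direct spectral-theoretic extraction of an eigenvector.
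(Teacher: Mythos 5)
Your overall strategy coincides with the paper's: identify the best constant in (\ref{SC}) with the $L^2$ norm of $\Lambda=H^{-\frac{1}{2}}R_-H^{-\frac{1}{2}}$, get $\|\Lambda\|_{2-2}\le 1$ from the non-negativity of $\ddd$, prove compactness of $\Lambda$ from the subordination formula, H\"older splitting with $r_1,r_2$ and the finiteness of $\|R_-^{\frac{1}{2}}\|_{vol}$, and then use compactness plus self-adjointness to turn the alternative $\|\Lambda\|=1$ into the existence of an eigenvector at eigenvalue $1$. Your two variants are harmless: the easy direction via parallel forms and infinite volume (infinite volume does follow from (\ref{D}), completeness and non-compactness, the same fact the paper exploits through reverse doubling in Lemma \ref{lemma30}), and compactness via truncation of the potential and Hilbert--Schmidt heat slices, where the paper instead cuts off in space and uses the local compact embedding of $W^{1,2}$ into $L^2$ together with $R_-\in L^\infty_{loc}$ (Lemma \ref{compact}).

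The genuine gap is in your Step 3: you set $\eta_0:=H^{-\frac{1}{2}}\omega_0$ and assert without argument that it lies in $\mathcal{D}(\overrightarrow{\mathfrak{h}})$ and is non-zero. But $H^{-\frac{1}{2}}$ is in general an unbounded operator whose domain is $Im(H^{\frac{1}{2}})$, a proper dense subspace whenever $0$ belongs to the essential spectrum of $H$ (the typical situation on a complete non-compact manifold); nothing in your proposal shows that the eigenvector $\omega_0$ of $\Lambda$ at eigenvalue $1$ belongs to $Im(H^{\frac{1}{2}})$, and this does not follow from general operator theory. This is precisely the content of the paper's Lemma \ref{isomorphism} (following Devyver's Lemma 1): $H^{\frac{1}{2}}$ is an isomorphism from $Ker_{\mathcal{D}(\overrightarrow{\mathfrak{h}})}(\ddd)$ onto $Ker_{L^2}(I-\Lambda)$, whose proof uses the density of $H^{\frac{1}{2}}(\mathcal{C}_0^{\infty}(\tm))$ in $L^2(\tm)$, itself resting on $Ker(H)=\{0\}$ (Lemma \ref{lemma30}, which uses (\ref{D}) and (\ref{G})). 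Without this ingredient, the implication that $\|\Lambda\|_{2-2}=1$ forces $Ker_{\mathcal{D}(\overrightarrow{\mathfrak{h}})}(\ddd)\neq\{0\}$ is not established; once it is supplied, the rest of your Step 3, including the passage from $\Lambda\omega_0=\omega_0$ to $\overrightarrow{\mathfrak{a}}(\eta_0,\eta)=0$ for all test forms $\eta$, goes through as in the paper.
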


\noindent We can observe that this result is similar to the one of Devyver. However, we do not assume any global Sobolev inequality. In this context, with the additional assumption that the balls of great radius has polynomial volume growth, Definition 2.2.2 in \cite{devyver} allows $R_-\in L^{\frac{N}{2}}$;  whereas in $\textbf{Theorem\;\ref{noyaunul}}$, one needs $R_-\in L^{\frac{N}{2}-\eta}\cap L^{\frac{N}{2}+\eta}$ for some $\eta>0$ with the same condition on the volume.

\noindent Assuming $\textbf{Theorem\;\ref{noyaunul}}$, we are now able to prove $\textbf{Theorem\;\ref{mainresult2}}$.




\begin{proof}[Proof of Theorem \ref{mainresult2}]
According to the commutation formula $\ddd d=d\Delta$, we see that the adjoint operator of $d^*(\ddd)^{-\frac{1}{2}}$ is exactly $d(\Delta)^{-\frac{1}{2}}$. Then $\textbf{Theorem\;\ref{mainresult2}}$ is an immediate consequence of $\textbf{Theorem\;\ref{noyaunul}}$ and $\textbf{Theorem\;\ref{mainresult}}$. 
\end{proof}

\noindent Let us make a comment on the space $Ker_{\mathcal{D}(\overrightarrow{\mathfrak{h}})}(\ddd)$. We consider $\oo\in\mathcal{D}(\overrightarrow{\mathfrak{h}})$. Since $\ddd$ is essentially self-adjoint on $\mathcal{C}_0^{\infty}(\tm)$ (see \cite{strichartz} Section 2), the condition
\begin{equation*}
 (\omega,\ddd\eta)=0,\forall \eta\in\mathcal{C}^{\infty}_0(\tm)
\end{equation*} 

\noindent implies 
\begin{equation*}
 (\omega,\ddd\eta)=0,\forall \eta\in\mathcal{D}(\ddd).
\end{equation*}

\noindent Then $\oo\in \mathcal{D}(\ddd)$ and $\ddd\oo=0$. Therefore $Ker_{\mathcal{D}(\overrightarrow{\mathfrak{h}})}(\ddd)$ is the space of harmonic $L^2$ forms. 

\medskip

\noindent The following proposition proves the first part of $\textbf{Theorem\;\ref{noyaunul}}$.

\begin{prop}\label{halfproof}
Assume that $M$ satisfies $(\ref{D})$, $(\ref{G})$ and that $R_-$ satisfies $($\ref{SC}$)$. Then $Ker_{\mathcal{D}(\overrightarrow{\mathfrak{h}})}(\ddd)=\{0\}$.
\end{prop}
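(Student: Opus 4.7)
The plan is to reduce the problem to showing that any $\omega$ in the kernel is parallel (hence of constant pointwise norm), and then to exclude the existence of a non-zero such $L^2$ form by a volume argument.

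First I would use the observation following the proposition: essential self-adjointness of $\ddd$ on $\mathcal{C}_0^{\infty}(\tm)$ upgrades the weak vanishing $(\omega,\ddd\eta)=0$ for $\eta\in\mathcal{C}_0^{\infty}(\tm)$ to $\omega\in\mathcal{D}(\ddd)$ with $\ddd\omega=0$. Since $\mathcal{D}(\ddd)\subset\mathcal{D}(\overrightarrow{\mathfrak{a}})=\mathcal{D}(\overrightarrow{\mathfrak{h}})$, pairing against $\omega$ itself in the form $\overrightarrow{\mathfrak{a}}$ gives
$$0=(\ddd\omega,\omega)=\overrightarrow{\mathfrak{a}}(\omega,\omega)=\overrightarrow{\mathfrak{h}}(\omega,\omega)-(R_-\omega,\omega).$$
Now (\ref{SC}) extends from $\mathcal{C}_0^{\infty}(\tm)$ to $\mathcal{D}(\overrightarrow{\mathfrak{h}})$ by density in the form norm $\|\cdot\|_{\overrightarrow{\mathfrak{h}}}$ (this is the usual content of the KLMN framework already invoked to define $\overrightarrow{\mathfrak{a}}$), giving $(R_-\omega,\omega)\le\epsilon\,\overrightarrow{\mathfrak{h}}(\omega,\omega)$. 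Combined with the previous identity this yields $(1-\epsilon)\,\overrightarrow{\mathfrak{h}}(\omega,\omega)\le 0$, and since $\epsilon<1$ we conclude
$$\|\nabla\omega\|_2^2+(R_+\omega,\omega)=\overrightarrow{\mathfrak{h}}(\omega,\omega)=0.$$
Both summands are non-negative, so in particular $\nabla\omega=0$, i.e. $\omega$ is parallel, and by metric compatibility of the Levi-Civita connection $|\omega|$ is constant on $M$.

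To finish, I would invoke the standard fact that a non-compact complete connected Riemannian manifold satisfying $(\ref{D})$ has infinite total volume: doubling implies a reverse-doubling lower bound $v(x_0,r)\ge c\,r^{\alpha}$ for some $c,\alpha>0$ and $r$ large, so $\mu(M)=\infty$. Hence $|\omega|$ constant together with $\omega\in L^2(\tm)$ forces $|\omega|\equiv 0$, i.e. $\omega=0$. The only mildly delicate step is the density extension of (\ref{SC}) to all of $\mathcal{D}(\overrightarrow{\mathfrak{h}})$; everything else is essentially bookkeeping.
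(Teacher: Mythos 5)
Your argument is correct, and its first half coincides with the paper's: from $(\ddd\oo,\oo)=0$ and the sub-criticality condition (\ref{SC}) (extended to $\mathcal{D}(\overrightarrow{\mathfrak{h}})$ by density, exactly as the KLMN construction of $\overrightarrow{\mathfrak{a}}$ permits, and as the paper itself does implicitly when it writes $(H\oo,\oo)\le\frac{1}{1-\epsilon}(\ddd\oo,\oo)$) one gets $\overrightarrow{\mathfrak{h}}(\oo,\oo)=0$. Where you diverge is the final step. The paper phrases this as $\oo\in Ker(H^{\frac{1}{2}})=Ker(H)$ and then invokes Lemma \ref{lemma30}, whose proof uses the semigroup identity $e^{-tH}\oo=\oo$, the domination $|e^{-tH}\oo|\le e^{-t\Delta}|\oo|$, the Gaussian bound (\ref{G}) and the reverse doubling estimate to obtain the pointwise bound $|\oo(x)|_x\le C\,v(x,\sqrt{t})^{-\frac{1}{2}}\|\oo\|_2\to 0$. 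You instead read off $\nabla\oo=0$ from $\overrightarrow{\mathfrak{h}}(\oo,\oo)=0$ (legitimate: along an approximating sequence in the form norm the covariant derivatives converge in $L^2$ to the distributional $\nabla\oo$), conclude that $|\oo|$ is constant on the connected manifold $M$, and kill it by infinite volume, which follows from (\ref{D}) together with completeness and non-compactness via the same reverse doubling fact the paper cites from Grigor'yan. The trade-off: your route is more elementary (no heat semigroup) and, notably, does not use the Gaussian bound (\ref{G}) at all, so it proves the proposition under (\ref{D}) and (\ref{SC}) alone; the paper's route packages the conclusion as $Ker(H)=\{0\}$, a lemma it needs again later (in the density argument of Lemma \ref{isomorphism}), which is why it is organized that way. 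Two small points to keep explicit in your write-up: connectedness of $M$ (used both for constancy of $|\oo|$ and for reverse doubling, and assumed implicitly by the paper), and the Fatou-type inequality $\|\nabla\oo\|_2^2+(R_+\oo,\oo)\le\overrightarrow{\mathfrak{h}}(\oo,\oo)$ for $\oo$ in the closed form domain, which is the precise statement your ``bookkeeping'' step needs.
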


\begin{proof}
\noindent Any $\oo$ in $Ker_{\mathcal{D}(\overrightarrow{\mathfrak{h}})}(\ddd)$ satisfies for all $\eta\in\mathcal{C}^{\infty}_0(\tm)$, $(\ddd\omega,\eta)=0$, hence, by a density argument $(\ddd\oo,\oo)=0$. If $R_-$ satisfies $($\ref{SC}$)$, we have $(H\oo,\oo)\le\frac{1}{1-\epsilon}(\ddd\oo,\oo)=0$, which yields $\oo\in Ker(H^{\frac{1}{2}})$. According to $\textbf{Lemma\,\ref{lemma30}}$ below, we deduce that $\oo=0$. Thus $Ker_{\mathcal{D}(\overrightarrow{\mathfrak{h}})}(\ddd)=\{0\}$. 
\end{proof}

\noindent The following result is well-known but we have decided to give its proof for the sake of completeness.

\begin{lem}\label{lemma30}
Assume that (\ref{D}) and (\ref{G}) are satisfied. Then $Ker(H)=\{0\}$.
\end{lem}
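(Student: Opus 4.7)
The plan is to exploit the fact that $H = \nabla^*\nabla + R_+$ is dominated by the scalar Laplacian at the semigroup level, and then use the Gaussian bound (G) together with doubling (D) to force any $L^2$ element of $\mathrm{Ker}(H)$ to vanish pointwise.

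First, let $\omega \in \mathrm{Ker}(H)$. Since $H$ is non-negative and self-adjoint, $\omega \in L^2(\tm)$ and $e^{-tH}\omega = \omega$ for every $t \ge 0$. The Bochner formula combined with $R_+ \ge 0$ yields the Kato-type domination
\begin{equation*}
|e^{-tH}\omega(x)|_x \;\le\; (e^{-t\Delta}|\omega|)(x) \quad \text{for a.e.~} x \in M, \; \forall t > 0,
\end{equation*}
which is the same ingredient already used in the proof of \textbf{Lemma \ref{GN2q}} and is standard (see \cite{berard}). Combining this with the preceding identity gives
\begin{equation*}
|\omega(x)|_x \;\le\; (e^{-t\Delta}|\omega|)(x) \quad \text{for a.e.~} x \in M, \; \forall t > 0.
\end{equation*}

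Next, I would estimate the right-hand side as $t \to \infty$. By Cauchy--Schwarz,
\begin{equation*}
(e^{-t\Delta}|\omega|)(x) \;\le\; \|p_t(x,\cdot)\|_{L^2(M)}\,\|\omega\|_2,
\end{equation*}
and the Gaussian upper bound $(\ref{G})$ together with the annular decomposition argument used to prove $(\ref{equat3})$ (that is, decomposing $M$ into annuli $\{k\sqrt{t} \le \rho(x,y) \le (k+1)\sqrt{t}\}$ and invoking doubling) yields
\begin{equation*}
\|p_t(x,\cdot)\|_{L^2(M)}^2 \;\le\; \frac{C}{v(x,\sqrt{t})}.
\end{equation*}
Hence $|\omega(x)|_x \le C\,\|\omega\|_2\,v(x,\sqrt{t})^{-1/2}$ for every $t > 0$ and a.e.~$x$.

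To conclude, I need $v(x,\sqrt{t}) \to \infty$ as $t \to \infty$. This is where the non-compactness of $M$ enters: a complete non-compact manifold satisfying $(\ref{D})$ and $(\ref{G})$ has infinite volume (otherwise the Gaussian bound would force the heat semigroup to have zero as an eigenvalue with an $L^2$ eigenfunction, contradicting the conservation property together with $\mu(M)<\infty$ only in the compact case). Granting this, $v(x,r) \nearrow \mu(M) = \infty$ as $r \to \infty$ by monotone convergence, and letting $t \to \infty$ in the previous inequality forces $|\omega(x)|_x = 0$ for a.e.~$x$, i.e.~$\omega = 0$.

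The main obstacle in this plan is the justification of $v(x,\sqrt{t}) \to \infty$; this is a well-known consequence of non-compactness together with $(\ref{D})$ and $(\ref{G})$, but if one prefers a self-contained route one may instead argue directly that $e^{-t\Delta}$ applied to an $L^2$ function tends weakly to $0$ as $t \to \infty$ whenever $0$ is not an $L^2$-eigenvalue of $\Delta$, a fact that itself follows from the Gaussian upper bound via the spectral theorem. Either way, the routine machinery reduces the statement to the domination inequality, which is the only genuinely differential-geometric input.
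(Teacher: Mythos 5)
Your argument coincides with the paper's proof up to the pointwise estimate $|\omega(x)|_x\le C\|\omega\|_2\,v(x,\sqrt{t})^{-1/2}$: same use of $e^{-tH}\omega=\omega$, the domination $|e^{-tH}\omega|\le e^{-t\Delta}|\omega|$, the Gaussian bound (\ref{G}), Cauchy--Schwarz and the annuli decomposition. The difference, and the genuine gap, is in how you let $t\to\infty$. The paper closes the argument with the reverse doubling inequality (\ref{reverse}): on a connected, complete, non-compact manifold satisfying (\ref{D}) one has $v(x,R)/v(x,r)\ge c(R/r)^{D'}$ (Grigor'yan), hence $v(x,\sqrt{t})\ge c\,t^{D'/2}v(x,1)\to\infty$. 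This uses only (\ref{D}) and non-compactness; the Gaussian bound plays no role in that step. Your two proposed justifications of ``$v(x,\sqrt t)\to\infty$'' do not work as stated. First, the parenthetical contradiction argument (``otherwise (\ref{G}) would force $0$ to be an $L^2$-eigenvalue, contradicting conservation'') is circular: if $\mu(M)<\infty$ the constants are indeed $L^2$ harmonic, and this is perfectly compatible with (\ref{G}) and with stochastic completeness, so no contradiction is reached; what excludes finite volume is precisely reverse doubling. Second, the alternative route rests on the claim that ``$0$ is not an $L^2$-eigenvalue of $\Delta$ follows from the Gaussian upper bound via the spectral theorem''; this is false, since estimates of the form (\ref{G}) together with (\ref{D}) hold in finite-volume (e.g.\ compact-type) situations where $0$ is an eigenvalue. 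Ruling out nonzero $L^2$ harmonic functions again amounts to showing $\mu(M)=\infty$, so this route begs the same question.

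The fix is exactly the paper's one sentence: invoke the reverse volume lower bound for connected complete non-compact manifolds satisfying (\ref{D}), which gives $|\omega(x)|_x\le C t^{-D'/4}v(x,1)^{-1/2}\|\omega\|_2$ for $t\ge 1$, and let $t\to\infty$. With that replacement your proof is complete and is essentially identical to the one in the paper.
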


\begin{proof}
We consider $\oo\in Ker(H)$, that is $\oo\in\mathcal{D}(H)$ and $H\oo=0$. We then have for all $t\ge 0$
\begin{equation}\label{equa20}
e^{-tH}\oo=\oo.
\end{equation}

\noindent Noticing that we have the domination $|e^{-tH}\oo|\le e^{-t\Delta}|\oo|$ and using (\ref{equa20}) and (\ref{G}), we obtain for all $x\in M$ and $t\ge 0$
\begin{equation*}
|\oo(x)|_x\le \frac{C}{v(x,\sqrt{t})}\int_M \exp(-c\frac{\rho^2(x,y)}{t})|\oo(y)|_y\,d\mu.
\end{equation*} 

\noindent The H\"older inequality yields
\begin{equation}\label{equa21}
|\oo(x)|_x\le \frac{C}{v(x,\sqrt{t})}\left(\int_M \exp(-2c\frac{\rho^2(x,y)}{t})d\mu\right)^{\frac{1}{2}}\|\oo\|_2.
\end{equation}

\noindent Using (\ref{equat3}) in (\ref{equa21}) leads to
\begin{equation}\label{equa23}
|\oo(x)|_x\le \frac{C}{\sqrt{v(x,\sqrt{t})}}\|\oo\|_2.
\end{equation}

\noindent Since the manifold $M$ is connected, complete, non-compact and satisfies the doubling volume property (\ref{D}), it follows from \cite{grigor} p.412 that there exists a constant $D'>0$ such that for all $x\in M$ and $0<r\le R$ 
\begin{equation}\label{reverse}
\frac{v(x,R)}{v(x,r)}\ge c\left(\frac{R}{r}\right)^{D'}.
\end{equation}

\noindent We obtain from (\ref{equa23}) and (\ref{reverse}) that for all $t\ge 1$
\begin{equation*}
|\oo(x)|_x\le \frac{C}{t^{\frac{D'}{4}}\sqrt{v(x,1)}}\|\oo\|_2.
\end{equation*}

\noindent Letting $t$ tend to infinity, we deduce that for all $x\in M$, $|\oo(x)|_x=0$ and then that $Ker(H)=\{0\}$.

\end{proof}

\noindent Note that the assumption $\|R_-^{\frac{1}{2}}\|_{vol}<\infty$ is not necessary in the proof of $\textbf{Proposition\;\ref{halfproof}}$ but will be used to prove the converse of $\textbf{Theorem\;\ref{noyaunul}}$.

\medskip

\noindent Before giving the other half of the proof of $\textbf{Theorem\,\ref{noyaunul}}$, we need the following two results.

\begin{lem}\label{lemma}
Assume that (\ref{D}) and (\ref{G}) are satisfied. Then there exists a constant $C\ge 0$ such that
\begin{equation*}
\|R_-^{\frac{1}{2}}H^{-\frac{1}{2}}\|_{2-2}\le C \|R_-^{\frac{1}{2}}\|_{vol}
\end{equation*}
\noindent and
\begin{equation*}
\|H^{-\frac{1}{2}}R_-^{\frac{1}{2}}\|_{2-2}\le C \|R_-^{\frac{1}{2}}\|_{vol}.
\end{equation*}
\end{lem}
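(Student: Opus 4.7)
The plan is to use the subordination formula
\[
H^{-1/2} = \pi^{-1/2}\int_0^\infty e^{-tH}\,t^{-1/2}\,dt
\]
to reduce the first bound to a $t$-integrated pointwise estimate:
\[
\|R_-^{1/2}H^{-1/2}\oo\|_2 \le \pi^{-1/2}\int_0^\infty \|R_-^{1/2}e^{-tH}\oo\|_2\,\frac{dt}{\sqrt{t}}.
\]
For each fixed $t>0$ I would bound $\|R_-^{1/2}e^{-tH}\oo\|_2$ by combining the domination $|e^{-tH}\oo(x)|_x\le (e^{-t\Delta}|\oo|)(x)$ (recalled in the proof of Lemma~\ref{GN2q}) with Hölder's inequality at exponents $r/2$ and $r/(r-2)$ for $r\in\{r_1,r_2\}$, artificially inserting the weight $v(\cdot,\sqrt{t})^{1/r}$:
\[
\|R_-^{1/2}e^{-tH}\oo\|_2^2 \le \bigl\|R_-^{1/2}/v(\cdot,\sqrt{t})^{1/r}\bigr\|_r^2 \,\bigl\|v(\cdot,\sqrt{t})^{1/r}e^{-t\Delta}|\oo|\bigr\|_{2r/(r-2)}^2.
\]

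The main technical point is then to establish the weighted $L^2$--$L^{2r/(r-2)}$ bound
\[
\bigl\|v(\cdot,\sqrt{t})^{1/r}e^{-t\Delta}f\bigr\|_{2r/(r-2)} \le C\|f\|_2, \qquad r>2,
\]
with $C$ independent of $t$ and $f$. From $(\ref{G})$ and the annular decomposition $\int e^{-c\rho^2(x,y)/t}d\mu(y)\le Cv(x,\sqrt{t})$ (already used throughout Section~3), Cauchy--Schwarz on the heat kernel yields the pointwise estimate $(e^{-t\Delta}f)(x)\le Cv(x,\sqrt{t})^{-1/2}\|f\|_2$. Writing $|e^{-t\Delta}f|^{2r/(r-2)}=|e^{-t\Delta}f|^{4/(r-2)}\cdot|e^{-t\Delta}f|^2$ and estimating only the first factor pointwise causes the weight $v(\cdot,\sqrt{t})^{2/(r-2)}$ on the left-hand side to cancel exactly with the $v(\cdot,\sqrt{t})^{-2/(r-2)}$ coming from the pointwise bound; the $L^2$-contractivity of $e^{-t\Delta}$ then closes the estimate.

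Plugging this back and splitting the $t$-integral at $t=1$, using $r=r_1$ on $(0,1)$ and $r=r_2$ on $(1,\infty)$, gives the first inequality $\|R_-^{1/2}H^{-1/2}\|_{2-2}\le C\|R_-^{1/2}\|_{vol}$. The second inequality follows by duality: both $R_-^{1/2}$ and $H^{-1/2}$ are self-adjoint on $L^2(\tm)$ (the former because the Ricci endomorphism is pointwise symmetric, the latter because $H$ is positive self-adjoint), so $(R_-^{1/2}H^{-1/2})^*=H^{-1/2}R_-^{1/2}$ and adjoint operators have equal $L^2$--$L^2$ norms.

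The main obstacle is the weighted $L^2$--$L^{2r/(r-2)}$ estimate for $e^{-t\Delta}$: the precise algebraic balance that produces the exact cancellation of $v(\cdot,\sqrt{t})$-weights is the only non-routine arithmetic in the argument, and it is precisely why the exponent $2r/(r-2)$ appears naturally and why $r>2$ is needed. All the other steps -- subordination, Hölder, and duality -- are standard.
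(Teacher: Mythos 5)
Your proposal is correct and follows essentially the same route as the paper: the subordination formula, Hölder's inequality with the inserted weight $v(\cdot,\sqrt{t})^{1/r_i}$ (using $r_1$ on $(0,1)$ and $r_2$ on $(1,\infty)$), the domination $|e^{-tH}\oo|\le e^{-t\Delta}|\oo|$, and the uniform weighted bound $\|v(\cdot,\sqrt{t})^{1/r}e^{-t\Delta}\|_{2-\frac{2r}{r-2}}\le C$, which you prove directly from (\ref{G}) and doubling where the paper simply cites Proposition 2.9 of \cite{ao}. The only cosmetic difference is the second inequality: you take adjoints of the composite operator $R_-^{1/2}H^{-1/2}$ (legitimate, modulo the routine domain checks for these unbounded factors, and indeed the paper records this adjoint identity as a corollary), whereas the paper rather repeats the semigroup estimate using the dual weighted bounds $\|e^{-tH}v(\cdot,\sqrt{t})^{1/r_i}\|_{\frac{2r_i}{r_i+2}-2}\le C$.
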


\begin{proof}
\noindent Writing $H^{-\frac{1}{2}}=\displaystyle{\frac{1}{2\sqrt{\pi}}\int_0^{\infty}}e^{-tH}\dfrac{dt}{\sqrt{t}}$ and using the H\"older inequality, we obtain 
\begin{align*}
 &\|R_-^{\frac{1}{2}}H^{-\frac{1}{2}}\|_{2-2}\\
 &\le C\int_0^1 \left\| \frac{R_-^{\frac{1}{2}}}{v(.,\sqrt{t})^{\frac{1}{r_1}}}v(.,\sqrt{t})^{\frac{1}{r_1}}e^{-tH}\right\|_{2-2}\frac{dt}{\sqrt{t}}+C\int_1^{\infty}\left\| \frac{R_-^{\frac{1}{2}}}{v(.,\sqrt{t})^{\frac{1}{r_2}}}v(.,\sqrt{t})^{\frac{1}{r_2}}e^{-tH}\right\|_{2-2}\frac{dt}{\sqrt{t}}\\
 &\le C\int_0^1 \left\| \frac{R_-^{\frac{1}{2}}}{v(.,\sqrt{t})^{\frac{1}{r_1}}}\right\|_{r_1}\left\|v(.,\sqrt{t})^{\frac{1}{r_1}}e^{-tH}\right\|_{2-\frac{2r_1}{r_1-2}}\frac{dt}{\sqrt{t}}\\
 &\;\;\;\;\;\;\;\;\;\;\;\;\;\;\;\;\;\;\;\;\;\;\;\;\;\;\;\;\;\;\;\;\;\;\;\;\;\;\;\;\;\;\;\;\;\;\;\; + C\int_1^{\infty}\left\| \frac{R_-^{\frac{1}{2}}}{v(.,\sqrt{t})^{\frac{1}{r_2}}}\right\|_{r_2}\left\|v(.,\sqrt{t})^{\frac{1}{r_2}}e^{-tH}\right\|_{2-\frac{2r_2}{r_2-2}}\frac{dt}{\sqrt{t}}
\end{align*}

\noindent and similarly
\begin{align*}
 &\|H^{-\frac{1}{2}}R_-^{\frac{1}{2}}\|_{2-2}\\
 &\le C\int_0^1 \left\|e^{-tH}v(.,\sqrt{t})^{\frac{1}{r_1}}\right\|_{\frac{2r_1}{r_1+2}-2}\left\| \frac{R_-^{\frac{1}{2}}}{v(.,\sqrt{t})^{\frac{1}{r_1}}}\right\|_{r_1}\frac{dt}{\sqrt{t}}\\
 &\;\;\;\;\;\;\;\;\;\;\;\;\;\;\;\;\;\;\;\;\;\;\;\;\;\;\;\;\;\;\;\;\;\;\;\;\;\;\;\;\;\;\;\;\;\;\;\; + C\int_1^{\infty}\left\|e^{-tH}v(.,\sqrt{t})^{\frac{1}{r_2}}\right\|_{\frac{2r_2}{r_2+2}-2}\left\| \frac{R_-^{\frac{1}{2}}}{v(.,\sqrt{t})^{\frac{1}{r_2}}}\right\|_{r_2}\frac{dt}{\sqrt{t}}.
\end{align*}

\noindent The assumptions $(\ref{D})$ and $(\ref{G})$ allow us to use Proposition 2.9 in \cite{ao} for $\Delta$. Then noticing we have the domination $|e^{-tH}\oo|\le e^{-t\Delta}|\oo|$, for all $\oo\in\mathcal{C}_0^{\infty}(\tm)$ leads to the following estimates
\begin{equation*}
\|v(.,\sqrt{t})^{\frac{1}{p}-\frac{1}{q}}e^{-tH}\|_{p-q}\le C,\;\forall\; 1<p\le q<\infty,
\end{equation*}
\noindent where $C$ is a non-negative constant depending on $p$, $q$, (\ref{D}) and (\ref{G}). By duality
\begin{equation*}
\|e^{-tH}v(.,\sqrt{t})^{\frac{1}{p}-\frac{1}{q}}\|_{p-q}\le C,\;\forall\; 1<p\le q<\infty.
\end{equation*}

\noindent Since for $i=1,2$ we have $\frac{1}{r_i}=\frac{1}{2}-\frac{r_i-2}{2r_i}$ and $\frac{1}{r_i}=\frac{r_i+2}{2r_i}-\frac{1}{2}$, we obtain the desired result.
\end{proof}

\noindent As a consequence

\begin{cor}
The $L^2$-adjoint of the operator $R_-^{\frac{1}{2}}H^{-\frac{1}{2}}$ is $H^{-\frac{1}{2}}R_-^{\frac{1}{2}}$.
\end{cor}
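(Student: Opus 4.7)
The plan is to verify the adjoint identity
\begin{equation*}
\bigl(R_-^{\frac{1}{2}}H^{-\frac{1}{2}}u,v\bigr) = \bigl(u, H^{-\frac{1}{2}}R_-^{\frac{1}{2}}v\bigr)
\end{equation*}
on a dense subspace of $L^2(\tm)$ and then extend it to all of $L^2$ by continuity, using the fact that both operators are bounded (by the preceding Lemma). The natural dense subspace is $\mathcal{C}_0^{\infty}(\tm)$: for such $v$, the form $R_-^{\frac{1}{2}}v$ is well-defined pointwise (since $R_-$ is a continuous bundle endomorphism, hence locally bounded) and belongs to $L^2(\tm)$.

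First I would replace $H^{-\frac{1}{2}}$ by its subordination integral $\frac{1}{2\sqrt{\pi}}\int_0^{\infty} e^{-tH}\frac{dt}{\sqrt{t}}$, which is already used in the proof of the previous Lemma. Then for $u,v\in\mathcal{C}_0^{\infty}(\tm)$ I would write
\begin{equation*}
\bigl(R_-^{\frac{1}{2}}H^{-\frac{1}{2}}u,v\bigr) = \frac{1}{2\sqrt{\pi}}\int_0^{\infty}\bigl(R_-^{\frac{1}{2}}e^{-tH}u,v\bigr)\frac{dt}{\sqrt{t}},
\end{equation*}
apply the pointwise self-adjointness of $R_-^{\frac{1}{2}}(x)$ on $T_x^*M$ to move $R_-^{\frac{1}{2}}$ onto $v$, then use the self-adjointness of $e^{-tH}$ to move the semigroup onto $v$ as well, and finally recognise the resulting integral as $(u,H^{-\frac{1}{2}}R_-^{\frac{1}{2}}v)$.

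The main obstacle is the justification of the Fubini-type interchange of $\int_0^{\infty}\dots dt/\sqrt{t}$ with the $L^2$ inner product, together with the fact that the pointwise identity $(R_-^{\frac{1}{2}}e^{-tH}u,v) = (e^{-tH}u,R_-^{\frac{1}{2}}v)$ is only formally valid when $e^{-tH}u\in\mathcal{D}(R_-^{\frac{1}{2}})$. The Fubini step is handled by the estimates of the previous Lemma: the scalar integrands are dominated in absolute value by $\|R_-^{\frac{1}{2}}v\|_2\|e^{-tH}u\|_2$ split on $(0,1)$ and $(1,\infty)$ in the same way as the operator norm was split there, yielding an integrable majorant. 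The domain issue is settled by a truncation argument: replace $R_-$ by its truncation $R_-^{(n)} := R_-\wedge n$ (so $(R_-^{(n)})^{\frac{1}{2}}$ is a bounded bundle endomorphism), carry out the computation unambiguously for each $n$, and pass to the limit using monotone/dominated convergence controlled by Lemma above applied to $R_-^{(n)}$ (whose associated norm is bounded by that of $R_-$).

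Finally, since the Lemma gives $\|R_-^{\frac{1}{2}}H^{-\frac{1}{2}}\|_{2-2}<\infty$ and $\|H^{-\frac{1}{2}}R_-^{\frac{1}{2}}\|_{2-2}<\infty$, both sides of the identity are continuous bilinear forms on $L^2(\tm)\times L^2(\tm)$. Equality on the dense set $\mathcal{C}_0^{\infty}(\tm)\times\mathcal{C}_0^{\infty}(\tm)$ therefore extends to all $u,v\in L^2(\tm)$, which is exactly the statement that $H^{-\frac{1}{2}}R_-^{\frac{1}{2}}=(R_-^{\frac{1}{2}}H^{-\frac{1}{2}})^*$.
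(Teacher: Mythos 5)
Your argument is correct, and it is essentially the route the paper intends: the corollary is stated as an immediate consequence of the preceding Lemma, whose proof already represents $H^{-\frac{1}{2}}$ by the subordination integral and gives the operator-norm bounds that make both compositions bounded, so that the adjoint identity follows from the fiberwise symmetry of $R_-^{\frac{1}{2}}$ and the self-adjointness of $e^{-tH}$ plus density. Your truncation and Fubini precautions are harmless but not really needed, since the integrable operator-norm bounds from the Lemma let one define $R_-^{\frac{1}{2}}H^{-\frac{1}{2}}$ and $H^{-\frac{1}{2}}R_-^{\frac{1}{2}}$ as norm-convergent Bochner integrals, against which the inner product commutes automatically.
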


\noindent We now follow the ideas of Devyver to prove $\textbf{Theorem\,\ref{noyaunul}}$. Even if the two lemmas below are known, we give their proofs for the sake of completeness. The following lemma is similar to Lemma 1 in \cite{devyver}.

\begin{lem}\label{isomorphism}
Let $\Lambda$ denote the $self-adjoint$ operator $H^{-\frac{1}{2}}R_-H^{-\frac{1}{2}}=(R_-^{\frac{1}{2}}H^{-\frac{1}{2}})^*(R_-^{\frac{1}{2}}H^{-\frac{1}{2}})$ acting on $L^2(\tm)$. Assume that (\ref{D}) and (\ref{G}) are satisfied. Then the operator $H^{\frac{1}{2}}$ is an isomorphism from $Ker_{\mathcal{D}(\overrightarrow{\mathfrak{h}})}(\ddd)$ to $Ker_{L^2}(I-\Lambda)$.
\end{lem}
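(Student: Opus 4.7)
The plan is to verify that the linear map
\begin{equation*}
\Phi: \mathcal{D}(\overrightarrow{\mathfrak{h}}) \longrightarrow L^2(\tm), \qquad \omega \longmapsto H^{\frac{1}{2}}\omega
\end{equation*}
restricts to a bijection between $Ker_{\mathcal{D}(\overrightarrow{\mathfrak{h}})}(\ddd)$ and $Ker_{L^2}(I - \Lambda)$. Since $\mathcal{D}(\overrightarrow{\mathfrak{h}}) = \mathcal{D}(H^{\frac{1}{2}})$ the map is well defined, and its injectivity on the whole form domain is immediate from $Ker(H^{\frac{1}{2}}) = Ker(H) = \{0\}$, which is $\textbf{Lemma\;\ref{lemma30}}$. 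The same fact gives that $H^{\frac{1}{2}}$ has dense range in $L^2(\tm)$, which will be the bridge to convert form identities on $\mathcal{D}(H^{\frac{1}{2}})$ into identities in $L^2$.

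For the forward inclusion, I would start from $\omega \in Ker_{\mathcal{D}(\overrightarrow{\mathfrak{h}})}(\ddd)$. The essential self-adjointness of $\ddd$ on $\mathcal{C}_0^{\infty}(\tm)$ recalled just before $\textbf{Proposition\;\ref{halfproof}}$ promotes the weak condition $(\omega, \ddd\eta) = 0$ for $\eta \in \mathcal{C}_0^{\infty}(\tm)$ to the form identity $\overrightarrow{\mathfrak{h}}(\omega, \eta) = (R_-\omega, \eta)_{L^2}$ for all $\eta \in \mathcal{D}(\overrightarrow{\mathfrak{h}})$. By $\textbf{Lemma\;\ref{lemma}}$, the operator $T := R_-^{\frac{1}{2}}H^{-\frac{1}{2}}$ extends boundedly to $L^2$; a closedness argument applied to spectral truncations of $H$ would then show that $\mathcal{D}(H^{\frac{1}{2}}) \subseteq \mathcal{D}(R_-^{\frac{1}{2}})$ with $R_-^{\frac{1}{2}}\xi = T(H^{\frac{1}{2}}\xi)$ for every $\xi \in \mathcal{D}(H^{\frac{1}{2}})$. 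Setting $u := H^{\frac{1}{2}}\omega$, the identity rewrites as
\begin{equation*}
(u, H^{\frac{1}{2}}\eta)_{L^2} = (Tu, T(H^{\frac{1}{2}}\eta))_{L^2} = (\Lambda u, H^{\frac{1}{2}}\eta)_{L^2}, \qquad \forall \eta \in \mathcal{D}(H^{\frac{1}{2}}),
\end{equation*}
and the density of $H^{\frac{1}{2}}(\mathcal{D}(H^{\frac{1}{2}}))$ in $L^2$ forces $u = \Lambda u$, so $\Phi(\omega) \in Ker_{L^2}(I-\Lambda)$.

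For the reverse inclusion, I would start from $u \in Ker_{L^2}(I-\Lambda)$ and aim to produce $\omega \in \mathcal{D}(H^{\frac{1}{2}})$ with $H^{\frac{1}{2}}\omega = u$; running the forward calculation in reverse will show that such $\omega$ automatically satisfies $\overrightarrow{\mathfrak{h}}(\omega, \eta) = (R_-\omega, \eta)$ for all $\eta \in \mathcal{D}(\overrightarrow{\mathfrak{h}})$, which via the essential self-adjointness of $\ddd$ places $\omega$ in $Ker_{\mathcal{D}(\overrightarrow{\mathfrak{h}})}(\ddd)$. The main obstacle is therefore to verify that $u \in \mathrm{Range}(H^{\frac{1}{2}}) = \mathcal{D}(H^{-\frac{1}{2}})$. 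My plan is to exploit the factorization $u = T^{*}(Tu)$ inherited from $\Lambda = T^{*}T$: since $T^{*}$ is the $L^2$-closure of $H^{-\frac{1}{2}}R_-^{\frac{1}{2}}$, on a dense subdomain $T^{*}$ lands in $\mathrm{Range}(H^{-\frac{1}{2}})$, and combining this with $u = \Lambda u$ and applying a Riesz-representation argument in the form Hilbert space $(\mathcal{D}(H^{\frac{1}{2}}), \|\cdot\|_{\overrightarrow{\mathfrak{h}}})$ to the antilinear functional $\eta \mapsto (u, H^{\frac{1}{2}}\eta)$ on $\mathcal{D}(H^{\frac{1}{2}})$, which is continuous of norm at most $\|u\|_2$ with respect to the form seminorm $\sqrt{\overrightarrow{\mathfrak{h}}}$, should produce the required $\omega$ and close the proof.
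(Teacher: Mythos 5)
Your forward inclusion is correct and is essentially the paper's own argument in different clothing: the paper writes $\ddd\eta=H^{\frac{1}{2}}(I-\Lambda)H^{\frac{1}{2}}\eta$ for $\eta\in\mathcal{C}_0^{\infty}(\tm)$ and, using the density of $H^{\frac{1}{2}}(\mathcal{C}_0^{\infty}(\tm))$ in $L^2(\tm)$ (proved via $Ker(H)=\{0\}$, i.e. \textbf{Lemma \ref{lemma30}}), obtains the equivalence $\oo\in Ker_{\mathcal{D}(\overrightarrow{\mathfrak{h}})}(\ddd)\iff H^{\frac{1}{2}}\oo\in Ker_{L^2}(I-\Lambda)$ for $\oo\in\mathcal{D}(\overrightarrow{\mathfrak{h}})$; your derivation through $T=R_-^{\frac{1}{2}}H^{-\frac{1}{2}}$ is the same computation, and your use of \textbf{Lemma \ref{lemma}} to give $R_-^{\frac{1}{2}}\xi=T(H^{\frac{1}{2}}\xi)$ on $\mathcal{D}(H^{\frac{1}{2}})$ is fine.

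The genuine gap is in your reverse inclusion. You correctly identify that the point is to show $u\in\mathrm{Range}(H^{\frac{1}{2}})=\mathcal{D}(H^{-\frac{1}{2}})$ for $u\in Ker_{L^2}(I-\Lambda)$, but neither of your two ingredients delivers this. First, the remark that $T^{*}$ maps a dense subdomain into $\mathrm{Range}(H^{-\frac{1}{2}})$ is doubly off target: $\mathrm{Range}(H^{-\frac{1}{2}})$ equals $\mathcal{D}(H^{\frac{1}{2}})$, which is not the set $\mathrm{Range}(H^{\frac{1}{2}})$ you need, and in any case knowing where $T^{*}$ sends a dense family of vectors says nothing about $T^{*}(Tu)$ for the particular vector $Tu$. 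Second, the Riesz-representation step is vacuous: the bound $|(u,H^{\frac{1}{2}}\eta)|\le\|u\|_2\,\|H^{\frac{1}{2}}\eta\|_2$ holds for \emph{every} $u\in L^2(\tm)$ by Cauchy--Schwarz, so the continuity of $\eta\mapsto(u,H^{\frac{1}{2}}\eta)$ with respect to $\sqrt{\overrightarrow{\mathfrak{h}}(\eta,\eta)}=\|H^{\frac{1}{2}}\eta\|_2$ uses nothing about the relation $u=\Lambda u$; if such an argument produced $\oo\in\mathcal{D}(H^{\frac{1}{2}})$ with $H^{\frac{1}{2}}\oo=u$, it would prove $\mathrm{Range}(H^{\frac{1}{2}})=L^2(\tm)$, which fails whenever $0$ lies in the spectrum of $H$ --- the typical situation on a non-compact manifold. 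Moreover, Riesz representation in the completion of $(\mathcal{D}(H^{\frac{1}{2}}),\sqrt{\overrightarrow{\mathfrak{h}}})$ only yields an element of that abstract completion, which need not embed back into $L^2(\tm)$, let alone into $\mathcal{D}(\overrightarrow{\mathfrak{h}})$. What would characterize $u\in\mathcal{D}(H^{-\frac{1}{2}})$ is an estimate of the form $|(u,\eta)|\le C\sqrt{\overrightarrow{\mathfrak{h}}(\eta,\eta)}$ (pairing $u$ with $\eta$ itself, not with $H^{\frac{1}{2}}\eta$), and the identity $u=T^{*}Tu$ does not obviously yield it. Note that the paper's proof does not attempt any such range argument: it consists of the two-sided equivalence recalled above together with the injectivity of $H^{\frac{1}{2}}$, so the extra step you try to supply is precisely the delicate point, and as written it does not work.
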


\begin{proof}
We consider $\oo\in Ker_{\mathcal{D}(\overrightarrow{\mathfrak{h}})}(\ddd)$, that is, $\oo\in \mathcal{D}(\overrightarrow{\mathfrak{h}})$ such that for all $\eta\in\mathcal{C}_0^{\infty}(\tm)$
\begin{equation*}
(\oo,\ddd\eta)=0.
\end{equation*}

\noindent Let $\eta\in\mathcal{C}_0^{\infty}(\tm)$. We write $\ddd\eta=H^{\frac{1}{2}}(I-\Lambda)H^{\frac{1}{2	}}\eta$. Since $\mathcal{D}(\overrightarrow{\mathfrak{h}})=\mathcal{D}(H^{\frac{1}{2}})$, we may write
\begin{equation*}
\oo\in Ker_{\mathcal{D}(\overrightarrow{\mathfrak{h}})}(\ddd)\iff \forall \eta \in\mathcal{C}_0^{\infty}(\tm), (H^{\frac{1}{2}}\oo,(I-\Lambda)H^{\frac{1}{2}}\eta)=0.
\end{equation*}

\noindent We claim that $H^{\frac{1}{2}}(\mathcal{C}_0^{\infty}(\tm))$ is dense in $L^2(\tm)$. Assuming the claim, we obtain 
\begin{equation*}
\oo\in Ker_{\mathcal{D}(\overrightarrow{\mathfrak{h}})}(\ddd)\iff (H^{\frac{1}{2}}\oo,(I-\Lambda)\eta)=0,\forall\eta\in L^2(\tm).
\end{equation*}

\noindent Noticing that $I-\Lambda$ is self-adjoint on $L^2(\tm)$, we deduce that 
\begin{equation*}
\oo\in Ker_{\mathcal{D}(\overrightarrow{\mathfrak{h}})}(\ddd)\iff H^{\frac{1}{2}}\oo\in Ker_{L^2}(I-\Lambda).
\end{equation*}

\noindent Now we prove the claim. We consider $u=H^{\frac{1}{2}}v\in Im(H^{\frac{1}{2}})$ satisfying
\begin{equation*}
(u,H^{\frac{1}{2}}w)=0, \forall w\in\mathcal{C}_0^{\infty}(\tm).
\end{equation*}

\noindent Then for all $w\in\mathcal{C}_0^{\infty}(\tm)$, we have $\overrightarrow{\mathfrak{h}}(v,w)=0$. Therefore $v\in\mathcal{D}(H)$ and $Hv=0$, that is $v\in Ker(H)$. Since $Ker(H)=\{0\}$ (see $\textbf{Lemma\,\ref{lemma30}}$ below), we obtain $v=0$ in $\mathcal{D}(H)$ and then $u=0$ in $Im(H^{\frac{1}{2}})$. This shows that $H^{\frac{1}{2}}(\mathcal{C}_0^{\infty}(\tm))$ is dense in $Im(H^{\frac{1}{2}})$. Furthermore, $Im(H^{\frac{1}{2}})$ is dense in $L^2(\tm)$ because $H^{\frac{1}{2}}$ is self-adjoint and $Ker(H^{\frac{1}{2}})=\{0\}$. Hence we deduce that $H^{\frac{1}{2}}(\mathcal{C}_0^{\infty}(\tm))$ is dense in $L^2(\tm)$.
\end{proof}

\noindent The following lemma is similar to Proposition 1.4 and Theorem 1.5 in \cite{carron2}.

\begin{lem}\label{compact}
Assume that the manifold $M$ satisfies $(\ref{D})$, $(\ref{G})$ and $\|R_-^{\frac{1}{2}}\|_{vol}<\infty$. Then $\Lambda$ is a compact operator on $L^2(\tm)$.
\end{lem}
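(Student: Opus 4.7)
The operator $\Lambda$ factors as $\Lambda = T^* T$ with $T = R_-^{\frac{1}{2}} H^{-\frac{1}{2}}$, which by $\textbf{Lemma \ref{lemma}}$ is bounded on $L^2(\tm)$ with norm controlled by $\|R_-^{\frac{1}{2}}\|_{vol}$. Since a bounded operator $T$ has compact $T^*T$ if and only if $T$ itself is compact, the proof reduces to showing $T$ is compact. The plan is a double approximation of $T$ by compact operators in the operator-norm topology: first truncate $R_-^{\frac{1}{2}}$ spatially by restricting to a large ball, then truncate the $t$-integral defining $H^{-\frac{1}{2}}$, and finally exhibit the innermost operators as Hilbert--Schmidt.

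Fix $x_0 \in M$ and set $B_n = B(x_0, n)$. Define $T_n := \chi_{B_n} R_-^{\frac{1}{2}} H^{-\frac{1}{2}}$. Applying $\textbf{Lemma \ref{lemma}}$ with $R_-^{\frac{1}{2}}$ replaced by $\chi_{B_n^c} R_-^{\frac{1}{2}}$ gives
\[
\|T - T_n\|_{2-2} \;\le\; C \|\chi_{B_n^c} R_-^{\frac{1}{2}}\|_{vol}.
\]
For each fixed $t > 0$, $\|\chi_{B_n^c} R_-^{\frac{1}{2}}/v(.,\sqrt{t})^{1/r_i}\|_{r_i} \to 0$ by the ordinary dominated convergence theorem, and the hypothesis $\|R_-^{\frac{1}{2}}\|_{vol} < \infty$ provides an integrable dominating function for the $dt/\sqrt{t}$ integrals, so $\|\chi_{B_n^c}R_-^{\frac{1}{2}}\|_{vol} \to 0$. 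Hence $T_n \to T$ in norm.

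It remains to show each $T_n$ is compact. Using the representation $H^{-\frac{1}{2}} = \frac{1}{2\sqrt{\pi}}\int_0^\infty e^{-tH}\,\frac{dt}{\sqrt{t}}$, set
\[
T_n^{\epsilon,N} := \frac{1}{2\sqrt{\pi}}\int_\epsilon^N \chi_{B_n} R_-^{\frac{1}{2}} e^{-tH}\,\frac{dt}{\sqrt{t}}.
\]
The same estimate as in $\textbf{Lemma \ref{lemma}}$, restricted to the time-intervals $(0,\epsilon)$ and $(N,\infty)$, shows $T_n^{\epsilon,N} \to T_n$ in norm as $\epsilon \to 0$ and $N \to \infty$, again by dominated convergence against $\|R_-^{\frac{1}{2}}\|_{vol}$. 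For each fixed $t \in [\epsilon,N]$, the operator $\chi_{B_n} R_-^{\frac{1}{2}} e^{-tH}$ is Hilbert--Schmidt: its kernel is bounded pointwise by $\chi_{B_n}(x)|R_-^{\frac{1}{2}}(x)|\,p_t^H(x,y)$, and combining the Gaussian upper bound (\ref{eq2}) with the integral estimate (\ref{equat3}) gives
\[
\int_{B_n}\!\int_M |R_-^{\frac{1}{2}}(x)|^2 |p_t^H(x,y)|^2\,d\mu(y)d\mu(x) \;\le\; C_t \int_{B_n}\frac{|R_-^{\frac{1}{2}}(x)|^2}{v(x,\sqrt{t})}\,d\mu(x) \;<\; \infty,
\]
since $R_-$ is smooth on the compact set $\overline{B_n}$ and $v(\cdot,\sqrt{t})$ is bounded away from $0$ there. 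Norm continuity of $t \mapsto \chi_{B_n} R_-^{\frac{1}{2}} e^{-tH}$ on $[\epsilon,N]$ then makes $T_n^{\epsilon,N}$ a norm-convergent Bochner integral of compact operators, hence compact. Taking successive norm limits, $T_n$ and then $T$ are compact, and so is $\Lambda = T^*T$.

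The main subtlety will be the two successive dominated-convergence arguments that drive both approximations: one must check that the $t$-integrand defining $\|\chi_{B_n^c}R_-^{\frac{1}{2}}\|_{vol}$ really does have $\|R_-^{\frac{1}{2}}/v(.,\sqrt t)^{1/r_i}\|_{r_i}$ as an integrable pointwise majorant, and similarly for the $(0,\epsilon)\cup(N,\infty)$ truncation. Once these are in place, the Hilbert--Schmidt computation is routine thanks to the Gaussian bound on $p_t^H$ (a consequence of $|e^{-tH}\omega|\le e^{-t\Delta}|\omega|$) and the smoothness of $R_-$ on compact sets.
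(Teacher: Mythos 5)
Your argument is correct, and its first half coincides with the paper's: the reduction of compactness of $\Lambda=T^*T$ to compactness of $T=R_-^{\frac{1}{2}}H^{-\frac{1}{2}}$, the spatial truncation $\chi_{B_n}R_-^{\frac{1}{2}}H^{-\frac{1}{2}}$, the use of \textbf{Lemma \ref{lemma}} with $R_-^{\frac{1}{2}}$ replaced by $\chi_{B_n^c}R_-^{\frac{1}{2}}$, and the two-fold dominated convergence giving norm convergence are exactly the paper's steps. You diverge in how you prove compactness of the localized operator. The paper dominates $\chi_{B(x,r)}R_-^{\frac{1}{2}}H^{-\frac{1}{2}}$ by $\phi H^{-\frac{1}{2}}$ for a cutoff $\phi\in\mathcal{C}_0^{\infty}(M)$ (using $R_-\in L^{\infty}_{loc}$) and concludes from $Im(H^{-\frac{1}{2}})=\mathcal{D}(\overrightarrow{\mathfrak{h}})\subseteq W^{1,2}(\tm)$ together with the Rellich--Kondrachov compact embedding $W^{1,2}(\Lambda^1T^*K)\hookrightarrow L^2$ on compact sets $K$; you instead truncate the subordination formula $H^{-\frac{1}{2}}=\frac{1}{2\sqrt{\pi}}\int_0^\infty e^{-tH}\frac{dt}{\sqrt{t}}$ in time and show each $\chi_{B_n}R_-^{\frac{1}{2}}e^{-tH}$ is Hilbert--Schmidt from the Gaussian bound (\ref{eq2}) and the integral estimate (\ref{equat3}). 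Both routes are valid: yours avoids the bundle-valued compact Sobolev embedding but needs the pointwise kernel bound (available under (\ref{G}) via domination by $e^{-t\Delta}$), an extra tail estimate in $t$ (which is fine, since it is just the vanishing tail of the convergent integral controlled in \textbf{Lemma \ref{lemma}}), and the norm continuity of $t\mapsto e^{-tH}$ on $[\epsilon,N]$, which holds because the self-adjoint semigroup is analytic, so $\|He^{-tH}\|_{2-2}\le C/t$; the paper's route is shorter once the compact embedding is granted and makes no quantitative use of the heat kernel at this stage. Your unproved side remarks are also easily checked: $v(\cdot,\sqrt{t})$ is indeed bounded below on $B_n$ by the doubling property (compare the ball-inclusion argument giving (\ref{equat1})), and boundedness of $R_-$ on $\overline{B_n}$ is the same local boundedness the paper invokes.
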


\begin{proof}
It follows from the same proof as in $\textbf{Lemma\,\ref{lemma}}$, applied to $\chi_{B(x,r)^C}R_-^{\frac{1}{2}}$ rather than $R_-^{\frac{1}{2}}$, that we have for all $x\in M$ and $r\ge 0$
\begin{equation*}
\|\chi_{B(x,r)^C}R_-^{\frac{1}{2}}H^{-\frac{1}{2}}\|_{2-2}\le C \|\chi_{B(x,r)^C}R_-^{\frac{1}{2}}\|_{vol},
\end{equation*}

\noindent where $B(x,r)^C$ denotes $M\setminus B(x,r)$. In addition the dominated convergence theorem applied twice ensures that for all $x\in M$ 
\begin{equation*}
\lim_{r\rightarrow +\infty}\|\chi_{B(x,r)^C}R_-^{\frac{1}{2}}\|_{vol}=0.
\end{equation*}

\noindent Therefore we deduce that 
\begin{equation*}
\lim_{r\rightarrow +\infty}\chi_{B(x,r)}R_-^{\frac{1}{2}}H^{-\frac{1}{2}}=R_-^{\frac{1}{2}}H^{-\frac{1}{2}},
\end{equation*}

\noindent where the limit is the operator limit in $\mathcal{L}(L^2(\tm))$. 

\noindent We recall that the operator limit in the uniform sense of compact operators is compact. Then to prove the lemma, it suffices to show that the operator $\chi_{B(x,r)}R_-^{\frac{1}{2}}H^{-\frac{1}{2}}$ is compact on $L^2(\tm)$ for all $x\in M$ and $r\ge 0$. Since $R_-$ is continuous on $M$, $R_-\in L^{\infty}_{loc}(M)$ and then there exists $\phi\in\mathcal{C}_0^{\infty}(M)$ such that $\phi=1$ on $B(x,r)$, $\phi\le 1$ on $B(x,r)^C$ and 
\begin{equation*}
\|\chi_{B(x,r)}R_-^{\frac{1}{2}}H^{-\frac{1}{2}}\oo\|_2\le C\|\phi H^{-\frac{1}{2}}\oo\|_2, \forall\oo\in L^2(\tm),
\end{equation*}

\noindent where $C=\underset{x\in supp(\phi)}{\\max}\|R_-^{\frac{1}{2}}(x)\|$. It suffices then to prove that the operator $\phi H^{-\frac{1}{2}}$ is compact on $L^2(\tm)$. We recall that we have a compact embedding between the Sobolev space $W^{1,2}(\Lambda^1 T^*K)$ and the space $L^2(\tm)$ for all compact subsets $K$ of $M$ (see \cite{rosenberg} p.24, 27, 34). Since $\phi$ has compact support and $Im(H^{-\frac{1}{2}})=\mathcal{D}(\overrightarrow{\mathfrak{h}})\subseteq W^{1,2}(\tm)$, we deduce that the operator $\phi H^{-\frac{1}{2}}$ is compact on $L^2(\tm)$.

\noindent We conclude that $\Lambda=(R_-^{\frac{1}{2}}H^{-\frac{1}{2}})^*(R_-^{\frac{1}{2}}H^{-\frac{1}{2}})$ is compact on $L^2(\tm)$.
\end{proof}

\noindent We are now able to end the proof of $\textbf{Theorem\,\ref{noyaunul}}$.

\begin{proof}[Proof of Theorem \ref{noyaunul}] First we notice that $\ddd$ being positive on $L^2(\tm)$, we have for all $\oo\in \mathcal{D}(\overrightarrow{\mathfrak{h}})$
\begin{equation*}
(R_-\oo,\oo)\le(H\oo,\oo).
\end{equation*}
 
\noindent Then for all $\oo\in L^2(\tm)$
\begin{equation*}
(\Lambda\oo,\oo)=(H^{-\frac{1}{2}}R_-H^{-\frac{1}{2}}\oo,\oo)\le \|\oo\|_2^2.
\end{equation*}

\noindent Hence 
\begin{equation}\label{equa16}
\|\Lambda\|_{2-2}\le 1. 
\end{equation}

\noindent According to the self-adjointness and the positivity of $\Lambda$, we have 
\begin{equation}\label{equa17}
\|\Lambda\|_{2-2}=\max\{\lambda; \lambda\,\text{eigenvalue of}\, \Lambda\}. 
\end{equation}

\noindent Furthermore, $\textbf{Lemma\,\ref{compact}}$ and the Fredholm alternative imply
\begin{equation}\label{equa18}
1\,\text{is an eigenvalue of }\Lambda \iff Ker_{L^2}(I-\Lambda)\neq\{0\},
\end{equation}

\noindent whereas $\textbf{Lemma\,\ref{isomorphism}}$ ensures that 
\begin{equation}\label{equa19}
Ker_{\mathcal{D}(\overrightarrow{\mathfrak{h}})}(\ddd)=\{0\}\iff Ker_{L^2}(I-\Lambda)=\{0\}.
\end{equation}

\noindent Therefore we deduce from (\ref{equa16}), (\ref{equa17}), (\ref{equa18}) and (\ref{equa19}) that 
\begin{equation*}
Ker_{\mathcal{D}(\overrightarrow{\mathfrak{h}})}(\ddd)=\{0\}\iff \|\Lambda\|_{2-2}<1.
\end{equation*}

\noindent Since $\Lambda$ is self-adjoint on $L^2(\tm)$, note that
\begin{equation*}
R_-\text{ is }\epsilon\text{-sub-critical}\iff \exists \;0\le\epsilon<1, \|\Lambda\|_{2-2}\le\epsilon. 
\end{equation*}

\noindent The result follows.

\end{proof}

\noindent The following results aim at removing the assumption $Ker_{\mathcal{D}(\overrightarrow{\mathfrak{h}})}(\ddd)=\{0\}$. However we need to strengthen the assumption on $\|R_-^{\frac{1}{2}}\|_{vol}$. We start with a proposition.

\begin{prop}\label{domaines}
Assume that the manifold $M$ satisfies $(\ref{D})$, $(\ref{G})$ and $\|R_-^{\frac{1}{2}}\|_{vol}<\infty$. Then there exists a non-negative constant $C$  depending on the constants appearing in (\ref{D}) and (\ref{G}) such that for any $\oo\in\mathcal{D}(\overrightarrow{\mathfrak{h}})$ 
\begin{equation*}
(R_-\oo,\oo)\le C \|R_-^{\frac{1}{2}}\|_{vol}^2\overrightarrow{\mathfrak{h}}(\oo,\oo)=C\|R_-^{\frac{1}{2}}\|_{vol}^2(H\oo,\oo).
\end{equation*}
\end{prop}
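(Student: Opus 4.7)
The plan is to reduce the estimate to the operator norm bound already proved in \textbf{Lemma\;\ref{lemma}}. Recall from that lemma that $\|R_-^{\frac{1}{2}}H^{-\frac{1}{2}}\|_{2-2}\le C\|R_-^{\frac{1}{2}}\|_{vol}$ under the hypotheses $(\ref{D})$, $(\ref{G})$, and $\|R_-^{\frac{1}{2}}\|_{vol}<\infty$. Since $\mathcal{D}(\overrightarrow{\mathfrak{h}})=\mathcal{D}(H^{\frac{1}{2}})$ and $\overrightarrow{\mathfrak{h}}(\oo,\oo)=\|H^{\frac{1}{2}}\oo\|_2^2$ for $\oo\in\mathcal{D}(\overrightarrow{\mathfrak{h}})$, the identity we want is essentially an application of this boundedness to $H^{\frac{1}{2}}\oo\in L^2(\tm)$.

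First, I would work on $\mathcal{C}_0^{\infty}(\tm)$. For $\oo\in\mathcal{C}_0^{\infty}(\tm)$, the element $H^{\frac{1}{2}}\oo$ belongs to $L^2(\tm)$ and, by the spectral theorem together with $Ker(H)=\{0\}$ (see \textbf{Lemma\;\ref{lemma30}}), we have $\oo=H^{-\frac{1}{2}}(H^{\frac{1}{2}}\oo)$. Therefore
\begin{equation*}
R_-^{\frac{1}{2}}\oo=\bigl(R_-^{\frac{1}{2}}H^{-\frac{1}{2}}\bigr)\,H^{\frac{1}{2}}\oo,
\end{equation*}
and the bound of \textbf{Lemma\;\ref{lemma}} yields
\begin{equation*}
\|R_-^{\frac{1}{2}}\oo\|_2^2\le \|R_-^{\frac{1}{2}}H^{-\frac{1}{2}}\|_{2-2}^2\|H^{\frac{1}{2}}\oo\|_2^2\le C\|R_-^{\frac{1}{2}}\|_{vol}^2\,(H\oo,\oo).
\end{equation*}
Noting that $(R_-\oo,\oo)=\|R_-^{\frac{1}{2}}\oo\|_2^2$, this gives the desired inequality for test forms.

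Next, I would extend the estimate from $\mathcal{C}_0^{\infty}(\tm)$ to $\mathcal{D}(\overrightarrow{\mathfrak{h}})$ by density. Given $\oo\in\mathcal{D}(\overrightarrow{\mathfrak{h}})$, choose $(\oo_n)_n\subset\mathcal{C}_0^{\infty}(\tm)$ converging to $\oo$ in the form norm $\|.\|_{\overrightarrow{\mathfrak{h}}}$. In particular $H^{\frac{1}{2}}\oo_n\to H^{\frac{1}{2}}\oo$ in $L^2(\tm)$, so $(R_-^{\frac{1}{2}}H^{-\frac{1}{2}})H^{\frac{1}{2}}\oo_n\to (R_-^{\frac{1}{2}}H^{-\frac{1}{2}})H^{\frac{1}{2}}\oo$ in $L^2(\tm)$ by boundedness of the operator. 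Combined with the inequality established for $\oo_n$, this shows that $R_-^{\frac{1}{2}}\oo$ makes sense in $L^2$ as the common limit, with $\|R_-^{\frac{1}{2}}\oo\|_2^2\le C\|R_-^{\frac{1}{2}}\|_{vol}^2\overrightarrow{\mathfrak{h}}(\oo,\oo)$. Finally, $\overrightarrow{\mathfrak{h}}(\oo,\oo)=(H\oo,\oo)$ (interpreted via the form, or on $\mathcal{D}(H)$) concludes the proof.

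The main obstacle, which is modest, is the bookkeeping of domains: one must check that $R_-^{\frac{1}{2}}\oo$ is a well-defined $L^2$ object for every $\oo\in\mathcal{D}(\overrightarrow{\mathfrak{h}})$, not just for smooth compactly supported forms. This is handled precisely by the boundedness from \textbf{Lemma\;\ref{lemma}}, which allows the composition $(R_-^{\frac{1}{2}}H^{-\frac{1}{2}})\circ H^{\frac{1}{2}}$ to be defined on all of $\mathcal{D}(H^{\frac{1}{2}})=\mathcal{D}(\overrightarrow{\mathfrak{h}})$ and to agree with the pointwise multiplication by $R_-^{\frac{1}{2}}$ on the dense subspace $\mathcal{C}_0^{\infty}(\tm)$.
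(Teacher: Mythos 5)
Your proposal is correct and follows essentially the same route as the paper: writing $R_-^{\frac{1}{2}}\oo=(R_-^{\frac{1}{2}}H^{-\frac{1}{2}})H^{\frac{1}{2}}\oo$ and invoking \textbf{Lemma~\ref{lemma}} to bound $(R_-\oo,\oo)=\|R_-^{\frac{1}{2}}\oo\|_2^2$ by $C\|R_-^{\frac{1}{2}}\|_{vol}^2\|H^{\frac{1}{2}}\oo\|_2^2$. The only difference is that you spell out the density argument and the injectivity of $H^{\frac{1}{2}}$, which the paper leaves implicit; this is a harmless (indeed careful) addition, not a different method.
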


\begin{proof}
We have 
\begin{equation*}
(R_-\oo,\oo)=\|R_-^{\frac{1}{2}}\oo\|_2^2=\|R_-^{\frac{1}{2}}H^{-\frac{1}{2}}H^{\frac{1}{2}}\oo\|_2^2\le \|R_-^{\frac{1}{2}}H^{-\frac{1}{2}}\|_{2-2}^2\|H^{\frac{1}{2}}\oo\|_2^2.
\end{equation*}

\noindent Using $\textbf{Lemma\,\ref{lemma}}$, we obtain the desired result.
\end{proof}

\noindent An immediate consequence of $\textbf{Proposition\;\ref{domaines}}$ is the following.

\begin{prop}\label{thelastprop}
Suppose that the assumptions $(\ref{D})$ and $(\ref{G})$ are satisfied and that $\|R_-^{\frac{1}{2}}\|_{vol}$ is small enough. Then $R_-$ satisfies $($\ref{SC}$)$.
\end{prop}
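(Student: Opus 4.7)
The plan is to read this off directly from Proposition \ref{domaines}, which already does all the real work. That proposition provides a constant $C>0$, depending only on the constants in \eqref{D} and \eqref{G}, such that
\[
(R_-\omega,\omega)\;\le\;C\,\|R_-^{\frac{1}{2}}\|_{vol}^{\,2}\,(H\omega,\omega),\qquad\forall\,\omega\in\mathcal{D}(\overrightarrow{\mathfrak{h}}).
\]
So the idea is simply to choose $\|R_-^{\frac{1}{2}}\|_{vol}$ small enough that the constant appearing on the right-hand side is strictly less than $1$, and then set $\epsilon$ equal to this constant.

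More precisely, first I would note that the lower bound $0\le(R_-\omega,\omega)$ in \eqref{SC} is automatic: by construction $R_-(x)$ is the positive part of $-\mathrm{Ric}(x)$, hence is a non-negative symmetric endomorphism of $T_x^*M$ at each point, so $(R_-\omega,\omega)=\int_M\langle R_-(x)\omega(x),\omega(x)\rangle_x\,d\mu\ge 0$ for every $\omega\in\mathcal{C}_0^{\infty}(\tm)$. Next, applying Proposition \ref{domaines} to any $\omega\in\mathcal{C}_0^{\infty}(\tm)\subset\mathcal{D}(\overrightarrow{\mathfrak{h}})$, I would set
\[
\epsilon := C\,\|R_-^{\frac{1}{2}}\|_{vol}^{\,2},
\]
with the same constant $C$ as in Proposition \ref{domaines}. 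Under the hypothesis that $\|R_-^{\frac{1}{2}}\|_{vol}$ is small enough, namely $\|R_-^{\frac{1}{2}}\|_{vol}<1/\sqrt{C}$, we obtain $\epsilon\in[0,1)$, and the inequality from Proposition \ref{domaines} reads exactly
\[
0\le(R_-\omega,\omega)\le\epsilon\,(H\omega,\omega),\qquad\forall\,\omega\in\mathcal{C}_0^{\infty}(\tm),
\]
which is precisely the condition \eqref{SC}.

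There is essentially no obstacle here: the proposition is an immediate corollary of Proposition \ref{domaines}, and the whole content was already packaged in the operator-norm bound $\|R_-^{\frac{1}{2}}H^{-\frac{1}{2}}\|_{2-2}\le C\|R_-^{\frac{1}{2}}\|_{vol}$ proved in Lemma \ref{lemma}. The only (minor) point worth making explicit in the write-up is the quantitative threshold $\|R_-^{\frac{1}{2}}\|_{vol}<1/\sqrt{C}$, so that the reader sees in what sense ``small enough'' is meant and how $\epsilon$ depends on $\|R_-^{\frac{1}{2}}\|_{vol}$ and on the constants in \eqref{D} and \eqref{G}; this dependence is exactly the one announced in the introduction.
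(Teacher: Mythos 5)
Your proof is correct and is exactly the paper's argument: the proposition is stated there as an immediate consequence of Proposition \ref{domaines}, with $\epsilon=C\|R_-^{\frac{1}{2}}\|_{vol}^{2}<1$ once $\|R_-^{\frac{1}{2}}\|_{vol}$ is small enough. Making the threshold $\|R_-^{\frac{1}{2}}\|_{vol}<1/\sqrt{C}$ and the pointwise non-negativity of $R_-$ explicit is a harmless (and welcome) addition.
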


\noindent In the particular case of polynomial volume growth, we then ask $\|R_-\|_{\frac{N}{2}-\eta}$ and $\|R_-\|_{\frac{N}{2}+\eta}$ to be small enough for some $\eta>0$ to have $R_-$ satisfying (\ref{SC}). Note that if $M$ satisfies a global Sobolev inequality, it is easy to prove that $R_-$ satisfies $($\ref{SC}$)$ if $\|R_-\|_{\frac{N}{2}}$ is small enough (without any assumption on the volume growth).

\noindent Note also that we recover $Ker_{\mathcal{D}(\overrightarrow{\mathfrak{h}})}(\ddd)=\{0\}$ with the assumptions of $\textbf{Proposition\;\ref{thelastprop}}$ but we did not need to assume it to prove subcriticality.

\section{Aknowledgements}

\noindent I wish to thank particularly my PhD supervisor El Maati Ouhabaz. Without his precious advices, his knowledge of the subject and his legendary patience, this article would not have ever existed.

\noindent I want to thank Laurent Bessi\`eres for his help and advices on the geometric parts of this article.

\noindent I would like to thank Thierry Coulhon and Baptiste Devyver for their remarks and their interest on this paper.

\noindent This research is partly supported by the ANR project "Harmonic Analysis at its Boundaries", ANR-12-BS01-0013-02.


\bigskip

\noindent Jocelyn Magniez

\noindent Institut de Mathématiques de Bordeaux (IMB), Université de Bordeaux.

\noindent 351 cours de la Libération

\noindent 33405 Talence cedex, France

\noindent Email : jocelyn.magniez@math.u-bordeaux1.fr

\end{document}